\theoremstyle{thmstyleone}
\newtheorem{theorem}{Theorem}[section]
\newtheorem{proposition}[theorem]{Proposition}
\newtheorem{corollary}[theorem]{Corollary}
\theoremstyle{definition}
\newtheorem{definition}[theorem]{Definition}
\newtheorem{assumption}[theorem]{Assumption}
\theoremstyle{remark}
\newtheorem{remark}[theorem]{Remark}
\begin{document}

\title{Chemotactic Feedback Controls Patterning in Hybrid Tumor–Stroma Model}

\author[1]{\fnm{Jiguang} \sur{Yu}}\email{jyu678@bu.edu}
\equalcont{These authors contributed equally to this work as co-first authors.}

\author*[2]{\fnm{Louis Shuo} \sur{Wang}}\email{swang116@vols.utk.edu}
\equalcont{These authors contributed equally to this work as co-first authors.}

\author[3]{\fnm{Zonghao} \sur{Liu}} \email{liuzonghao@fjmu.edu.cn}

\author*[3]{\fnm{Jingfeng} \sur{Liu}}\email{drjingfeng@126.com}

\affil[1]{\orgdiv{College of Engineering},
  \orgname{Boston University},
  \orgaddress{\city{Boston}, \postcode{02215}, \state{MA}, \country{United States}}}

\affil[2]{\orgdiv{Department of Mathematics},
  \orgname{University of Tennessee},
  \orgaddress{\city{Knoxville}, \postcode{37996}, \state{TN}, \country{United States}}}

\affil[3]{%
  \orgdiv{Department of Hepatopancreatobiliary Surgery},
  \orgname{Fujian Cancer Hospital},
  \orgaddress{\city{Fuzhou}, \postcode{350014}, \state{Fujian},\country{China}}}

\abstract{
Motivated by an ongoing collaboration with clinical oncologists and pathologists, we develop a hybrid partial differential equation--ordinary differential equation (PDE--ODE) framework that captures (i) competition between susceptible and resistant phenotypes, (ii) stromal state switching, and (iii) a clinically realistic open-loop, single-dose therapeutic agent $I$ with diffusion and clearance.

Clinical management of solid tumors is increasingly limited by spatial heterogeneity and therapy-induced resistance niches that are difficult to predict from well-mixed models.  We establish a rigorous mathematical backbone with forward invariance of the nonnegative cone and global-in-time well-posedness. Exploiting the decoupled drug equation $\partial_t I=d_I\Delta I-\gamma_I I$, we prove a long-time reduction during washout and show that the damped base dynamics admit no diffusion-driven (Turing-type) instability. We then formulate a directionality--damping principle: unidirectional (open-loop) sensing yields at most transient focusing, whereas bidirectional (closed-loop) feedback reshapes the effective mobility and produces explicit thresholds separating stable homogeneity, finite-band patterning (resistance niche formation), and aggregation when strong parabolicity is violated. Reproducible simulations corroborate this classification and highlight when flux regularization is required for physical realism.
}

\keywords{Reaction--diffusion equation; tumor microenvironment; pharmacokinetics; open-loop therapy; chemotaxis; Turing instability; strong parabolicity; resistance niches.}

\maketitle

\section{Introduction}

\subsection{Background and motivation}
\label{subsec:background}

\begin{figure}[t]
\centering
\resizebox{1.1\textwidth}{!}{%
\begin{tikzpicture}[
    font=\footnotesize,
    >=Latex,
    node distance=0.8cm and 0.5cm, 
    box/.style={
        rounded corners=2mm,
        draw,
        very thick,
        align=center,
        inner sep=6pt,
        fill=white
    },
    subbox/.style={
        rounded corners=1.5mm,
        draw,
        thick,
        align=left,
        inner sep=6pt,
        fill=white,
        anchor=north west 
    },
    pill/.style={
        rounded corners=5mm,
        draw,
        thick,
        align=center,
        inner xsep=8pt,
        inner ysep=4pt,
        fill=white
    },
    arr/.style={->, thick},
    garr/.style={->, thick, dotted},
    container/.style={
        draw,
        very thick,
        rounded corners=4mm,
        inner sep=12pt
    }
]

\node[box, text width=5.8cm] (tumor) {
    \textbf{Tumor layer (PDE)}\\[3pt]
    $\partial_t S = d_S\Delta S + \lambda_S S(1-\tfrac{S+R}{K}) - \alpha S - \delta(I)S + \xi[1-\phi(I)]R$\\[3pt]
    $\partial_t R = d_R\Delta R + \lambda_R R(1-\tfrac{S+R}{K}) + \alpha S + \eta\,\phi(I)F_aR - \xi[1-\phi(I)]R$
};

\node[subbox, above=1cm of tumor, anchor=center] (switchSR) {
    $S \xrightleftharpoons[\xi(1-\phi(I))]{\alpha} R$
};
\draw[garr] (switchSR.south) -- (tumor.north);

\node[box, right=1.2cm of tumor, text width=3.2cm] (drug) {
    \textbf{Drug (PDE)}\\[3pt]
    $\partial_t I = d_I\Delta I - \gamma_I I$\\[3pt]
    $\delta(I)=\delta_0 \frac{I}{I+K_I}$\\[2pt]
    $\phi(I)\in(0,1)$\\[2pt]
    $\phi(0)\approx0,\ \phi(\infty)\approx1$
};

\node[box, right=0.8cm of drug, text width=4.8cm] (stroma) {
    \textbf{Stroma switch (ODE)}\\[3pt]
    $\partial_t P = -\theta\phi(I)P+\beta[1-\phi(I)]F_a$\\[2pt]
    $\partial_t F_a =\theta\phi(I)P-\beta[1-\phi(I)]F_a$\\[4pt]
    $\boxed{\partial_t(P+F_a) = 0 \Rightarrow P{+}F_a{=}P_T}$
};

\node[subbox, below=0.8cm of tumor, text width=5.8cm, align=center, anchor=north] (damp) {
    \textbf{Damping backbone}\\
    logistic crowding $\Rightarrow$ dissipative kinetics\\
    drug washout $\|I(t)\|_\infty\le e^{-\gamma_I t}\|I_0\|_\infty$
};

\node[pill, below=0.8cm of drug] (hub) {
    \textbf{Signaling hub}\\
    $I\Rightarrow(\delta,\phi)$
};

\draw[arr] (drug.west) -- node[above, font=\scriptsize] {$-\delta(I)S$} (tumor.east);
\draw[arr] (drug.south) -- (hub.north);
\draw[arr] (hub.east) -- node[above, sloped, font=\scriptsize] {$\phi(I)$} (stroma.south west);

\draw[arr] (stroma.north west) to[bend right=25] coordinate[midway] (curveApex) node[midway, above, font=\scriptsize] {$\eta\,\phi(I)F_aR$} (tumor.north east);

\draw[arr] (tumor.south) -- (damp.north);
\draw[arr] (drug.south west) to[out=240, in=60] (damp.north east);

\node[container, fit=(tumor)(drug)(stroma)(switchSR)(damp)(curveApex)] (topFrame) {};

\node[font=\bfseries, anchor=north, yshift=-6pt] at (topFrame.north) {Hybrid PDE--ODE coupling topology};

\node[below=1.5cm of topFrame.south west, anchor=west, font=\bfseries] (principleTitle) {Directionality--Damping Principle (mechanism map)};

\node[subbox, below=0.3cm of principleTitle, text width=5.0cm] (col1_top) {
    \textbf{Baseline (no chemotaxis)}\\
    $I(t)\to0 \Rightarrow \delta,\phi\to0$\\
    $\Rightarrow$ damped $(S,R)$ RD\\
    $\Rightarrow$ \textbf{no Neumann Turing}
};

\node[subbox, right=0.5cm of col1_top, text width=5.0cm] (col2_top) {
    \textbf{Unidirectional sensing (open-loop)}\\
    flux $-\nabla\!\cdot(\chi_W W\nabla c)$\\
    signal relaxes $\partial_c\mathcal Q<0$\\
    damping + relaxation $\Rightarrow$ stable
};

\node[subbox, right=0.5cm of col2_top, text width=5.2cm] (col3_top) {
    \textbf{Bidirectional feedback (closed-loop)}\\
    signal $g=g(S,R)$ creates gradients\\
    modifies transport operator\\
    $\mathcal A(\mu)=\mathcal J_F-\mu M$
};

\node[subbox, below=2.6cm of col1_top.north west, text width=5.0cm, anchor=north west] (col1_bot) {
    \textbf{Outcome: Homogenization}\\
    stable $\mathbb K^*$ \\spatial modes decay
};

\node[subbox, right=0.5cm of col1_bot, text width=5.0cm] (col2_bot) {
    \textbf{Outcome: Transient focusing}\\
    short-lived aggregation $\Rightarrow$ decay\\
    $\Rightarrow$ \textbf{cannot generate patterns}
};

\node[subbox, right=0.5cm of col2_bot, text width=5.2cm] (col3_bot) {
    \textbf{Three regimes}\\
    (I) $\mathrm{sym}(M)\succ0$ stable\\
    (II) finite unstable band $\Rightarrow$ \textbf{Turing}\\
    (III) $\mathrm{sym}(M)\not\succ0$ $\Rightarrow$ aggregation
};

\node[container, fit=(principleTitle)(col1_bot)(col3_bot)(col3_top)] (botFrame) {};

\end{tikzpicture}
}
\caption{\textbf{Unified mechanism diagram.} Top: hybrid PDE--ODE coupling topology for tumor phenotypes $(S,R)$, drug $I$, and stromal switching $(P,F_a)$ under Neumann boundary conditions. Bottom: the directionality--damping principle: (i) drug washout and logistic damping enforce baseline homogenization (no Turing); (ii) unidirectional open-loop sensing yields at most transient focusing; (iii) bidirectional feedback reshapes the effective mobility $M$ and yields stable, finite-band (Turing-type) niche formation, or aggregation/ill-posedness depending on strong parabolicity.}
\label{fig:unified_mechanism}
\end{figure}
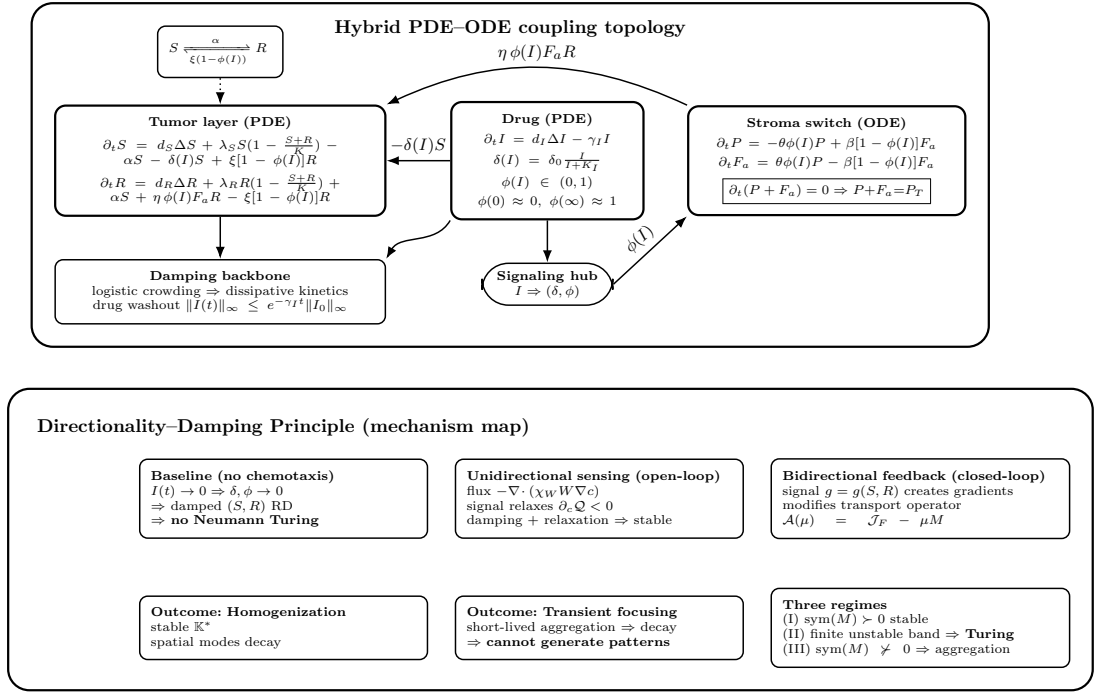

Self-organized spatial heterogeneity is observed across physical, chemical, and biological systems, where it often emerges from the interplay between local reaction kinetics, spatial transport, and environmental signaling \citep{cross_pattern_1993,dolnik_resonant_2001,murray_mathematical_2003,colizza_reactiondiffusion_2007,belik_natural_2011}. 
A central goal in the nonlinear analysis of partial differential equations is to identify which structural mechanisms can generate spatial patterns from homogeneous equilibria, and which mechanisms instead enforce relaxation toward homogeneity. 
In this paper we focus on a mechanism-level question motivated by tumor microenvironment dynamics: under strong damping (crowding/competition and signal relaxation), which coupling topologies can nevertheless amplify spatial perturbations into persistent heterogeneity, and which are intrinsically incapable of doing so?

The classical patterning paradigm is formulated as reaction--diffusion equations \citep{cherniha_nonlinear_2017,kondo_reaction-diffusion_2010},
\begin{align*}
    \begin{cases}
    \partial_{t}X = d_{1}\Delta X + a(X-h)+b(Y-k), \\ 
    \partial_{t}Y = d_{2}\Delta Y + c(X-h)+d(Y-k),
    \end{cases}
\end{align*}
and, as initiated by Turing \citep{turing_chemical_1952}, shows that diffusion may destabilize a stable homogeneous steady state and induce a diffusion-driven bifurcation to patterned states. 
This mechanism has been extensively studied, and rigorous theories of existence, stability, and bifurcation for nonlinear diffusion systems are now classical \citep{chen_non-linear_2019,henry_geometric_1981,smoller_shock_1994}. 

Although classical Turing mechanisms generate spatial structures through diffusion-driven instabilities without requiring directed cell movement, pattern initiation in many biological systems is an active process in which cells sense, respond to, and amplify nascent spatial cues; the resulting directed migration then contributes directly to subsequent morphogenesis. 
Cell movement is therefore central to examples such as aggregation of the social amoeba \textit{Dictyostelium discoideum} \citep{hofer_dictyostelium_1995}, stripe formation in fish pigmentation \citep{painter_stripe_1999}, gastrulation and limb morphogenesis in the chick embryo \citep{li_cell_1999,yang_cell_2002}, various bacterial aggregation phenomena \citep{budrene_complex_1991,budrene_dynamics_1995}, and primitive streak formation \citep{painter_chemotactic_2000}. 
In these cases the movements are typically elicited and guided by chemotactic responses to spatially varying signals, a process termed chemotaxis. 
The modeling of chemotaxis begins with the Keller--Segel formulation \citep{keller_initiation_1970,keller_model_1971,murray_mathematical_2002}, which incorporates directed motion along chemical gradients:
\begin{align}
\label{eq:Keller_Segel}
\begin{cases}
\partial_{t} n = f(n) - \nabla \cdot \left( n \chi(a) \nabla a \right) + \nabla \cdot D \nabla n, \\
\partial_{t} a = g(a,n) + \nabla \cdot D_{a} \nabla a.
\end{cases}
\end{align}
While diffusion tends to smooth perturbations, chemotaxis can destabilize homogeneous states in suitable regimes, leading to aggregation, spatial patterning, or blow-up phenomena \citep{horstmann_boundedness_2005,jin_critical_2026,horstmann_1970_2003,arumugam_keller-segel_2021,jager_explosions_1992}. 
Subsequent refinements enhance mechanistic realism \citep{martiel_model_1987,goldbeter_biochemical_2004,monk_cyclic_1989,spiro_model_1997}. 

Nonlinear diffusion extensions further incorporate density-dependent interactions, including $D(u)=\mu_{1}+\mu_{2}u_{m}^{4}/(u_{m}^{2}+u^{4})$ (critical density $u_{m}$) \citep{hofer_dictyostelium_1995}, $D(u)=u^{n}$ ($n>1$) \citep{kowalczyk_preventing_2005}, $D(u)=1/(1-u)^{\alpha}$ ($\alpha>0$) \citep{choi_prevention_2010}, and $D(u)=(1+u)/(1-u+u\ln{u})$ \citep{lushnikov_macroscopic_2008}. 
Wang \citep{wang_chemotaxis_2010} synthesized these advances by introducing:
\begin{align*}
    \begin{cases}
        \partial_{t}u = \nabla\cdot \bigl(D(u)\nabla u-\chi u\phi(u)\nabla v\bigr),\\ 
        \partial_{t}v = D_{v}\Delta v+\mu u-\delta v.
    \end{cases}
\end{align*}
The representative pairs 
\begin{align*}
    (D_{1}(u),\phi_{1}(u)) &= (d,1),\\[4pt] 
    (D_{2}(u),\phi_{2}(u)) &= (d,1-u), \\[4pt] 
    (D_{3}(u),\phi_{3}(u)) &= \bigl(d(1+(\gamma -1)u^{\gamma}),\,1-u^{\gamma}\bigr), \qquad (\gamma >1), \\[4pt] 
    (D_{4}(u),\phi_{4}(u)) &= \bigl(d(1-u)^{r-1}(1-u(1-r)),\,(1-u)^r\bigr), \quad (r>1),
\end{align*}
demonstrate that directed transport can interact with diffusion and kinetics to produce finite-wavelength instabilities, while in other regimes it can drive concentration aggregation and blow-up rather than smooth tissue-scale patterns \citep{bubba_chemotaxis-based_2019,wang_chemotaxis_2010}. 
\Cref{fig:mechanism_comparison} illustrates this distinction: the left panel depicts diffusion-driven (Turing-type) patterning, whereas the right panel depicts an aggregation-driven instability with spike formation.

\begin{figure}
    \centering
    \includegraphics[width=0.75\linewidth]{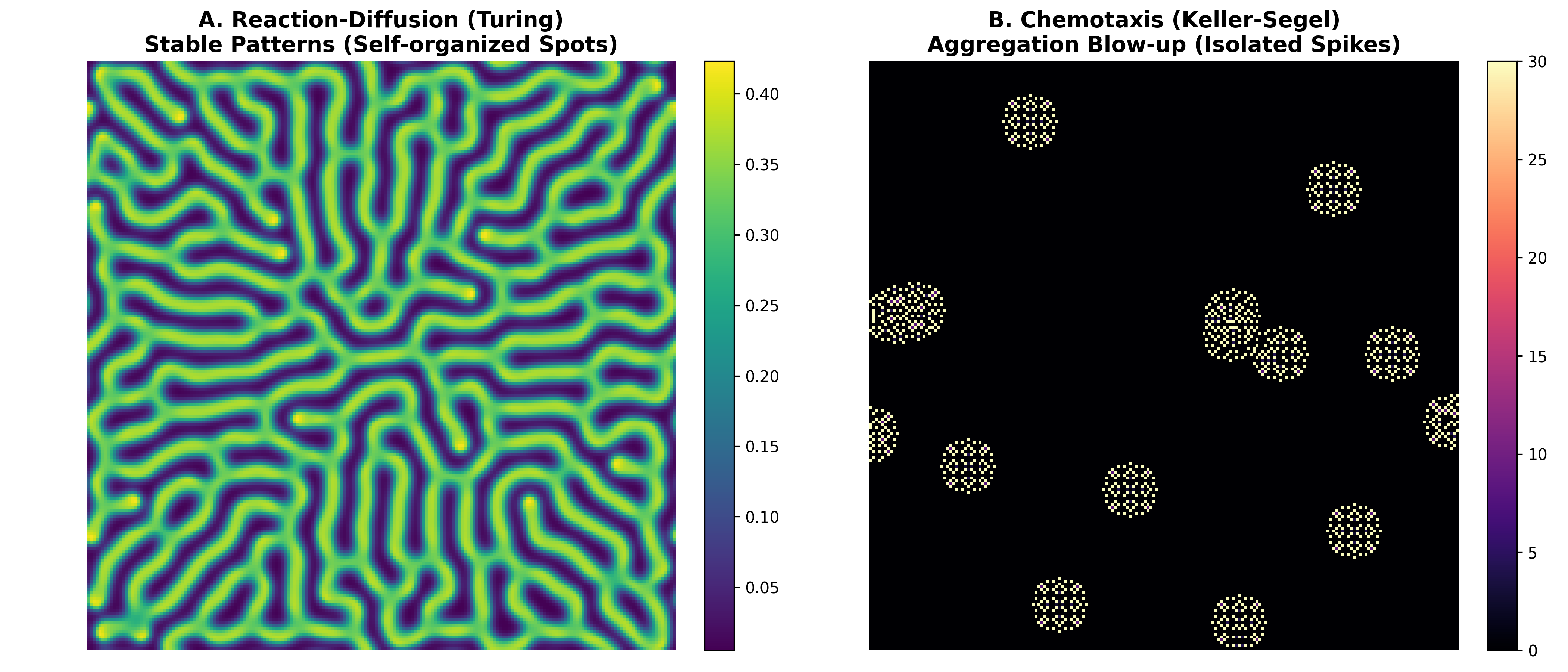}
    \caption{\textbf{Comparison of spatial instability mechanisms.} \textbf{(A)} Diffusion-driven instability (Turing mechanism): interaction between local reaction kinetics and differential diffusion generates stable, self-organized spatial patterns (e.g., stripes) from a homogeneous state. \textbf{(B)} Aggregation-driven instability (chemotaxis mechanism): directed transport leads to mass concentration and blow-up phenomena (spots) rather than smooth spatial structures. This paper investigates the transition between these regimes.}
    \label{fig:mechanism_comparison}
\end{figure}

In tumor microenvironment settings, spatial organization is shaped not only by diffusion and taxis, but also by heterogeneous signaling, competitive interactions, and state transitions of both tumor and stromal compartments \citep{najafi_tumor_2019,david_role_2025,zhang_tumor_2024,yuan_spatial_2016,wells_spatial_2015}. 
Chemotaxis-based and reaction--diffusion models have been used to study tumor growth and invasion \citep{bubba_chemotaxis-based_2019,watts_pdgf-aa_2016,puliafito_three-dimensional_2015}, yet many formulations treat microenvironmental components in a reduced or decoupled manner that obscures the mechanism by which spatial heterogeneity is created or eliminated \citep{oudin_physical_2016,esfahani_three-dimensional_2022}. 
In particular, from a rigorous pattern-formation perspective, the joint role of (i) competing tumor phenotypes, (ii) microenvironmental state transitions, and (iii) fast-relaxing exogenous inhibitory forcing has not been fully characterized. 
Hybrid multiscale oncology models provide a complementary route to connect tissue-scale transport fields with cell-level phenotypic dynamics and resistance evolution \citep{wang_analysis_2025-1}. 
These developments support the broader modeling strategy pursued here: identifying mechanism-level organizing principles in a tractable hybrid continuum framework, while clarifying how additional stochastic or agent-based structure could be incorporated in a mathematically controlled manner.

\subsection{Model and biological interpretation}
\label{subsec:model_bio}

The above research gap motivates the present work. 
We develop a mathematically tractable hybrid PDE--ODE framework that couples competition between two tumor phenotypes to stromal state transitions, and includes an open-loop, single-dose therapeutic agent $I$ that diffuses in space and is cleared in time. 
\Cref{fig:unified_mechanism} illustrates the coupling topology considered here. 
Our goal is mechanism-driven: under strong damping (crowding/competition and signal relaxation), when can directed transport generate spatial heterogeneity from a homogeneous equilibrium, and when is it structurally incapable of inducing diffusion-driven (Turing-type) bifurcation?

A closely related issue is physical realism. 
When feedback is sufficiently strong to overwhelm diffusive smoothing, unregularized continuum descriptions may predict aggregation/collapse rather than smooth, finite-wavelength structures, indicating the need for biologically motivated regularization (e.g.\ volume filling \citep{wrzosek_volume_2010} or flux saturation \citep{chertock_chemotaxis_2012}). 
These themes guide both the modeling hierarchy and the analytical results developed in the remainder of the paper.

To study these questions in a form that supports rigorous analysis and transparent interpretation, we consider a hybrid reaction--diffusion PDE--ODE model on a bounded domain $U\subseteq\mathbb{R}^N$ ($N\in\{1,2,3\}$) with $C^\infty$ boundary. 
Throughout the paper, we impose homogeneous Neumann boundary conditions (zero flux) for the diffusing species,
\[
\partial_{\mathrm n}S=\partial_{\mathrm n}R=\partial_{\mathrm n}I=0
\qquad \text{on }\partial U,
\]
which represent a closed tissue region with no net exchange across the boundary. 
We assume nonnegative initial data, consistent with the interpretation of the state variables as densities and concentrations.

\medskip
\noindent\textbf{Base hybrid PDE--ODE system (no chemotaxis).}
We propose the following coupled system:
\begin{align}
\label{eq:system}
\begin{cases}
\partial_{t} S = d_{S}\Delta S + \lambda_{S} S \left(1 - \tfrac{S+R}{K}\right) - \alpha S - \delta(I)S + \xi \bigl[1 - \phi(I)\bigr]R, & x\in U,\ t>0, \\[4pt]
\partial_{t} R = d_{R}\Delta R + \lambda_{R} R \left(1 - \tfrac{S+R}{K}\right) + \alpha S + \eta \phi(I)F_aR - \xi \bigl[1 - \phi(I)\bigr]R, & x\in U,\ t>0, \\[4pt]
\partial_{t} I = d_{I}\Delta I - \gamma_{I}I, & x\in U,\ t>0, \\[4pt]
\partial_{t} P = -\theta \phi(I)P + \beta \bigl[1 - \phi(I)\bigr]F_a, & x \in U,\ t>0, \\[4pt]
\partial_{t} F_a = \theta \phi(I)P - \beta \bigl[1 - \phi(I)\bigr]F_a, & x \in U,\ t>0.
\end{cases}
\end{align}
The densities $S(x,t)$ and $R(x,t)$ represent two competing tumor phenotypes, interpreted as therapy-susceptible and therapy-resistant populations, respectively. 
Both $S$ and $R$ undergo logistic growth with intrinsic rates $\lambda_S,\lambda_R$ and a shared carrying capacity $K$, introducing strong damping through crowding and competition via the factor $1-(S+R)/K$.
The two phenotypes interconvert at rates $\alpha$ ($S\to R$) and $\xi$ ($R\to S$), representing reversible switching under environmental pressure.

\medskip
\noindent\textbf{Stromal switching variables.}
To ground the microenvironmental variables in a concrete setting, we interpret $(P,F_a)$ as two phenotypic states of the stromal fibroblast pool. 
Here $P(x,t)$ denotes the density of quiescent (inactive) fibroblasts, while $F_a(x,t)$ denotes the density of activated fibroblasts (e.g.\ cancer-associated fibroblasts \citep{sahai_framework_2020}). 
Activated fibroblasts modulate the local microenvironment via remodeling cues and growth-factor secretion. 
With this identification, the model couples three mechanisms central to tumor microenvironments: competition, signal-mediated regulation, and microenvironmental state transitions \citep{najafi_tumor_2019,david_role_2025,zhang_tumor_2024,yuan_spatial_2016,wells_spatial_2015}. 
The $(P,F_a)$ subsystem has the structure of a two-state phenotypic conversion model with drug-dependent rates,
\[
P \xrightleftharpoons[\ \beta(1-\phi(I))\ ]{\ \theta\phi(I)\ } F_a.
\]
In the hybrid regime, we treat the stromal conversion variables $(P,F_a)$ as effectively nonmotile on the pattern-formation time window and set
\begin{equation*}
    \partial_t P = -\theta\phi(I)P + \beta [1-\phi(I)]F_a,\qquad \partial_t F_a = \theta\phi(I)P-\beta [1-\phi(I)]F_a,\qquad x\in U, \ t>0,
\end{equation*}
so that the $P$- and $F_a$-equations become pointwise ODEs coupled to the PDE components $(S,R,I)$. 
The conversion structures imply the pointwise conservation law $\partial_t(P+F_a)=0$ (see Proposition~\ref{prop:IPA_bounds}). 
We interpret this as a short-timescale approximation: switching between quiescent and activated states occurs on a timescale faster than fibroblast proliferation or apoptosis, so that on the migration and pattern-formation window the total stromal density acts as a conserved local pool.

For longer timescales, stromal recruitment and turnover can be incorporated via small source--sink terms, e.g.
\begin{align}
\label{eq:source_sink}
\partial_t P = \dots + s_P - \mu_P P,
\qquad
\partial_t F_a = \dots + s_{F_a} - \mu_{F_a} F_a,
\end{align}
with $s_{P,F_a}\ge0$ and $\mu_{P,F_a}\ge0$. 
Our analysis focuses on the conservative switching limit, which provides the cleanest mechanism-level setting. 
Including moderate source--sink terms primarily shifts homogeneous equilibrium levels and is not expected to change the qualitative directionality--damping classification, provided the system remains in a well-posed parabolic regime.

\medskip
\noindent\textbf{Drug pharmacokinetics and signaling.}
The field $I(x,t)$ denotes the concentration of an exogenously administered therapeutic agent (single-dose chemotherapy drug). 
It diffuses through tissue and is cleared at rate $\gamma_I$, so that after dosing the drug relaxes exponentially. 
Repeated dosing or sustained delivery can be incorporated by adding an input term, which we do not pursue here.
The drug $I$ enters the tumor layer through an inhibitory term acting on $S$, modeled by Michaelis--Menten kinetics \citep{cornish-bowden_one_2015},
\[
\delta(I)=\delta_0\,\frac{I}{I+K_I},
\]
with maximal inhibition rate $\delta_0$ and half-saturation concentration $K_I$.
The same drug field controls stromal activation through a switch $\phi$.
We assume $\phi: \mathbb{R}_+\to (0,1)$ is smooth, bounded, and globally Lipschitz continuous. 
Additionally, we require $\phi(I)\approx 0$ for small $I$ and $\phi(I)\approx 1$ for large $I$. 
A representative choice is
\[
\phi(I)=\tanh(\kappa_{\phi} I),
\qquad \kappa_{\phi}>0,
\]
where $\kappa_{\phi}$ controls the steepness of the drug-dependent switch. 
Activated fibroblasts promote the resistant phenotype through the coupling term $\eta\,\phi(I)\,F_a\,R$, encoding the idea that drug-driven stromal activation can transiently create a niche that favors resistance \citep{milne_role_2025,meads_environment-mediated_2009}.
\Cref{fig:model_topology} summarizes the resulting interaction topology.

\begin{figure}
    \centering
    \includegraphics[width=0.75\linewidth]{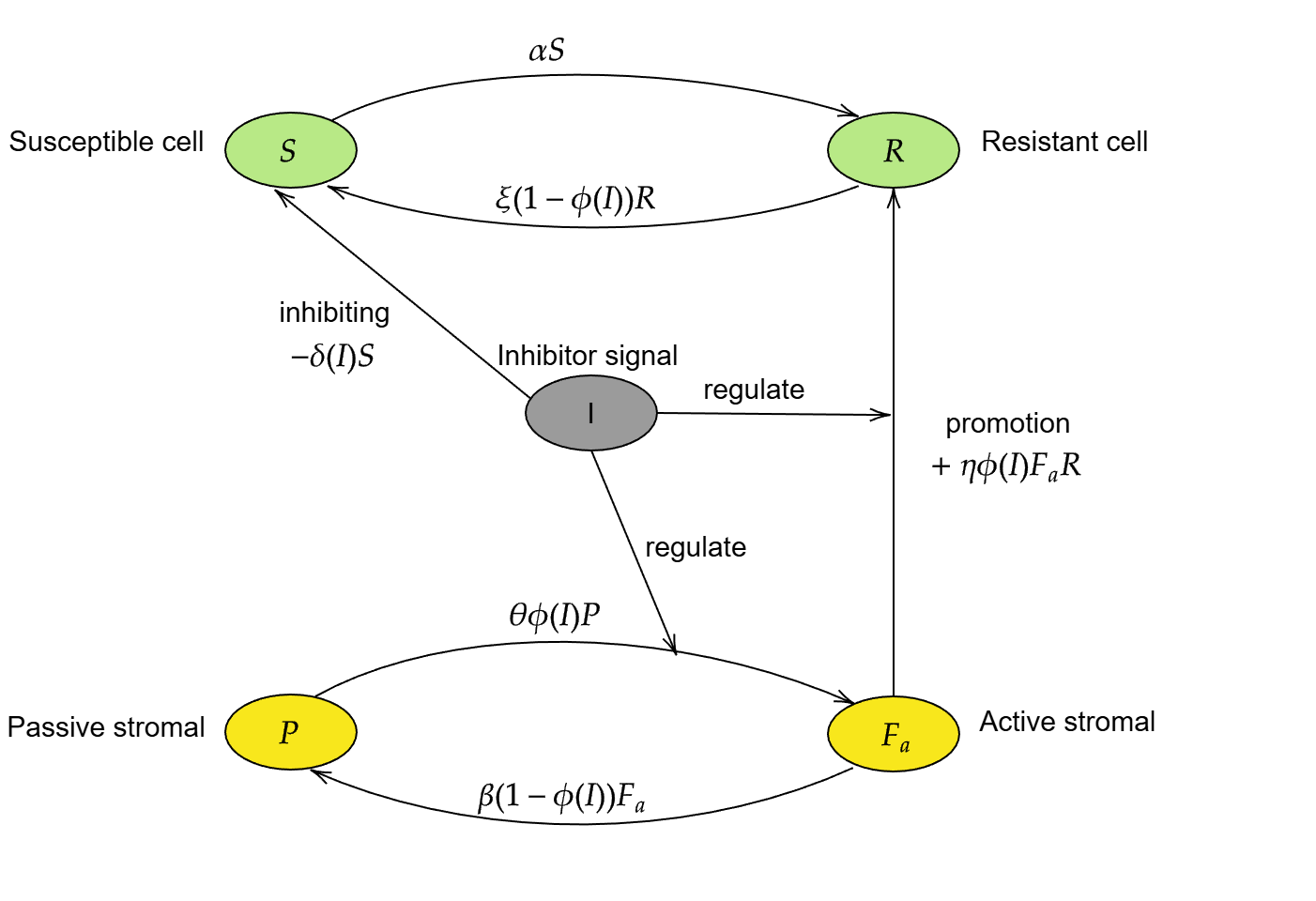}
    \caption{\textbf{Topology of the hybrid PDE--ODE interaction network.} The inhibitor $I$ acts as a central signaling hub connecting the tumor layer $(S,R)$ and the microenvironmental layer $(P,F_a)$. Key features include $I$ providing negative feedback to $S$ ($-\delta(I)S$) while activating the microenvironmental stromal cells via the switch $\phi(I)$. The switch promotes resistance via the $\eta\phi(I)F_aR$ term, creating a drug-gated feedback channel from stroma to the resistant phenotype.}
    \label{fig:model_topology}
\end{figure}

\medskip
\noindent\textbf{Two modeling levels: baseline vs.\ chemotaxis extensions.}
We analyze two levels of description:
\begin{enumerate}[label=(\roman*)]
\item the base hybrid PDE--ODE system \eqref{eq:system} (no chemotaxis), for which $P,F_a$ is governed by pointwise ODEs, and yielding a conservative stromal pool; and
\item chemotaxis-augmented extensions (Section~\ref{sec:chemotaxis}), in which spatial gradients of a signal appear explicitly in the flux and therefore require sufficient spatial regularity of that signal. 
We denote the chemotactic signal by $c$ and evolve it by a parabolic equation in the chemotaxis setting (e.g.\ $\partial_t c=d_c\Delta c+\mathcal{Q}(c)$). 
\end{enumerate}

\medskip
\noindent\textbf{Homogeneous pool branch.}
Governed by pointwise ODEs, the conserved quantity $P(x,t)+F_a(x,t)=P_0(x)+F_{a,0}(x)$ may be spatially heterogeneous, and the system admits a continuum of local steady states parameterized by this pool. 
In the stability and pattern-formation analysis below, we therefore restrict attention to the homogeneous branch corresponding to spatially constant total stromal density (see also the definition of homogeneous equilibrium in Section~\ref{subsec:turing_stability}).

\subsection{Main message and contributions: Bridging theory and clinical reality}
\label{subsec:main_contrib}

This work is motivated by a recurring translational tension: clinically realistic interventions act as transient, open-loop forcings, whereas spatial resistance niches are often sustained by endogenous, feedback-mediated microenvironmental remodeling. 
Our central message is mechanism-level: in damped hybrid PDE--ODE systems, whether therapeutic perturbations relax toward homogeneity or instead permit spatial self-organization is determined by the interplay between coupling directionality (open-loop vs.\ closed-loop) and damping/relaxation structure (crowding competition, signal decay, and washout).

Unlike standard models that assume steady or sustained forcing, our framework incorporates the clinically relevant constraint of single-dose pharmacokinetics through a decaying inhibitor field $I$. 
In this stringent baseline, we show that directed transport can overcome relaxation only when it participates in a feedback loop that reshapes the effective transport operator---a structural insight that suggests testable hypotheses about when stromal crosstalk may support, or fail to support, persistent spatial heterogeneity.

\medskip
\noindent\textbf{Main results.}
For a bounded domain $U\subset\mathbb{R}^N$ with smooth boundary and homogeneous Neumann conditions, our analysis yields three contributions:

\begin{enumerate}[label=(R\arabic*)]
\item \textbf{Rigorous mathematical backbone for biological consistency.}
For the base hybrid system \eqref{eq:system}, we establish global well-posedness and forward invariance of the biologically meaningful state space. 
Using quasi-positivity, we prove preservation of nonnegativity for all components. 
Global-in-time classical solutions are obtained via an $L^\infty$ continuation (blow-up alternative) closed by explicit a priori bounds and comparison estimates. 
This ensures that any instability observed in later chemotaxis extensions reflects genuine dynamical behavior rather than finite-time mathematical pathology.

\item \textbf{Therapeutic washout limit and the baseline no-Turing conclusion.}
We analyze the long-time reduction under realistic drug clearance,
\[
\partial_t I=d_I\Delta I-\gamma_I I,
\]
which implies exponential washout of the open-loop inhibitor. 
As $I(\cdot,t)\to 0$, all drug-mediated couplings vanish asymptotically and the dynamics reduce to the $I\equiv 0$ limit. 
In particular, the post-washout reaction--diffusion linearization of the tumor subsystem $(S,R)$ about its homogeneous coexistence equilibrium admits no diffusion-driven. Turing instability. 
Thus, in the damped baseline architecture, diffusion cannot destabilize homogeneity after the transient drug perturbation has relaxed. 
This establishes a stringent reference point for identifying which additional transport couplings can genuinely generate spatial structure.

\item \textbf{Directionality--damping principle for directed transport.}
To model spatial reorganization, we introduce chemotaxis extensions and establish a dichotomy governed by coupling topology:
\begin{enumerate}[label=(\roman*)]
    \item Unidirectional (open-loop) sensing. Populations respond to a relaxing signal field but do not regulate it. 
    Under logistic damping and signal relaxation, such one-way coupling can produce at most transient focusing and cannot generate diffusion-driven patterns from a homogeneous equilibrium.
    \item Bidirectional (closed-loop) feedback. The signal depends on the populations, closing a feedback loop and modifying the effective mobility matrix. 
    This yields explicit thresholds separating (a) a stable, well-posed regime, (b) a well-posed finite-band (Turing-type) instability regime associated with resistance-niche formation, and (c) an aggregation/ill-posed regime corresponding to loss of strong parabolicity and necessitating physical regularization.
\end{enumerate}
This classification provides a mathematically explicit mechanism map linking feedback strength to the qualitative form of spatial instability.
\end{enumerate}

\medskip
\noindent\textbf{Numerical validation.}
Reproducible simulations corroborate the analytical regimes: they confirm relaxation toward homogeneity in the base and unidirectional settings, demonstrate sustained finite-wavelength structure only under closed-loop feedback in the well-posed regime, and identify the regularization (e.g.\ flux limitation) required to obtain physically meaningful dynamics in the ill-posed regime.

\medskip
\noindent\textbf{Organization of the paper.}
\Cref{sec:models} introduces the base hybrid system, the analytical setting, and the chemotaxis model classes. 
\Cref{sec:base_analysis} establishes invariance, global well-posedness, the washout reduction, and the baseline no-Turing conclusion for the post-washout $(S,R)$ reaction--diffusion linearization. 
\Cref{sec:chemotaxis} develops the directionality--damping principle: unidirectional stability and the bidirectional classification into stable, finite-band (Turing-type), and aggregation/ill-posed regimes. 
\Cref{sec:numerical} provides reproducible numerical evidence supporting the classification and illustrates a minimal feedback experiment that restores finite-wavelength patterning. 
We conclude in the Discussion with interpretation, limitations, and modeling extensions.

\section{Model and Analytical Setting}\label{sec:models}
\subsection{Base hybrid PDE--ODE model (no chemotaxis)}
\label{subsec:base_model}

We begin with the base hybrid PDE--ODE system \eqref{eq:system} without chemotactic transport. 
This level of description isolates the competition--inhibition--conversion architecture and provides the analytical baseline for the mechanism results developed later. 
In particular, it allows us to establish invariance of the nonnegative cone, global well-posedness, long-time reduction under drug washout, and a baseline no-Turing conclusion for the damped reaction--diffusion core.

Let $U\subseteq\mathbb{R}^N$ ($N\in\{1,2,3\}$) be a bounded domain with $C^\infty$ boundary $\partial U$. 
Throughout the paper, we impose homogeneous Neumann boundary conditions for the diffusing components $(S,R,I)$,
\begin{equation}
\label{eq:neumann_bc}
\partial_{\mathrm n}S=\partial_{\mathrm n}R=\partial_{\mathrm n}I=0
\qquad\text{on }\partial U\times(0,\infty),
\end{equation}
corresponding to a closed tissue region with zero flux across the boundary.

\medskip
\noindent\textbf{Hybrid structure and baseline regime.}
The model \eqref{eq:system} consists of reaction--diffusion equations for $(S,R,I)$ coupled to a stromal conversion subsystem $(P,F_a)$, together with \eqref{eq:neumann_bc} and nonnegative initial data. 
In the hybrid regime, $(P,F_a)$ evolves pointwise in space as an ODE subsystem driven by the drug-dependent switch $\phi(I)$. 
This conservative switching limit retains microenvironmental state transitions; small source--sink corrections can be incorporated on longer timescales as in \eqref{eq:source_sink}.

Let $\mathbf u=(S,R,I,P,F_a)^{\mathsf T}$ and write \eqref{eq:system} in the semilinear form
\begin{align}
\label{eq:base_vector_form}
\partial_t\mathbf u-\mathcal D_{\mathrm{diff}}\Delta \mathbf u=\mathcal G(\mathbf u),
\qquad 
\mathcal D_{\mathrm{diff}}=\mathrm{diag}(d_S,d_R,d_I,0,0),
\end{align}
where $\mathcal G=(G_1,\dots,G_5)^{\mathsf T}$ is given by
\begin{align*}
\begin{cases}
G_1(\mathbf{u}) = \lambda_S S\left(1-\tfrac{S+R}{K}\right)-\alpha S-\delta(I)S+\xi(1-\phi(I))R,\\[4pt]
G_2(\mathbf{u}) = \lambda_R R\left(1-\tfrac{S+R}{K}\right)+\alpha S+\eta\,\phi(I)F_aR-\xi(1-\phi(I))R,\\[4pt]
G_3(\mathbf{u}) = -\gamma_I I,\\[4pt]
G_4(\mathbf{u}) = -\theta \phi(I)P+\beta(1-\phi(I))F_a,\\[4pt]
G_5(\mathbf{u}) = \theta \phi(I)P-\beta(1-\phi(I))F_a.
\end{cases}
\end{align*}
Here $\delta(I)=\delta_0 I/(I+K_I)$ is of Michaelis--Menten type \citep{cornish-bowden_one_2015} and $\phi(I)$ is a smooth switch (see Section~\ref{subsec:model_bio}). 
On bounded subsets of $\mathbb{R}_{\ge 0}$ these nonlinearities are smooth, so the reaction map $\mathcal G$ is locally Lipschitz. 
Table~\ref{tab:parameter_units} summarizes key parameters.

\begin{table}[htbp]
\centering
\caption{Key parameters, meanings, and units (final notation). Parameters marked by $\dagger$ appear only in chemotaxis / feedback extensions.}
\label{tab:parameter_units}
\begin{tabular}{lll}
\toprule
Parameter & Meaning & Unit \\
\midrule
\multicolumn{3}{l}{\textbf{Diffusion / transport}}\\
$d_S,d_R,d_I$ & diffusion coefficients of $S,R,I$ & mm$^2$/s \\
$d_c^\dagger$ & diffusion coefficient of chemoattractant $c$ & mm$^2$/s \\
$\chi_S^\dagger,\chi_R^\dagger$ & chemotactic sensitivity of $S,R$ to $\nabla c$ (unidirectional/abstract templates) & mm$^2$/(s$\cdot$M) \\
$\chi_S^{\prime\,\dagger}$ & additional sensitivity of $S$ to $\nabla c$ in the explicit feedback model & mm$^2$/(s$\cdot$M) \\
$\alpha_c^\dagger$ & flux-saturation strength in $\nabla c/(1+\alpha_c|\nabla c|)$ & mm/M \\
\midrule
\multicolumn{3}{l}{\textbf{Tumor kinetics (logistic competition and switching)}}\\
$\lambda_S,\lambda_R$ & intrinsic growth rates of $S,R$ & 1/s \\
$K$ & shared carrying capacity for $S+R$ & $M$ \\
$\alpha$ & phenotype switching rate $S\to R$ & 1/s \\
$\xi$ & phenotype switching rate $R\to S$ (drug-off switch) & 1/s \\
$\eta$ & promotion rate of $R$ by activated stroma (coupling via $A$) & 1/(s$\cdot$M) \\
\midrule
\multicolumn{3}{l}{\textbf{Drug dynamics and drug-mediated effects}}\\
$\gamma_I$ & drug clearance/decay rate in $\partial_t I=d_I\Delta I-\gamma_I I$ & 1/s \\
$\delta_0$ & maximal inhibition rate of $S$ by drug (via $\delta(I)$) & 1/s \\
$K_I$ & half-saturation concentration in $\delta(I)=\delta_0 I/(I+K_I)$ & $M$ \\
$\kappa_\phi$ & steepness parameter in $\phi(I)=\tanh(\kappa_\phi I)$ (or analogous switch) & 1/M \\
\midrule
\multicolumn{3}{l}{\textbf{Stromal switching module (baseline hybrid regime)}}\\
$\theta$ & activation rate $P\to A$ modulated by $\phi(I)$ & 1/s \\
$\beta$ & deactivation rate $A\to P$ modulated by $1-\phi(I)$ & 1/s \\
\midrule
\multicolumn{3}{l}{\textbf{Chemoattractant dynamics (feedback experiments)}}\\
$\kappa_c^\dagger$ & production rate of chemoattractant $c$ (e.g.\ $\kappa_c S$) & 1/s \\
$\rho_c^\dagger$ & decay rate of chemoattractant $c$ & 1/s \\
\bottomrule
\end{tabular}
\end{table}

\begin{assumption}[\textbf{Homogeneous stromal pool}]
\label{ass:homogeneous_pool}
In the hybrid regime, the conversion $(P,F_a)$ subsystem implies the pointwise conservation law $\partial_t(P+F_a)=0$ and hence
\[
P(x,t)+F_a(x,t)=P_0(x)+F_{a,0}(x)
\qquad \text{for all } t\ge 0.
\]
When studying spatially homogeneous equilibria and their stability, we therefore restrict attention to initial data with spatially constant total stromal pool:
\begin{equation}
\label{eq:pool_constant}
P_0(x)+F_{a,0}(x)\equiv P_T\qquad \text{for some constant }P_T\ge 0.
\end{equation}
This assumption ensures the existence of a homogeneous equilibrium branch in the baseline hybrid regime and will be imposed whenever homogeneous stability and pattern-formation criteria are invoked.
\end{assumption}

\subsection{Weak formulation and function spaces}
\label{subsec:weak_formulation}

Fix $T>0$ and denote $U_T:=U\times(0,T]$. 
Following standard parabolic theory \citep{evans_partial_2022,amann_linear_1995}, we formulate weak solutions for the diffusing variables $(S,R,I)$ under homogeneous Neumann boundary conditions. 
Let
\begin{align*}
  V_{\mathfrak N}(T)
  := L^2(0,T;H^1(U))\cap H^1\!\left(0,T;(H^1(U))'\right),
\end{align*}
and seek
\[
S,R,I\in V_{\mathfrak N}(T).
\]

In the hybrid regime, the stromal conversion variables $(P,F_a)$ evolve pointwise in space as an ODE subsystem driven by $\phi(I)$. 
Accordingly, we assume
\[
P,F_a\in C^1\bigl([0,T];L^2(U)\bigr),
\]
which is compatible with the ODE structure and with the pointwise conservation law for $P+F_a$ (see Proposition~\ref{prop:IPA_bounds}).

\medskip
\noindent\textbf{Weak formulation.}
Let $\mathbf u=(S,R,I,P,F_a)^{\mathsf T}$ and let $\mathcal G(\mathbf u)=(G_1,\dots,G_5)^{\mathsf T}$ be the reaction field in \eqref{eq:base_vector_form}. 
A quintuple $(S,R,I,P,F_a)$ is a weak solution on $(0,T)$ if $S,R,I$ and $P,F_a$ satisfy the above regularity and, for every test function
\[
\varphi=(\varphi_S,\varphi_R,\varphi_I)\in [V_{\mathfrak N}(T)]^3
\]
and for a.e.\ $t\in(0,T)$, the following identities hold:
\begin{align}
\label{eq:weak}
\begin{aligned}
\langle \partial_t S,\varphi_S\rangle + d_S(\nabla S,\nabla\varphi_S)_{L^2(U)} &= (G_1(\mathbf{u}),\varphi_S)_{L^2(U)},\\
\langle \partial_t R,\varphi_R\rangle + d_R(\nabla R,\nabla\varphi_R)_{L^2(U)} &= (G_2(\mathbf{u}),\varphi_R)_{L^2(U)},\\
\langle \partial_t I,\varphi_I\rangle + d_I(\nabla I,\nabla\varphi_I)_{L^2(U)} &= (G_3(\mathbf{u}),\varphi_I)_{L^2(U)}.
\end{aligned}
\end{align}
Here $(\cdot,\cdot)_{L^2(U)}$ denotes the $L^2(U)$ inner product and $\langle\cdot,\cdot\rangle$ denotes the duality pairing between $H^1(U)$ and $(H^1(U))'$. 
The homogeneous Neumann boundary conditions $\partial_{\mathrm n}S=\partial_{\mathrm n}R=\partial_{\mathrm n}I=0$ are encoded in the integration-by-parts step leading to \eqref{eq:weak}.

\medskip
\noindent\textbf{Initial data.}
We assume nonnegative initial data
\[
\mathbf u_0=(S_0,R_0,I_0,P_0,F_{a,0})\in (L^\infty(U))^5,
\qquad \mathbf u_0\ge 0\ \text{a.e.\ in }U.
\]

\subsection{Chemotaxis extensions and well-definedness assumptions}
\label{subsec:chemotaxis_models}

Starting from the base system \eqref{eq:system}, we introduce Keller--Segel-type fluxes for $S$ and $R$ along gradients of a signal field $c$. 
We use $c$ exclusively for chemotactic signaling. 
Since the fluxes depend on $\nabla c$, well-definedness requires sufficient spatial regularity of $c$, which we obtain by evolving $c$ via a parabolic relaxation equation.

\medskip
\noindent\textbf{Unidirectional (open-loop) sensing.}
We first consider a one-way coupling in which tumor populations respond to the signal $c$ but do not regulate it. 
In this open-loop setting, the signal dynamics are independent of $(S,R)$ and relax in time. 
A concrete tumor--drug--stroma unidirectional extension takes the form
\begin{align}
\label{eq:oneway_damping}
\begin{cases}
\partial_t S = d_S\Delta S - \chi_S\nabla\cdot(S\nabla c) 
+ \lambda_S S\left(1 - \tfrac{S+R}{K}\right) - \alpha S - \delta(I)S + \xi(1 - \phi(I))R, \\[4pt]
\partial_t R = d_R\Delta R - \chi_R\nabla\cdot(R\nabla c) 
+ \lambda_R R\left(1 - \tfrac{S+R}{K}\right) + \alpha S + \eta\phi(I)F_aR - \xi(1 - \phi(I))R, \\[4pt]
\partial_t c = wd_c\Delta c + \mathcal{Q}(c), \qquad \partial_c\mathcal{Q}(c) < 0,
\end{cases}
\end{align}
with the remaining components $(I,P,F_a)$ evolving as in \eqref{eq:system}. 
The condition $\partial_c\mathcal{Q}<0$ encodes signal decay/relaxation and provides a damping mechanism that competes against chemotactic focusing.

In the single-dose setting, Proposition~\ref{prop:IPA_bounds} implies $I(\cdot,t)\to 0$ exponentially, hence $\delta(I)$ and $\phi(I)$ vanish asymptotically. Therefore, the mode-wise stability question near the long-time homogeneous state reduces to an autonomous $(S,R)$ kinetics, and we isolate the transport mechanism via the templates \eqref{eq:oneway_logis_damping}–\eqref{eq:oneway}. Appendix~\ref{app:proof_corollary_oneway_damping} details rigorous mathematical derivations for such reduction and justify the drug-free dynamics in \eqref{eq:oneway_logis_damping}–\eqref{eq:oneway} as a chemotaxis extension of \eqref{eq:system}.
To emphasize transferability beyond the specific tumor--drug--stroma nonlinearities, we also consider the damped unidirectional templates
\begin{align}
\label{eq:oneway_logis_damping}
\begin{cases}
\partial_t S = d_S\Delta S - \chi_S\nabla \cdot (S\nabla c) + \lambda_S S\left(1 - \tfrac{S+R}{K} \right) + f_S(S,R), \\[2pt]
\partial_t R = d_R\Delta R - \chi_R\nabla \cdot (R\nabla c) + \lambda_R R\left(1 - \tfrac{S+R}{K} \right) + f_R(S,R), \\[2pt]
\partial_t c = d_c\Delta c + \mathcal{Q}(c), \qquad \partial_c\mathcal{Q}(c) < 0,
\end{cases}
\end{align}
and its abstract counterpart
\begin{align}
\label{eq:oneway}
\begin{cases}
\partial_t S = d_S\Delta S - \chi_S\nabla \cdot (S\nabla c) + f_S(S,R), \\[2pt]
\partial_t R = d_R\Delta R - \chi_R\nabla \cdot (R\nabla c) + f_R(S,R), \\[2pt]
\partial_t c = d_c\Delta c + \mathcal{Q}(c), \qquad \partial_c\mathcal{Q}(c) < 0.
\end{cases}
\end{align}

\medskip
\noindent\textbf{Bidirectional (closed-loop) feedback.}
To model settings in which the populations actively shape the guiding signal, we consider feedback systems in which the signal landscape depends on $(S,R)$. 
A convenient abstraction---sufficient for the linearized mechanism classification developed later---is the closed-loop class
\begin{align}
\label{eq:twoway}
\begin{cases}
\partial_t S = d_S\Delta S - \chi_S\nabla\cdot\!\bigl(S\nabla g(S,R)\bigr) + f_S(S,R), \\[2pt]
\partial_t R = d_R\Delta R - \chi_R\nabla\cdot\!\bigl(R\nabla g(S,R)\bigr) + f_R(S,R),
\end{cases}
\end{align}
where $g(S,R)$ represents a feedback-generated signal landscape. 
In this class, spatial inhomogeneities in $(S,R)$ generate gradients in $g(S,R)$ that further bias transport, providing a closed-loop amplification mechanism that is absent in the unidirectional case.

\medskip
\noindent\textbf{Well-definedness assumptions.}
In the unidirectional setting \eqref{eq:oneway_damping}--\eqref{eq:oneway}, the parabolic evolution of $c$ ensures $\nabla c$ is well-defined (in the weak sense) whenever $c\in L^2(0,T;H^1(U))$.
In the bidirectional abstraction \eqref{eq:twoway}, the fluxes are written directly in terms of $\nabla g(S,R)$. 
Accordingly, we assume $g\in C^1(\mathbb{R}^2)$ and work near sufficiently regular solutions so that the compositions $g(S,R)$ and $\nabla g(S,R)$ are meaningful; the stability analysis in Section~\ref{sec:chemotaxis} is carried out at smooth homogeneous equilibria, where these requirements are satisfied.

\section{Base Model Analysis}\label{sec:base_analysis}
\subsection{Invariant region and nonnegativity}
\label{subsec:invariant_positivity}

We first record the forward-invariant region underlying the base hybrid system \eqref{eq:system}. Many variants about standard positivity/invariance results for semilinear parabolic systems with homogeneous Neumann boundary conditions appear in the literature; see, e.g., \citep{amann_linear_1995,smoller_shock_1994}.
Since the state variables represent densities and concentrations, the physically meaningful phase space is the nonnegative cone. 
The result below establishes preservation of nonnegativity for all components under the standing Neumann boundary condition \eqref{eq:neumann_bc}.

\begin{theorem}[\textbf{Nonnegativity}]
\label{thm:positivity}
Let $U\subseteq\mathbb{R}^N$ ($N\in\{1,2,3\}$) be a bounded domain with $C^\infty$ boundary, and consider the base system \eqref{eq:system} under homogeneous Neumann boundary conditions
\[
\partial_{\mathrm n}S=\partial_{\mathrm n}R=\partial_{\mathrm n}I=0 \qquad \text{on }\partial U\times(0,\infty).
\]
Assume that all parameters in \eqref{eq:system} are nonnegative constants and that the initial data satisfy
\[
S_0,R_0,I_0,P_0,F_{a,0}\ge 0 \quad \text{a.e.\ in }U,
\qquad 
(S_0,R_0,I_0,P_0,F_{a,0})\in (L^\infty(U))^5.
\]
Then any classical solution $\mathbf{u}=(S,R,I,P,F_a)$ of \eqref{eq:system} on its maximal interval of existence $[0,T_{\max})$ remains nonnegative:
\[
S(x,t),\,R(x,t),\,I(x,t),\,P(x,t),\,F_a(x,t)\ge 0
\qquad \text{for all }(x,t)\in \overline U\times[0,T_{\max}).
\]
\end{theorem}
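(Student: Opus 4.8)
The plan is to establish forward invariance of the nonnegative cone by exploiting the triangular dependence of the couplings and by reducing the statement to the quasi-positivity (tangency) condition for the reaction field $\mathcal G$, which I then convert into a negative-part $L^2$ energy estimate closed by Gronwall's inequality. The triangular structure is the key organizing feature: the $I$-equation is fully decoupled, the $(P,F_a)$ ODE subsystem depends on the other components only through $I$, and the tumor pair $(S,R)$ sees $I$ and $F_a$ only through coefficients. I therefore treat the three blocks in the order $I\to(P,F_a)\to(S,R)$, so that by the time I reach the genuinely coupled tumor equations the drug and stromal variables are already known to be nonnegative and the relevant coefficients have a definite sign.

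First, the drug field. Since $\partial_t I=d_I\Delta I-\gamma_I I$ is a decoupled scalar linear parabolic equation with homogeneous Neumann data, the parabolic minimum principle applies: writing $m(t)=\min_{\overline U}I(\cdot,t)$ one obtains $m'(t)\ge-\gamma_I m(t)$, hence $m(t)\ge e^{-\gamma_I t}m(0)\ge0$ and $I\ge0$ (equivalently, testing against $-I^{-}$ gives $\tfrac{d}{dt}\|I^{-}\|_{L^2}^2\le0$). With $I\ge0$ the switch satisfies $\phi(I)\in(0,1)$, so the rates $\theta\phi(I)$ and $\beta(1-\phi(I))$ are nonnegative. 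The $(P,F_a)$ system is then, at each fixed $x$, a linear ODE whose generator $\left(\begin{smallmatrix}-\theta\phi(I) & \beta(1-\phi(I))\\ \theta\phi(I) & -\beta(1-\phi(I))\end{smallmatrix}\right)$ is Metzler (nonnegative off-diagonal entries), so the nonnegative quadrant is invariant. Concretely, if $P$ were the first coordinate to reach $0$ (with $F_a\ge0$ still), then $\partial_t P=\beta(1-\phi(I))F_a\ge0$ there, and symmetrically for $F_a$, so neither can cross into negativity. This argument is rigorous at the ODE level, with no spatial subtlety, and is consistent with the conservation $P+F_a\equiv P_T$ of Assumption~\ref{ass:homogeneous_pool}.

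The crux is the tumor pair $(S,R)$, where I first record quasi-positivity on the faces $\{S=0\}$ and $\{R=0\}$: using $I,R,S\ge0$ and $1-\phi(I)>0$ one checks $G_1|_{S=0}=\xi(1-\phi(I))R\ge0$ and $G_2|_{R=0}=\alpha S\ge0$. I then test the $S$- and $R$-equations against their negative parts $S^{-},R^{-}$. Integration by parts together with the Neumann condition turns each diffusion term into $-d\int_U|\nabla S^{-}|^2\le0$ (resp.\ for $R$), so diffusion is harmless and, notably, no boundary/Hopf-lemma analysis is needed on this energy route. Among the reaction contributions, the dissipative diagonal terms $-(\alpha+\delta(I))S$ and $-\xi(1-\phi(I))R$ contribute nonpositively; the good-signed cross-source terms $\xi(1-\phi(I))R^{+}$ and $\alpha S^{+}$ likewise contribute with a nonpositive sign and are simply discarded---this discarding is exactly the quasi-positivity content. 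What remains are the sign-indefinite logistic self-terms $\lambda_S S(1-\tfrac{S+R}{K})$ and $\lambda_R R(1-\tfrac{S+R}{K})$, the growth term $\eta\phi(I)F_a R$, and the cross-terms proportional to $R^{-}$ or $S^{-}$. Using that a classical solution is continuous, hence bounded on $\overline U\times[0,t]$ for each $t<T_{\max}$, I bound the coefficients of all these terms by a finite $C(t)$ depending on $\sup_{[0,t]}\|(S,R,F_a)\|_{L^\infty}$, which yields
\begin{align*}
\frac{1}{2}\frac{d}{dt}\bigl(\|S^{-}\|_{L^2}^2+\|R^{-}\|_{L^2}^2\bigr)
\le C(t)\,\bigl(\|S^{-}\|_{L^2}^2+\|R^{-}\|_{L^2}^2\bigr).
\end{align*}
Since the initial negative parts vanish, Gronwall's inequality forces $\|S^{-}(t)\|_{L^2}=\|R^{-}(t)\|_{L^2}=0$ on $[0,T_{\max})$, i.e.\ $S,R\ge0$.

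I expect the main obstacle to be precisely the quadratic logistic competition. Because $1-(S+R)/K$ is sign-indefinite, the tumor kinetics are not quasimonotone (cooperative), so cone invariance cannot be read off from a comparison or monotone-flow principle as it can for $(P,F_a)$, and the Michaelis--Menten and stromal-coupling terms add further inhomogeneity. The resolution is that these problematic terms are not sign-critical---they are absorbed into the Gronwall constant via the time-local $L^\infty$ bound furnished by classicality---whereas the only terms whose sign actually matters, the inter-species source terms feeding $S$ from $R$ and $R$ from $S$, carry exactly the good sign guaranteed by quasi-positivity. This cleanly separates \emph{size} (controlled by a priori bounds) from \emph{sign} (controlled by the tangency structure) and delivers nonnegativity of all five components on the whole maximal interval $[0,T_{\max})$.
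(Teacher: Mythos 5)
Your proof is correct, but it takes a genuinely different route from the paper's for the key step. Both proofs share the same two ingredients for the ``easy'' blocks: the quasi-positivity of the reaction field (your sign-critical cross terms $\xi(1-\phi(I))R$ and $\alpha S$) and the Metzler structure of the $(P,F_a)$ generator. The divergence is in how nonnegativity of the diffusing components is closed. The paper treats $(S,R,I)$ jointly by a pointwise touching-point argument: it defines the maximal nonnegativity time $\tau$, and at a first zero $(x^*,\tau)$ it shows $\Delta S\ge 0$ (including a careful boundary analysis under the Neumann condition: tangential gradient plus positive semidefinite Hessian when $x^*\in\partial U$), so that $\partial_t S(x^*,\tau)\ge 0$, contradicting loss of nonnegativity. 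You instead exploit the triangular coupling $I\to(P,F_a)\to(S,R)$ and run a Stampacchia negative-part $L^2$ estimate on the tumor pair, absorbing the sign-indefinite logistic and stromal terms into a Gronwall constant via the time-local $L^\infty$ bound that classicality provides. Your route buys two things: (i) it needs no pointwise boundary analysis at all, since the Neumann condition simply annihilates the boundary term in the integration by parts, and (ii) it sidesteps the delicate ``cannot become negative in the immediate future'' inference at a single touching point, which in the paper's form is the least rigorous step (strictly, $\partial_t S\ge 0$ at one point does not by itself forbid crossing; one normally repairs this with an $\varepsilon e^{\lambda t}$ perturbation or a strong maximum principle). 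The energy route also transfers to weak solutions. What the paper's route buys in exchange is that it is entirely pointwise and elementary, requiring none of the (standard but nontrivial) facts you implicitly invoke: that $S^-\in H^1(U)$ with $\nabla S^-=-\nabla S\,\mathbf{1}_{\{S<0\}}$, and that $t\mapsto\|S^-(t)\|_{L^2}^2$ is differentiable with the expected derivative. Your ordering of the blocks is also sound: establishing $I\ge 0$ first is genuinely needed before $\phi(I)\in(0,1)$ and $\delta(I)\ge 0$ can be used in the Metzler and dissipativity arguments, and you use it exactly that way.
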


\begin{proof}
\noindent\textbf{Step 1: Quasi-positivity.}
Let $\mathcal G=(G_1,\dots,G_5)^{\mathsf T}$ be as in \eqref{eq:base_vector_form} with $\mathbf u=(S,R,I,P,F_a)^{\mathsf T}$. 
We verify that $\mathcal G$ is quasi-positive on $\mathbb{R}^5_{\ge 0}$: for each component $i$, if $u_i=0$ and $\mathbf u\ge 0$, then $G_i(\mathbf u)\ge 0$. 
Indeed, for $\mathbf u\ge 0$,
\begin{align*}
G_1\big|_{S=0} &= \xi(1-\phi(I))R \ge 0,\\
G_2\big|_{R=0} &= \alpha S \ge 0,\\
G_3\big|_{I=0} &= 0,\\
G_4\big|_{P=0} &= \beta(1-\phi(I))F_a \ge 0,\\
G_5\big|_{F_a=0} &= \theta\phi(I)P \ge 0,
\end{align*}
since $\phi(I)\in(0,1)$ and all parameters are nonnegative.

\smallskip
\noindent\textbf{Step 2: Positivity of the diffusing components $(S,R,I)$.}
Let $\mathbf{u}=(S,R,I)$. We define the maximal interval of nonnegativity as
\begin{equation*}
    \tau\coloneqq \sup\left\{t\in [0,T_{\max}): S(\cdot,s), R(\cdot,s), I(\cdot,s)\ge 0 \text{ on } \bar U \text{ for all } s\in [0,t] \right\}.
\end{equation*}
We claim that $\tau=T_{\max}$. 
Suppose, for the sake of contradiction, that $\tau<T_{\max}$.
By the definition of $\tau$ and continuity, we have:
\begin{enumerate}[label=(\roman*)]
    \item $S(x,t) \ge 0$, $R(x,t)\ge 0$, and $I(x,t)\ge 0$ for all $x\in \bar U$ and $t\in [0,\tau]$.
    \item There exists a component (without loss of generality, assume $S$) and a point $x^*\in \bar U$ such that $S(x^*,\tau)=0$.
\end{enumerate}
At the point $(x^*,\tau)$, $S$ attains a global spatial minimum (value $0$). If $x^*\in U$, then $\Delta S(x^*,\tau)\ge 0$ by the standard necessary condition for a minimum. We next show that the same sign condition holds if $x^*\in \partial U$. The homogeneous Neumann boundary condition,
\[
\partial_{\mathrm n} S(x^*,\tau) = \nabla S(x^*,\tau)\cdot \mathrm n(x^*) = 0,
\]
implies that the gradient of $S$ at $x^*$ lies in the tangent space $T_{x^*}\partial U$, where $\mathrm n(x^*)$ is the unit outward normal vector of $\partial U$ at $x^*$.
Taylor expansion about $S$ at $x^*$ gives
\begin{equation*}
    S(x,\tau) = S(x^*,\tau) + \nabla S(x^*,\tau)\cdot (x-x^*) + \frac{1}{2} (x-x^*)^\top \mathrm{H}S(x^*,\tau) (x-x^*) + o(\lVert x-x^*\rVert ^2), 
\end{equation*}
where $\mathrm{H}S(x^*,\tau)$ is the Hessian matrix of $S$ at $x^*$.
The minimum at $x^*$ implies that either 
\begin{equation}\label{eq:minimum_condition1}
\nabla S(x^*,\tau)\cdot (x-x^*) \ge 0,
\end{equation}
or, 
\begin{equation}\label{eq:minimum_condition2}
\nabla S(x^*,\tau)\cdot (x-x^*) = 0 \text{ and } \frac{1}{2} (x-x^*)^\top \mathrm{H}S(x^*,\tau) (x-x^*)\ge 0
\end{equation}
Let $x\in U$, we define a projection operator:
\begin{align*}
    T_0: U &\to T_{x^*}\partial U \\[4pt]
       x &\mapsto (x-x^*) + \Big(\mathrm n(x^*)\cdot (x-x^*)\Big)\, \mathrm{n}(x^*).
\end{align*}
$T_0(x)$ is the projection of $x-x^*$ onto the tangent space $T_{x^*}\partial U$. Since $U$ is a bounded domain with $C^\infty$ boundary $\partial U$, it is easy to see the linear span of $T_0(x)$ for $x\in U$ is the whole tangent space:
\begin{equation*}
    \operatorname{span}_{\mathbb{R}} \{T_0(x): x\in U\} = T_{x^*}\partial U.
\end{equation*}
\eqref{eq:minimum_condition1} and $\nabla S(x^*,\tau)\in T_{x^*}\partial U$ ensure that $\nabla S(x^*,\tau) = 0$. \eqref{eq:minimum_condition2} ensures that $\mathrm{H}S(x^*,\tau)$ is positive semidefinite. Therefore, we have for $x^*\in \partial U$,
\[
\Delta S(x^*,\tau) \ge 0.
\]
Going back to the $S$-equation at $(x^*,\tau)$:
\[
\partial_t S(x^*,\tau) = d_S\Delta S(x^*,\tau) + G_1(S = 0) \ge 0.
\]
Therefore, $S$ cannot become negative in the immediate future $t>\tau$. This contradiction implies $\tau=T_{\max}$.

\smallskip
\noindent\textbf{Step 3: Positivity of the stromal ODE subsystem $(P,F_a)$.}
For each fixed $x\in U$ the stromal variables satisfy the linear ODE system
\begin{equation}\label{eq:PA_ODE_pointwise}
\frac{d}{dt}
\begin{pmatrix}
P(x,t)\\ A(x,t)
\end{pmatrix}
=
B(I(x,t))
\begin{pmatrix}
P(x,t)\\ A(x,t)
\end{pmatrix},
\qquad
B(I)=
\begin{pmatrix}
-\theta\phi(I) & \beta(1-\phi(I))\\
\theta\phi(I) & -\beta(1-\phi(I))
\end{pmatrix}.
\end{equation}
Note that $B(I)$ is a Metzler matrix for every $I\ge0$ (its off-diagonal entries are nonnegative).

Fix $x\in U$ and write $p(t)=P(x,t)$, $a(t)=A(x,t)$.
If $p(t_0)=0$ at some time $t_0$ with $a(t_0)\ge0$, then from \eqref{eq:PA_ODE_pointwise},
\[
p'(t_0)=\beta(1-\phi(I(x,t_0)))\,a(t_0)\ge 0,
\]
so $p$ cannot decrease below $0$ at $t_0$.
Similarly, if $a(t_0)=0$ with $p(t_0)\ge0$, then
\[
a'(t_0)=\theta\phi(I(x,t_0))\,p(t_0)\ge 0,
\]
so $a$ cannot decrease below $0$ at $t_0$.
Hence the nonnegative quadrant in $(p,a)$ is forward invariant, proving the claim.

Combining Steps 2 and 3 yields nonnegativity of all components on $\overline U\times[0,T_{\max})$.
\end{proof}

\subsection{A priori bounds for the drug and stromal subsystem}
\label{subsec:bounds_IPA}

The base hybrid model \eqref{eq:system} contains two structural features that provide robust a priori control on the maximal existence interval: 
(i) the drug field $I$ satisfies a decoupled linear parabolic equation with clearance, and 
(ii) in the hybrid regime, the stromal conversion subsystem $(P,F_a)$ is conservative and bounded. 
We record these bounds in a form used repeatedly in the global continuation and long-time asymptotics arguments.

\begin{proposition}[\textbf{Bounds for $(I,P,F_a)$}]
\label{prop:IPA_bounds}
Let $\mathbf u=(S,R,I,P,F_a)$ be a classical solution of \eqref{eq:system} on its maximal interval of existence $[0,T_{\max})$, under homogeneous Neumann boundary conditions for $(S,R,I)$ and with nonnegative initial data $\mathbf u_0\in (L^\infty(U))^5$.

\begin{enumerate}[label=(\roman*)]
\item \textbf{Drug decay and $L^\infty$ control.}
The drug satisfies
\begin{equation}
\label{eq:I_decay}
\|I(t)\|_{L^\infty(U)} \le e^{-\gamma_I t}\,\|I_0\|_{L^\infty(U)}
\qquad \text{for all } t\in[0,T_{\max}).
\end{equation}

\item \textbf{Pointwise conservation of the stromal pool.}
For all $(x,t)\in U\times[0,T_{\max})$,
\begin{equation}
\label{eq:PA_conservation}
P(x,t)+F_a(x,t)=P_0(x)+F_{a,0}(x).
\end{equation}
In particular,
\begin{equation}
\label{eq:PA_bounds_pointwise}
0\le P(x,t),\,F_a(x,t)\le P_0(x)+F_{a,0}(x)\qquad \text{for all }(x,t)\in U\times[0,T_{\max}).
\end{equation}

\item \textbf{Uniform $L^\infty$ boundedness of $(I,P,F_a)$.}
For all $t\in[0,T_{\max})$,
\begin{equation}
\label{eq:IPA_uniform_bound}
\|I(t)\|_{L^\infty(U)}+\|P(t)\|_{L^\infty(U)}+\|F_a(t)\|_{L^\infty(U)}
\le \|I_0\|_{L^\infty(U)}+\|P_0\|_{L^\infty(U)}+\|F_{a,0}\|_{L^\infty(U)}.
\end{equation}
\end{enumerate}
\end{proposition}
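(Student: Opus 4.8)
The three assertions are a priori bounds dictated by the special structure of the $(I,P,F_a)$ block, so the plan is to treat each by the cheapest structural mechanism rather than by any delicate PDE estimate. For (i), the key observation is that the $I$-equation $\partial_t I=d_I\Delta I-\gamma_I I$ is linear, autonomous, and completely decoupled from $(S,R,P,F_a)$, hence amenable to the parabolic comparison principle. I would compare $I$ against the spatially constant supersolution $\bar I(t):=e^{-\gamma_I t}\|I_0\|_{L^\infty(U)}$: since $\bar I$ is $x$-independent we have $\Delta\bar I=0$ and $\partial_{\mathrm n}\bar I=0$, so $\bar I$ solves the same equation with the homogeneous Neumann condition and dominates the data, $\bar I(0)=\|I_0\|_{L^\infty}\ge I_0$ a.e. The comparison principle then gives $I\le\bar I$, and combined with the nonnegativity $I\ge 0$ from Theorem~\ref{thm:positivity} this sandwiches $0\le I(x,t)\le e^{-\gamma_I t}\|I_0\|_{L^\infty}$, which is exactly \eqref{eq:I_decay}. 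An equivalent and even cleaner route is the substitution $w:=e^{\gamma_I t}I$, which cancels the zeroth-order term and reduces the problem to the pure heat equation $\partial_t w=d_I\Delta w$ under Neumann data, whose spatial maximum is nonincreasing; this lets one cite a textbook maximum principle directly.

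For (ii), the conservation law is purely algebraic and requires no analysis. Adding the $P$- and $F_a$-equations, the reaction terms cancel identically because the stromal kinetics have the column-sum-zero (Metzler) form already recorded in \eqref{eq:PA_ODE_pointwise}; since these are pointwise-in-$x$ ODEs with no spatial coupling, this gives $\partial_t\bigl(P(x,t)+F_a(x,t)\bigr)=0$ at every $x$, and integrating in $t$ yields the pointwise conservation \eqref{eq:PA_conservation}. The two-sided bound \eqref{eq:PA_bounds_pointwise} then follows by a sandwich: Theorem~\ref{thm:positivity} gives $P,F_a\ge 0$, so $0\le P(x,t)\le P(x,t)+F_a(x,t)=P_0(x)+F_{a,0}(x)$, and symmetrically for $F_a$.

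Part (iii) is then a direct combination of (i) and (ii). Pointwise, $I(x,t)\le e^{-\gamma_I t}\|I_0\|_{L^\infty}\le\|I_0\|_{L^\infty}$ from (i), while $P(x,t)+F_a(x,t)=P_0(x)+F_{a,0}(x)\le\|P_0\|_{L^\infty}+\|F_{a,0}\|_{L^\infty}$ from (ii); since all three fields are nonnegative, taking the supremum over $x\in U$ controls each $L^\infty$ norm by the initial data and yields the uniform bound \eqref{eq:IPA_uniform_bound}. I would emphasize that this is precisely the quantity needed downstream: it certifies that the $(I,P,F_a)$ block cannot contribute to finite-time blow-up, so the $L^\infty$ continuation (blow-up alternative) for global well-posedness can be reduced to controlling the $(S,R)$ subsystem alone.

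None of the three steps poses a genuine obstacle; the difficulty is entirely bookkeeping. The only point requiring real care is the comparison/maximum-principle step in (i): one must justify it under the homogeneous Neumann condition, in particular excluding the formation of a decreasing extremum at the boundary $\partial U$. This is handled by exactly the device already used in the boundary analysis of Theorem~\ref{thm:positivity}---the Neumann condition forces the tangential gradient to vanish and the relevant second-order term to carry the correct sign---so the argument transfers verbatim. With (i) in hand, the conservation identity (ii) and the aggregation (iii) are immediate.
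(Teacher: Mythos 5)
Your proposal is correct and follows essentially the same route as the paper: part (i) via the rescaling $w=e^{\gamma_I t}I$ to the Neumann heat equation plus the maximum principle (your constant-supersolution comparison is a trivially equivalent variant), part (ii) by summing the pointwise stromal ODEs and invoking nonnegativity from Theorem~\ref{thm:positivity}, and part (iii) by combining the two. The only cosmetic slip is labeling the conservation structure ``Metzler''---the Metzler property (nonnegative off-diagonal entries) is what drives positivity, while conservation comes from the column sums of $B(I)$ vanishing---but this does not affect the argument.
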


\begin{proof}
\noindent\textbf{(i) Drug decay.}
The drug equation is
\[
\partial_t I = d_I\Delta I-\gamma_I I \quad \text{in }U\times(0,T_{\max}),
\qquad \partial_{\mathrm n} I=0 \quad \text{on }\partial U\times(0,T_{\max}).
\]
Define $J(x,t):=e^{\gamma_I t}I(x,t)$. Then $J$ solves $\partial_t J=d_I\Delta J$ with homogeneous Neumann boundary condition, hence by the maximum principle,
\[
\|J(t)\|_{L^\infty(U)}\le \|J(0)\|_{L^\infty(U)}=\|I_0\|_{L^\infty(U)}.
\]
Multiplying back by $e^{-\gamma_I t}$ yields \eqref{eq:I_decay}.

\smallskip
\noindent\textbf{(ii) Conservation of $P+F_a$.}
In the hybrid regime, summing the $P$- and $F_a$-equations in \eqref{eq:system} gives $\partial_t(P+F_a)=0$ pointwise, which implies \eqref{eq:PA_conservation}. 
Together with nonnegativity (Theorem~\ref{thm:positivity}), this yields \eqref{eq:PA_bounds_pointwise}.

\smallskip
\noindent\textbf{(iii) Uniform $L^\infty$ bound.}
The estimate \eqref{eq:IPA_uniform_bound} follows by combining \eqref{eq:I_decay} with \eqref{eq:PA_bounds_pointwise} and taking $L^\infty(U)$ norms.
\end{proof}

\begin{remark}
The bounds in Proposition~\ref{prop:IPA_bounds} identify the key invariant structure used in the global continuation argument: $I$ is uniformly controlled and decays exponentially, while the stromal pool $P+F_a$ is conserved pointwise and hence $(P,F_a)$ remain uniformly bounded on $[0,T_{\max})$.
\end{remark}
\subsection{Well-posedness and global-in-time solutions}
\label{subsec:global_existence}

We next establish global well-posedness for the base hybrid system \eqref{eq:system} under homogeneous Neumann boundary conditions. 
The argument follows a standard semigroup framework for semilinear parabolic systems: local existence and uniqueness are standard; global continuation then follows once finite-time blow-up in the $L^\infty$ norm is excluded. 
The invariant-region bounds from Sections~\ref{subsec:invariant_positivity}--\ref{subsec:bounds_IPA}, in particular the a priori control of $(I,P,F_a)$ and comparison bounds for $(S,R)$, provide the key input for the continuation step.
To state the global result, we use the standard blow-up alternative for mild solutions in $L^\infty(U)$; we omit semigroup technicalities and refer to standard sources (e.g.\ \citep{amann_linear_1995,evans_partial_2022}) for the abstract framework.

\begin{theorem}[\textbf{Global existence of mild solutions and classical regularity}]\citep{rothe_global_1984}
\label{thm:global}
Let $U\subseteq\mathbb{R}^N$ ($N\in\{1,2,3\}$) be a bounded $C^\infty$ domain. 
Consider the base system \eqref{eq:system} under homogeneous Neumann boundary conditions for $(S,R,I)$ and assume nonnegative initial data
\[
\mathbf u_0=(S_0,R_0,I_0,P_0,F_{a,0})\in (L^\infty(U))^5,
\qquad \mathbf u_0\ge 0 \ \text{a.e.\ in }U.
\]
Then:
\begin{enumerate}[label=(\roman*)]
\item \textbf{Global mild well-posedness.}
There exists a unique global-in-time mild solution
\[
\mathbf u(\cdot,t)=(S(\cdot,t),R(\cdot,t),I(\cdot,t),P(\cdot,t),F_a(\cdot,t))\in (L^\infty(U))^5
\qquad\text{for all }t\ge 0.
\]

\item \textbf{Classical regularity under H\"older initial data.}
If, in addition,
\begin{align}
\label{eq:u_regularity}
(S_0,R_0,I_0)\in \bigl(C^{2+\alpha}(\overline U)\bigr)^3,
\qquad
(P_0,F_{a,0})\in \bigl(C^{\alpha}(\overline U)\bigr)^2,
\qquad 0<\alpha<1,
\end{align}
and the Neumann compatibility conditions hold,
\[
\partial_{\mathrm n}S_0=\partial_{\mathrm n}R_0=\partial_{\mathrm n}I_0=0
\qquad\text{on }\partial U,
\]
then the mild solution is classical. More precisely,
\[
S,R,I \in C^{2+\alpha,\,1+\alpha/2}(\overline{U}\times [0,\infty)),
\qquad 
P,F_a \in C^{\alpha,\,1+\alpha/2}(\overline{U}\times [0,\infty)),
\]
where for $P,F_a$ the time regularity is inherited from the pointwise ODE structure in the baseline hybrid regime.
\end{enumerate}
\end{theorem}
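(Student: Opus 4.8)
The plan is to follow the standard semigroup route for semilinear parabolic systems: produce a unique local mild solution together with a blow-up alternative, then upgrade to a global solution by furnishing finite-time $L^\infty$ a priori bounds that exclude blow-up. First I would realize \eqref{eq:system} as an abstract semilinear Cauchy problem on the product space $X=(C(\overline U))^5$ (or $(L^p(U))^5$ for large $p$), where $d_S\Delta,d_R\Delta,d_I\Delta$ generate analytic Neumann heat semigroups on the first three coordinates and the $(P,F_a)$-coordinates carry the trivial generator. Because $\delta(\cdot)$ and $\phi(\cdot)$ are smooth and the logistic terms are polynomial, the reaction field $\mathcal G$ is locally Lipschitz on bounded subsets of $X$; the variation-of-constants formula together with a contraction-mapping argument in the style of \citep{amann_linear_1995,henry_geometric_1981} yields a unique maximal mild solution on $[0,T_{\max})$ satisfying the dichotomy that either $T_{\max}=\infty$ or $\|\mathbf u(t)\|_X\to\infty$ as $t\uparrow T_{\max}$.

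The heart of the global step is an a priori bound excluding the second alternative on every finite interval. By Theorem~\ref{thm:positivity} the solution remains in the nonnegative cone, and by Proposition~\ref{prop:IPA_bounds} the components $(I,P,F_a)$ are already uniformly bounded in $L^\infty(U)$ for all $t$, since $I$ decays exponentially and $P+F_a$ is conserved. It therefore suffices to control $(S,R)$ on $[0,T]$. The decisive structural fact is that, on the nonnegative cone and with $F^*:=\|P_0+F_{a,0}\|_{L^\infty(U)}\ge\|F_a(t)\|_{L^\infty(U)}$, the tumor reaction terms obey the affine majorization
\begin{align*}
G_1(\mathbf u)\le \lambda_S S+\xi R,
\qquad
G_2(\mathbf u)\le \alpha S+(\lambda_R+\eta F^*)R,
\end{align*}
because the drug inhibition $-\delta(I)S$ and the logistic crowding terms $-\lambda_{S,R}S(S{+}R)/K$ and $-\lambda_{S,R}R(S{+}R)/K$ carry a favorable sign and $0\le\phi,1-\phi\le 1$. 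Thus the only growth in the $(S,R)$ kinetics is at most linear, with coefficients fixed by the uniform bounds above.

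I would then convert this linear majorization into an $L^\infty$ bound by comparison with the cooperative linear majorant system $\partial_t\bar S=d_S\Delta\bar S+\lambda_S\bar S+\xi\bar R$, $\partial_t\bar R=d_R\Delta\bar R+\alpha\bar S+(\lambda_R+\eta F^*)\bar R$ under Neumann conditions with spatially constant data $(\|S_0\|_{L^\infty(U)},\|R_0\|_{L^\infty(U)})$. Since the off-diagonal coefficients $\xi,\alpha$ are nonnegative, this majorant is quasi-monotone, its solution stays spatially homogeneous and solves $\dot{\bar v}=A\bar v$ with $\bar v(t)=e^{At}\bar v(0)$, which is finite for every finite $t$; the quasi-monotone comparison principle for parabolic systems then gives $0\le S\le\bar S$ and $0\le R\le\bar R$ on $[0,T]$. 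Hence $\sup_{[0,T]}\|\mathbf u(t)\|_X<\infty$ for every $T$, forcing $T_{\max}=\infty$ and proving~(i). For the classical regularity~(ii) I would bootstrap with parabolic Schauder theory: the linear drug equation with Hölder data and Neumann compatibility gives $I\in C^{2+\alpha,1+\alpha/2}$; the resulting Hölder coefficient $\phi(I)$ makes the pointwise $(P,F_a)$-ODEs have $C^\alpha$ spatial coefficients, so integrating in time preserves the $C^\alpha$ spatial regularity inherited from the data and yields the stated $C^{\alpha,1+\alpha/2}$ class; finally, feeding the now-Hölder fields $(I,P,F_a)$ and the bounded solution into $G_1,G_2$ renders the $(S,R)$ right-hand sides Hölder in $(x,t)$, and Schauder estimates for the linear Neumann parabolic problems they then satisfy deliver $S,R\in C^{2+\alpha,1+\alpha/2}$.

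The main obstacle is the a priori $L^\infty$ control of $(S,R)$: the kinetics are quadratic and the cross-couplings $\alpha S$, $\xi R$, $\eta\phi F_aR$ let the two phenotypes feed one another, while the competition term $-\lambda_S S(S{+}R)/K$ breaks the cooperativity needed for a naive comparison. The resolution is precisely that every quadratic contribution carries a dissipative sign on the nonnegative cone, so the kinetics admit a genuinely linear cooperative majorant; should one prefer to bypass the quasi-monotone comparison theorem, the same bound follows from $L^p$ energy estimates (testing against $S^{p-1}$ and $R^{p-1}$, exploiting the sign of the logistic dissipation, and passing $p\to\infty$ with semigroup smoothing), which is more computational but structurally identical.
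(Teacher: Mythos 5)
Your proposal is correct and follows essentially the same route as the paper: local mild existence with a blow-up alternative (which the paper cites from Rothe), nonnegativity plus the $(I,P,F_a)$ bounds of Proposition~\ref{prop:IPA_bounds}, the identical affine majorization $G_1\le\lambda_S S+\xi R$, $G_2\le\alpha S+(\lambda_R+\eta F_{a,\max})R$, and comparison with the same spatially constant linear cooperative system to exclude finite-time $L^\infty$ blow-up. Your additions---explicitly invoking quasi-monotonicity to justify the system comparison, and the Schauder bootstrap for part (ii), which the paper delegates to the citation---are sound refinements of the same argument rather than a different approach.
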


See \citep{ladyzenskaja_linear_1968} for the definition of these H\"older spaces.

\begin{proof}
Rothe et al. establish the unique mild solution for the semilinear parabolic equations on the maximal interval of existence. $[0,T_{\max})$ with $T_{\max}\in (0,\infty]$ with classical properties stated in (ii), and the following alternative:
\begin{equation}\label{eq:alternative}
    \lim_{t\to T_{\max}^-} \lVert \mathbf{u}(t)\rVert_{L^{\infty}(U)} = \infty \quad \text{ if } T_{\max} < \infty. 
\end{equation}

We show that for every $T<T_{\max}$,
\[
\sup_{0\le t\le T}\|\mathbf{u}(t)\|_{L^\infty(U)}\le C_T \quad \text{with} \quad \lim_{t\to T_{\max}^-} C_T <\infty,
\]
closing the proof and ensuring $T_{\max}=\infty$.

\begin{enumerate}[label=(\roman*)] 
\item Uniform control of $(I,P,F_a)$.
By Proposition~\ref{prop:IPA_bounds}, $(I,P,F_a)$ are uniformly bounded on $[0,T_{\max})$. In particular,
\begin{equation}
\label{eq:Famx_def}
\|F_a(t)\|_{L^\infty(U)}\le F_{a,\max}:=\|P_0\|_{L^\infty(U)}+\|F_{a,0}\|_{L^\infty(U)}
\qquad \text{for all }t\in[0,T_{\max}).
\end{equation}

\item Comparison bounds for $(S,R)$.
By Theorem~\ref{thm:positivity}, $S,R,I,P,F_a\ge0$ on $\overline U\times[0,T_{\max})$. 
Using $0\le \phi(I)\le 1$ and $\delta(I)\ge0$, and dropping nonpositive terms in \eqref{eq:system}, we obtain the pointwise inequalities
\begin{align}
\label{eq:S_ineq}
\partial_t S-d_S\Delta S \le \lambda_S S + \xi R,
\\[4pt]
\label{eq:R_ineq}
\partial_t R-d_R\Delta R \le \alpha S + \bigl(\lambda_R+\eta F_{a,\max}\bigr)R.
\end{align}
Let $(\bar s(t),\bar r(t))$ solve the linear ODE system
\begin{equation}
\label{eq:SR_ODE_compare}
\frac{d}{dt}
\begin{pmatrix}
\bar s\\ \bar r
\end{pmatrix}
=
\begin{pmatrix}
\lambda_S & \xi\\[2pt]
\alpha & \lambda_R+\eta F_{a,\max}
\end{pmatrix}
\begin{pmatrix}
\bar s\\ \bar r
\end{pmatrix},
\qquad
\begin{pmatrix}
\bar s(0)\\ \bar r(0)
\end{pmatrix}
=
\begin{pmatrix}
\|S_0\|_{L^\infty(U)}\\[2pt] \|R_0\|_{L^\infty(U)}
\end{pmatrix}.
\end{equation}
Since $(\bar s,\bar r)$ are spatially constant, they define a supersolution pair for \eqref{eq:S_ineq}--\eqref{eq:R_ineq}. 
By the parabolic comparison principle under homogeneous Neumann boundary conditions, it follows that
\[
S(\cdot,t)\le \bar s(t),\qquad R(\cdot,t)\le \bar r(t)\qquad \text{for all }t\in[0,T].
\]
Because \eqref{eq:SR_ODE_compare} is linear with constant coefficients, $(\bar s,\bar r)$ exist globally and grow at most exponentially. 
Hence for each fixed $T<T_{\max}$ there is a constant $C_T<\infty$ such that
\[
\sup_{0\le t\le T}\bigl(\|S(t)\|_{L^\infty(U)}+\|R(t)\|_{L^\infty(U)}\bigr)\le C_T.
\]
\end{enumerate}

Combining (i) and (ii) yields $\sup_{0\le t\le T}\|\mathbf{u}(t)\|_{L^\infty(U)}<\infty$ for every $T<T_{\max}$. 
By the blow-up alternative for maximal mild solutions in $L^\infty(U)$, this excludes finite-time blow-up and therefore $T_{\max}=\infty$.
\end{proof}

\begin{remark}[\textbf{Mechanism behind global continuation}]
The continuation argument hinges on two structural damping features in \eqref{eq:system}: 
(i) the drug field $I$ is uniformly controlled and decays exponentially, and 
(ii) the stromal conversion subsystem $(P,F_a)$ is conservative and uniformly bounded in the baseline hybrid regime. 
These properties provide a priori control that prevents finite-time blow-up of the remaining components and supports global-in-time well-posedness.
\end{remark}

\subsection{Long-time reduction and global attractivity of the homogeneous equilibrium}
\label{subsec:long_time_reduction}

We now characterize the long-time behavior of the base hybrid model \eqref{eq:system} under open-loop, single-dose drug forcing. 
The key structural feature is that the drug field $I$ evolves independently as a linear parabolic equation with clearance and therefore decays exponentially. 
Since the $I$-dependent couplings enter the remaining dynamics only through $\delta(I)$ and $\phi(I)$, the washout of $I$ implies that these couplings vanish asymptotically. 
As a consequence, the dominant long-time phenotypic composition is organized under the $I\equiv 0$ limit, in which we can close the tumor subsystem as an autonomous ODE for $(S,R)$ under the homogeneous limit $d_S=d_R=0$.

\medskip
\noindent\textbf{Drug relaxation and asymptotic decoupling.}
By Proposition~\ref{prop:IPA_bounds}(i), the drug satisfies
\[
\|I(t)\|_{L^\infty(U)} \le e^{-\gamma_I t}\,\|I_0\|_{L^\infty(U)} \qquad \text{for all }t\ge 0,
\]
and hence $I(\cdot,t)\to 0$ exponentially as $t\to\infty$. 
Moreover, since $\phi(I)\to 0$ as $I\to 0$ and $\delta(I)=\delta_0 I/(I+K_I)$ satisfies $\delta(I)\sim (\delta_0/K_I)I$ as $I\to 0$, all $I$-mediated interaction terms in \eqref{eq:system} decay exponentially in time. 
In the baseline hybrid system \eqref{eq:system}, the stromal subsystem is uniformly bounded and the stromal pool is conserved pointwise (Proposition~\ref{prop:IPA_bounds}(ii)).

\medskip
\noindent\textbf{Interpretation.}
Under homogeneous Neumann boundary conditions, diffusion preserves spatial averages and tends to damp spatial inhomogeneities. 
When $I(\cdot,t)\to 0$ uniformly, the reaction terms converge uniformly to their $I=0$ limits, so the local kinetics at different locations become asymptotically aligned. 
This motivates studying the homogeneous $I\equiv 0$ reduction as the organizing long-time limit for phenotypic composition. 
(We do not claim a global PDE convergence result  here; rather, the reduction identifies the limiting kinetics and provides the reference equilibrium used in the subsequent linear Turing stability analysis.)

\medskip
\noindent\textbf{Reduced homogeneous dynamics after washout.}
In the homogeneous limit (and in the absence of sustained drug input), the population dynamics reduce to
\begin{align}
\label{eq:dynamical}
\begin{cases}
\dfrac{dS}{dt} = f_S(S,R)\coloneqq \lambda_S S\Bigl(1-\dfrac{S+R}{K}\Bigr)-\alpha S+\xi R,\\[6pt]
\dfrac{dR}{dt} = f_R(S,R)\coloneqq \lambda_R R\Bigl(1-\dfrac{S+R}{K}\Bigr)+\alpha S-\xi R.
\end{cases}
\end{align}
Besides the trivial equilibrium $\mathbb{K}_0=(0,0)$, this system has an interior coexistence equilibrium
\[
\mathbb{K}^*=(S^*,R^*)
=\left(\frac{\xi K}{\alpha+\xi},\,\frac{\alpha K}{\alpha+\xi}\right),
\qquad \alpha+\xi>0,
\]
which represents the long-term phenotypic composition selected after drug washout.

\begin{theorem}[\textbf{Global attractivity of the coexistence equilibrium}]
\label{thm:global_attract}
Assume $\alpha+\xi>0$. 
Then the equilibrium $\mathbb{K}^*$ of \eqref{eq:dynamical} is globally asymptotically stable in the positive quadrant.
\end{theorem}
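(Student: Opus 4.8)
The plan is to exploit a conservation-type cancellation in the switching terms. Writing $N:=S+R$ for the total population and adding the two equations in \eqref{eq:dynamical}, the switching contributions $\mp(\alpha S-\xi R)$ cancel, leaving
\begin{equation*}
\dot N=(\lambda_S S+\lambda_R R)\Bigl(1-\tfrac{N}{K}\Bigr).
\end{equation*}
Since $\lambda_S S+\lambda_R R>0$ on the open quadrant, the sign of $\dot N$ equals that of $K-N$; the line $\{N=K\}$ is invariant and (by uniqueness) cannot be crossed, and using nonnegativity (\Cref{thm:positivity}) together with the standing positivity of $\lambda_S,\lambda_R,K$ the quantity $N(t)$ is monotone and bounded. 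A short argument excludes a limit $N_\infty\neq K$ (otherwise $\dot N$ would stay bounded away from $0$ in sign), so $N(t)\to K$. In particular the trajectory remains in a compact subset of the quadrant bounded away from the origin, excluding the origin from the $\omega$-limit set.

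Next I would track the residual switching imbalance $D:=\alpha S-\xi R$, which vanishes precisely at $\mathbb K^*$ (where $\alpha S^*=\xi R^*$). A direct computation gives
\begin{equation*}
\dot D=-(\alpha+\xi)\,D+\Bigl(1-\tfrac{N}{K}\Bigr)\bigl(\alpha\lambda_S S-\xi\lambda_R R\bigr).
\end{equation*}
As $N\to K$ and $(S,R)$ stay bounded, the forcing term tends to $0$; since $\alpha+\xi>0$, variation of constants (an asymptotically autonomous scalar linear equation with exponentially stable homogeneous part and vanishing inhomogeneity) yields $D(t)\to 0$. Inverting the linear relations $S=(\xi N+D)/(\alpha+\xi)$ and $R=(\alpha N-D)/(\alpha+\xi)$ and passing to the limit $N\to K$, $D\to 0$ gives $(S,R)\to(S^*,R^*)=\mathbb K^*$, establishing global attractivity. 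Equivalently one may invoke LaSalle's invariance principle with $W=\tfrac12(N-K)^2$, whose derivative $\dot W=-\tfrac1K(N-K)^2(\lambda_S S+\lambda_R R)\le0$ confines the $\omega$-limit set to the invariant segment $\{N=K\}$, on which the reduced dynamics $\dot S=\xi K-(\alpha+\xi)S$ converge to $S^*$.

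To upgrade attractivity to asymptotic stability I would confirm Lyapunov stability by linearizing at $\mathbb K^*$. Writing $m:=(\lambda_S S^*+\lambda_R R^*)/K>0$, the Jacobian at $\mathbb K^*$ has trace $-(m+\alpha+\xi)<0$ and determinant $m(\alpha+\xi)>0$, so both eigenvalues have negative real part and $\mathbb K^*$ is locally asymptotically stable. Global attractivity together with local (Lyapunov) stability then yields global asymptotic stability in the positive quadrant.

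The main obstacle is the bridging step between the one-dimensional reduction and the residual equation: one must secure boundedness of the trajectory, verify that the forcing $(1-N/K)(\alpha\lambda_S S-\xi\lambda_R R)$ genuinely vanishes in the limit, and rule out collapse toward the origin (where $\lambda_S S+\lambda_R R\to0$ would degrade the monotonicity argument for $N$). The positivity result \Cref{thm:positivity} and the invariance of $\{N=K\}$ handle boundedness and origin-exclusion, while the asymptotically autonomous scalar equation for $D$ converts the vanishing forcing into honest convergence; this is precisely where the standing positivity of $\lambda_S,\lambda_R,K$ must be invoked.
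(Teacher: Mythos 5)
Your proof is correct, and its main line is genuinely different from the paper's. The paper argues via LaSalle's invariance principle with the Lyapunov function $V(S,R)=(S+R-K)^2$: sublevel sets (intersected with the quadrant) are compact and invariant, $\dot V\le 0$ with equality exactly on $\{S+R=K\}$, and the largest invariant subset of that line is $\{\mathbb K^*\}$ because the reduced dynamics there are $\dot S=\xi K-(\alpha+\xi)S$ — this is precisely the "equivalently" aside in your last display, so you rediscovered the paper's proof as a side remark. Your primary route instead diagonalizes the problem through the linear change of variables $(N,D)=(S+R,\ \alpha S-\xi R)$: monotone convergence $N\to K$ from the scalar equation $\dot N=(\lambda_S S+\lambda_R R)(1-N/K)$, then $D\to 0$ from the asymptotically autonomous equation $\dot D=-(\alpha+\xi)D+(1-N/K)(\alpha\lambda_S S-\xi\lambda_R R)$ via variation of constants (your computation of $\dot D$ checks out, as does the inversion $S=(\xi N+D)/(\alpha+\xi)$, $R=(\alpha N-D)/(\alpha+\xi)$). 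This buys three things the paper's argument does not make explicit: (a) it is elementary and self-contained, requiring no invariance principle; (b) it rules out approach to the origin directly ($N$ monotone and bounded away from $0$), whereas in the paper's setting the origin also lies in $\{\dot V=0\}$ for large sublevel sets and is invariant, so strictly the LaSalle step needs this same exclusion; and (c) your linearization step (trace $-(m+\alpha+\xi)<0$, determinant $m(\alpha+\xi)>0$, consistent with the paper's Jacobian computation in \Cref{thm:turing}) supplies Lyapunov \emph{stability}, which the paper's proof leaves implicit — $V$ is not positive definite relative to $\mathbb K^*$ (it vanishes on the whole line $S+R=K$), so LaSalle alone yields only attractivity, not asymptotic stability. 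One caveat applies to both proofs: the arguments need $\lambda_S,\lambda_R>0$ (or at least $\lambda_S\xi+\lambda_R\alpha>0$ together with positivity of $\lambda_S S+\lambda_R R$ along trajectories); if $\lambda_S=\lambda_R=0$ then $N$ is conserved and the conclusion fails, so this standing positivity should be stated rather than inherited silently from the hypothesis $\alpha+\xi>0$.
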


\begin{proof}
Consider the Lyapunov function
\[
V(S,R)=(S+R-K)^2.
\]
Along solutions of \eqref{eq:dynamical}, a direct calculation gives
\begin{align}
\label{eq:V_der}
\dot V(S,R)=-2K^{-1}(S+R-K)^2(\lambda_S S+\lambda_R R)\le 0.
\end{align}
Moreover,
\[
\dot V(S,R)=0 \quad\Longleftrightarrow\quad S+R=K,
\]
since $\lambda_S S+\lambda_R R\ge 0$ in the positive quadrant. 
For each $r>0$, the sublevel set
\[
\Omega_r=\{(S,R):V(S,R)\le r\}
\]
is compact and positively invariant. 
Let
\[
E_r=\{(S,R)\in\Omega_r:\dot V(S,R)=0\}=\{(S,R):S+R=K\}.
\]
By LaSalle's invariance principle \citep{lasalle_extensions_1960,haddad_nonlinear_2011}, every solution starting in $\Omega_r$ approaches the largest invariant subset of $E_r$ as $t\to\infty$. 
On the invariant line $S+R=K$, the dynamics reduce to the scalar linear ODE
\[
\frac{dS}{dt}=\xi(K-S)-\alpha S= \xi K-(\alpha+\xi)S,
\]
which has the unique equilibrium $S^*=\xi K/(\alpha+\xi)$ and is globally attracting on $[0,K]$. 
Consequently, the largest invariant subset of $E_r$ is $\{\mathbb{K}^*\}$, and thus $(S(t),R(t))\to \mathbb{K}^*$ as $t\to\infty$.
\end{proof}

\Cref{fig:phase_portrait} illustrates the phase portrait of the reduced $(S,R)$ subsystem \eqref{eq:dynamical}. 
Vector fields point toward the unique steady state $\mathbb{K}^*$, showing the global stability of $\mathbb{K}^*$ and validating the prediction of \Cref{thm:global_attract}. 
The long-time reduction is consistent with a broader phenomenon in feedback-regulated compartmental models, where integrating the structured dynamics yields balanced laws for total populations and an autonomous ODE limit system whose equilibria organize stability behaviors \citep{liang_global_2025}.

\begin{figure}
    \centering
    \includegraphics[width=0.6\linewidth]{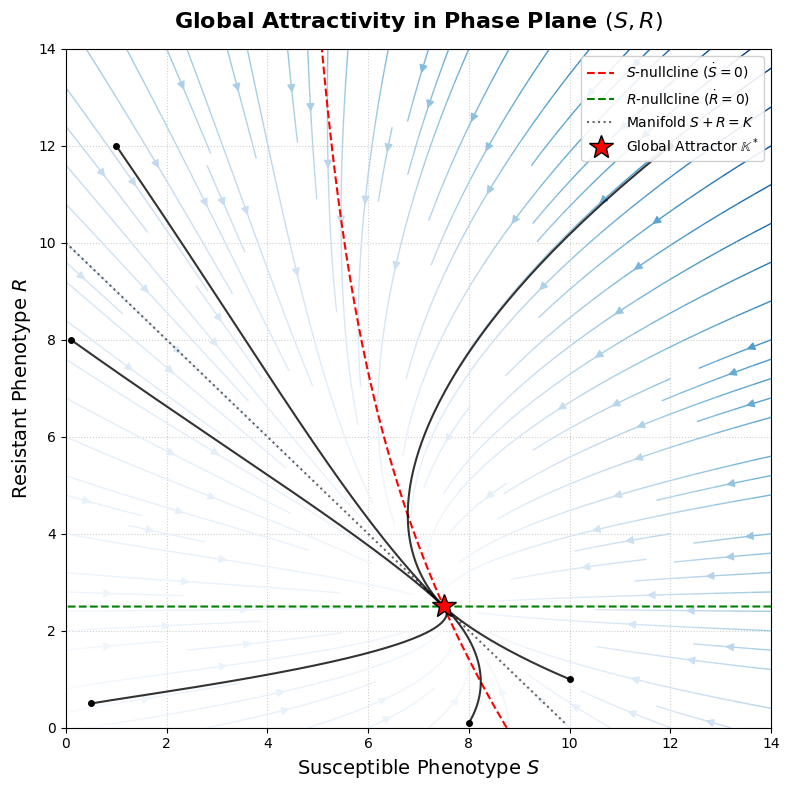}
    \caption{\textbf{Phase portrait of the reduced $(S,R)$ subsystem.} Red and green dashed lines depict the $S$- and $R$-nullclines, respectively. The black dashed line shows the one-dimensional invariant manifold $S+R=K$, where the $S$ and $R$ populations reach the carrying capacity $K$. The solid arrows demonstrate the direction and relative magnitude of the vector field, indicating the attractivity of the unique steady state $\mathbb K^*$ that is represented by the red pentagram. Representative trajectories starting from the block dots approach $\mathbb K^*$. Parameters: $K=10$, $\lambda_S=0.8$, $\lambda_R=0.4$, $\alpha=0.1$, $\xi=0.3$.}
    \label{fig:phase_portrait}
\end{figure}

\begin{remark}[\textbf{Phenotypic composition and a diagnostic ratio}]
\label{rmk:phy}
The explicit expression for $\mathbb{K}^*$ implies a simple diagnostic for the long-time phenotypic composition:
\[
\rho \coloneqq \frac{S^*}{R^*}=\frac{\xi}{\alpha}.
\]
Thus $\rho\gg 1$ corresponds to an $S$-dominated regime, $\rho\ll 1$ to an $R$-dominated regime, and $\rho\approx 1$ to balanced coexistence. 
This ratio will be used later to interpret how additional couplings (including chemotaxis and feedback) can shift the system between regimes.
\end{remark}

In summary, the open-loop drug field relaxes exponentially, so the $I$-mediated couplings in \eqref{eq:system} vanish asymptotically. 
The reduced homogeneous dynamics \eqref{eq:dynamical} therefore captures the long-time phenotypic composition selected after drug washout. 
This equilibrium provides the reference point for the Neumann linear stability and mechanism classification developed in Section~\ref{sec:chemotaxis}.

\subsection{No diffusion-driven instability in the damped base model (Turing stability)}
\label{subsec:turing_stability}

\begin{definition}[Homogeneous equilibrium]
\label{def:hom_eq}
In the following analysis, by a homogeneous equilibrium we mean a steady state
\[
(S^*,R^*,I^*,P^*,F_a^*)
\]
such that all components are spatially constant on $U$.
\end{definition}

A key baseline conclusion for \eqref{eq:system} is that its damping architecture does not support classical diffusion-driven (Turing-type) pattern generation after drug washout. 
Near the coexistence state selected by the $I\equiv 0$ limit, both tumor populations experience net negative feedback through logistic competition and switching, while diffusion under homogeneous Neumann boundary conditions acts as a smoothing mechanism. 
We formalize this by showing that no non-constant Neumann Laplacian mode can acquire positive growth rate for any choice of diffusion coefficients $d_S,d_R>0$. 

\begin{theorem}[\textbf{Turing stability of the reduced $(S,R)$ subsystem}]
\label{thm:turing}
Assume $\alpha+\xi>0$ and let $\mathbb{K}^*=(S^*,R^*)$ be the interior coexistence equilibrium of the reduced homogeneous system \eqref{eq:dynamical}. 
Consider the reaction--diffusion linearization of the $(S,R)$ subsystem under homogeneous Neumann boundary conditions. 
Let $\mu=\lambda_k\ge 0$ denote a Neumann Laplacian eigenvalue on $U$ (so that, on simple domains, $\mu=k^2$ with $k$ the wavenumber). 
Then for every $\mu>0$ all spectral growth rates satisfy $\Re \lambda(\mu)<0$. 
In particular, there is no diffusion-driven (Turing-type) instability: homogeneity cannot be destabilized by diffusion.
\end{theorem}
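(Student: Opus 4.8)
The plan is to reduce the spectral question to a finite family of $2\times 2$ matrices via Neumann mode decomposition, and then apply the Routh--Hurwitz criterion, exploiting a sign structure in the reaction Jacobian that is forced by the damped architecture of \eqref{eq:dynamical}. First I would linearize \eqref{eq:dynamical} about $\mathbb{K}^*$ and compute the reaction Jacobian $J=Df(\mathbb{K}^*)$. The essential simplification is that $\mathbb{K}^*$ lies on the invariant line $S^*+R^*=K$, so every crowding factor $1-(S+R)/K$ vanishes at the equilibrium. Writing $a:=\lambda_S S^*/K>0$ and $b:=\lambda_R R^*/K>0$, a direct computation yields
\[
J=\begin{pmatrix} -a-\alpha & -a+\xi \\ -b+\alpha & -b-\xi \end{pmatrix}.
\]
The feature to highlight is that \emph{both} diagonal entries are strictly negative: each phenotype is self-damping through logistic crowding together with its one-way outflow ($\alpha$ from $S$, $\xi$ from $R$). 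This records $\operatorname{tr}J=-(a+b+\alpha+\xi)<0$, and a short expansion gives $\det J=(a+b)(\alpha+\xi)>0$, so the kinetics are linearly stable at $\mathbb{K}^*$.

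Second I would decompose the linearized reaction--diffusion operator over the Neumann Laplacian eigenbasis. Expanding a perturbation $w=(\delta S,\delta R)$ in eigenfunctions $\psi_k$ with $-\Delta\psi_k=\lambda_k\psi_k$, $\partial_{\mathrm n}\psi_k=0$, and $\lambda_k\ge 0$, the linearization $\partial_t w=D\Delta w+Jw$ with $D=\operatorname{diag}(d_S,d_R)$ decouples across modes into the family
\[
J_\mu:=J-\mu D,\qquad \mu=\lambda_k\ge 0,
\]
whose eigenvalues are exactly the growth rates $\lambda(\mu)$. For a $2\times2$ matrix, $\Re\lambda(\mu)<0$ holds iff $\operatorname{tr}J_\mu<0$ and $\det J_\mu>0$. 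The trace condition is immediate, since $\operatorname{tr}J_\mu=\operatorname{tr}J-\mu(d_S+d_R)<0$ for every $\mu\ge 0$.

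The crux is the determinant. Writing it as a quadratic in $\mu$,
\[
\det J_\mu=\det J-\mu\bigl(d_R J_{11}+d_S J_{22}\bigr)+\mu^2 d_S d_R,
\]
the constant term is $\det J>0$ and the leading coefficient is $d_S d_R>0$. The standard necessary condition for a Turing instability is that the linear coefficient be negative, i.e.\ that $d_R J_{11}+d_S J_{22}>0$; this is precisely where the diagonal sign structure intervenes. Here $J_{11}=-a-\alpha<0$ and $J_{22}=-b-\xi<0$, so $d_R J_{11}+d_S J_{22}<0$ and hence the linear coefficient $-(d_R J_{11}+d_S J_{22})=d_R(a+\alpha)+d_S(b+\xi)$ is strictly positive. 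With all three coefficients positive, $\det J_\mu>0$ for every $\mu> 0$, and the claim follows.

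I do not expect a genuine analytic obstacle here; the entire conclusion is structural, and the ``hard part'' is really the conceptual one of recognizing \emph{why} diffusion cannot destabilize. Turing destabilization requires an activator--inhibitor sign pattern, in which one diagonal entry of $J$ is positive, so that differential diffusion can push $\det J_\mu$ below zero over a finite band of modes. The damped kinetics of \eqref{eq:dynamical} cannot produce such a pattern: both self-interaction entries are strictly negative, the quantity $d_R J_{11}+d_S J_{22}$ is a sum of two negative terms regardless of the diffusion ratio $d_S/d_R$, and consequently no unstable band can open. The only care needed is to confirm that the Neumann spectral decomposition genuinely reduces the PDE stability problem to the matrix family $\{J_\mu\}$, which holds because the linearized operator is self-adjoint in its diffusion part and the eigenfunctions $\psi_k$ form a complete orthonormal basis of $L^2(U)$ under the Neumann condition.
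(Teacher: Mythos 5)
Your proposal is correct and follows essentially the same route as the paper's proof: Neumann mode decomposition to the dispersion matrix $\mathcal{J}-\mu\,\mathrm{diag}(d_S,d_R)$, the observation that both diagonal entries of the reaction Jacobian are strictly negative (so the linear coefficient of the determinant quadratic in $\mu$ is positive), and Routh--Hurwitz to conclude $\Re\lambda(\mu)<0$ for all $\mu>0$. Your explicit computation of the Jacobian via the invariant line $S^*+R^*=K$ is a slightly more concrete presentation of the same sign structure the paper records, and your trace/determinant values agree with the paper's ($\det\mathcal{J}=\xi\lambda_S+\alpha\lambda_R$).
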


\begin{proof}
Linearizing \eqref{eq:dynamical} at $\mathbb{K}^*$ yields the Jacobian matrix $\mathcal{J}$ with
\[
\mathrm{tr}(\mathcal{J})
=\partial_S f_S(\mathbb{K}^*)+\partial_R f_R(\mathbb{K}^*)
=-\frac{\xi\lambda_S+\alpha\lambda_R}{\alpha+\xi}-\alpha-\xi<0,
\qquad
\det(\mathcal{J})=\xi\lambda_S+\alpha\lambda_R>0.
\]
Moreover, the diagonal entries satisfy $\partial_S f_S(\mathbb{K}^*)<0$ and $\partial_R f_R(\mathbb{K}^*)<0$.

For the reaction--diffusion linearization, each Neumann Laplacian eigenmode with eigenvalue $\mu=\lambda_k$ yields the dispersion matrix
\[
\mathcal{A}(\mu) = \mathcal{J} - \mu \operatorname{diag}(d_S,d_R)
= \begin{pmatrix}
    \partial_S f_S(\mathbb{K}^*)-\mu d_S & \partial_Rf_S(\mathbb{K}^*) \\[4pt]
    \partial_S f_R(\mathbb{K}^*) & \partial_Rf_R(\mathbb{K}^*)-\mu d_R
\end{pmatrix}.
\]
The characteristic polynomial for $\mathcal{A}(\mu)$ is
\[
\lambda^2-\mathrm{tr}(\mathcal{A}(\mu))\,\lambda+\det(\mathcal{A}(\mu))=0,
\]
or equivalently,
\[
\lambda^2+\lambda\Bigl(\mu(d_S+d_R)-\mathrm{tr}(\mathcal{J})\Bigr)+h(\mu)=0,
\]
where
\[
h(\mu)=d_Sd_R\mu^2-\bigl(d_S\,\partial_R f_R(\mathbb{K}^*)+d_R\,\partial_S f_S(\mathbb{K}^*)\bigr)\mu+\det(\mathcal{J}).
\]
Since $\mathrm{tr}(\mathcal{J})<0$ and $\partial_S f_S(\mathbb{K}^*)<0$, $\partial_R f_R(\mathbb{K}^*)<0$, we have
\[
\mu(d_S+d_R)-\mathrm{tr}(\mathcal{J})>0,
\qquad 
h(\mu)>0
\quad \text{for all }\mu>0.
\]
Therefore the Routh--Hurwitz criterion implies $\Re(\lambda(\mu))<0$ for all $\mu>0$, ruling out diffusion-driven destabilization.
\end{proof}

\begin{remark}[\textbf{Block structure for the full base model}]
\label{rmk:block_structure}
Theorem~\ref{thm:turing} is stated for the reduced $(S,R)$ reaction--diffusion subsystem (corresponding to the post-washout $I\equiv 0$ limit), but the same no-Turing conclusion is consistent with the full base model \eqref{eq:system}.

Fix a Neumann Laplacian eigenvalue $\mu=\lambda_k\ge 0$ and linearize \eqref{eq:system} about the homogeneous equilibrium branch with $I^*=0$, $(S^*,R^*)=\mathbb{K}^*$, and constant stromal pool $P^*+F_a^*=P_T$. 
After a permutation of variables consistent with the coupling topology (the drug equation is decoupled and the stromal subsystem does not feed back into $I$), the linearization can be written in block triangular form with diagonal blocks corresponding to $I$, $(P,F_a)$, and $(S,R)$.

The $I$-block contributes the eigenvalue
\[
\lambda_I(\mu)=-\gamma_I-d_I\mu<0,
\]
so the drug field produces no unstable spectrum. 
The $(P,F_a)$-block linearizes' to a matrix with spectrum $\{0,-\beta\}$; the zero eigenvalue corresponds to the locally conserved quantity $P+F_a$ and does not generate spatial growth since $P$ and $F_a$ do not diffuse at this level. 
Finally, the $(S,R)$-block yields the dispersion matrix $\mathcal{A}(\mu)$ in the proof of Theorem~\ref{thm:turing}. 
Hence the full linearization admits no diffusion-driven (Turing-type) instability under the standing assumptions.
\end{remark}

\section{Directionality--Damping Principle}\label{sec:chemotaxis}

\subsection{Mechanism statement}
\label{subsec:mechanism_statement}

Section~\ref{sec:base_analysis} established a baseline conclusion for the damped hybrid model \eqref{eq:system} under homogeneous Neumann boundary conditions: after a single-dose intervention, the drug field $I$ relaxes exponentially, the induced couplings vanish asymptotically, and the remaining dynamics are strongly dissipative. 
In particular, the post-washout reaction--diffusion core admits no diffusion-driven (Turing-type) bifurcation from homogeneity (Section~\ref{subsec:turing_stability}).

We now ask how directed transport modifies this baseline, and which structural features determine whether spatial heterogeneity can be generated from a homogeneous equilibrium. 
Chemotaxis introduces advective fluxes of the form $-\nabla\cdot(\chi_W W\nabla c)$, which bias migration of a population $W$ toward increasing values of a signal $c$. 
Such drift can transiently amplify spatial perturbations, but whether amplification produces a genuine pattern-forming instability is governed by two intertwined features:

\begin{enumerate}[label=(\roman*)]
\item \textbf{Directionality of coupling.}
In unidirectional (one-way) systems, the populations respond to a signal field but do not regulate it; the feedback loop required for self-reinforcing amplification is therefore absent. 
In bidirectional (two-way) systems, the signal depends on the populations (e.g.\ $c=g(S,R)$ or via a produced chemoattractant), closing a feedback loop and modifying the effective transport operator.

\item \textbf{Damping/relaxation structure.}
Logistic crowding, switching, and signal decay dissipate spatial perturbations and compete against chemotactic focusing. 
In the present model class, the base kinetics already contain strong damping (logistic competition and drug washout), providing a stringent test for whether directed transport can destabilize homogeneity.
\end{enumerate}

In the unidirectional setting, logistic damping together with signal relaxation implies that the homogeneous equilibrium cannot be destabilized in the Turing sense: one-way sensing is incapable of generating diffusion-driven patterns from homogeneity (\Cref{subsec:oneway}). 
By contrast, bidirectional feedback can destabilize intermediate wavelengths within the well-posed (strongly parabolic) regime (\Cref{subsec:twoway_parabolicity}), while exceeding the parabolicity threshold leads to Hadamard instability/aggregation and motivates biological regularization of the chemotactic flux (\Cref{subsec:twoway_classification}). 
The emphasis on generation from homogeneity under closed-loop coupling is consistent with recent tumor microenvironment models in which a chemotactic population both responds to and reinforces the guiding signal \citep{liu_bidirectional_2025}. 
The numerical experiments in Section~\ref{sec:numerical} illustrate both aspects: robust relaxation in the base and unidirectional regimes, and the restoration of finite-scale patterning only when minimal feedback closes the loop.

\subsection{Unidirectional coupling: no diffusion-driven pattern generation}
\label{subsec:oneway}

To emphasize transferability beyond the specific tumor--drug--stroma nonlinearities, we consider the unidirectional class \eqref{eq:oneway_logis_damping}--\eqref{eq:oneway}, where $c$ is a relaxing signal satisfying $\partial_c\mathcal Q<0$ and the populations $(S,R)$ experience dissipative kinetics. 
The condition $\partial_c\mathcal Q<0$ ensures that the signal relaxes.
The population senses a signal landscape but does not regulate it. 
In this open-loop setting, the feedback reinforcement required for diffusion-driven bifurcation is absent.

\begin{theorem}[\textbf{Unidirectional coupling cannot generate diffusion-driven patterns}]
\label{thm:oneway}
Consider \eqref{eq:oneway} with a unique interior equilibrium $\mathbf{u}^*=(S^*,R^*,c^*)$ and assume $\partial_c\mathcal{Q}<0$.
Let $\mathcal{J}_F$ be the Jacobian of $F=(f_S,f_R)$ at $(S^*,R^*)$,
\[
\mathcal{J}_F=
\begin{pmatrix}
a & b\\
c & d
\end{pmatrix},
\qquad
a=\partial_S f_S(\mathbf{u}^*),\ 
b=\partial_R f_S(\mathbf{u}^*),\ 
c=\partial_S f_R(\mathbf{u}^*),\ 
d=\partial_R f_R(\mathbf{u}^*).
\]
Assume the homogeneous kinetics are stable:
\[
a+d<0,
\qquad 
ad-bc>0.
\]
Then the homogeneous equilibrium is linearly stable with respect to all non-constant Neumann Laplacian modes provided either
\begin{enumerate}[label=(\roman*)]
\item $d\,d_S+a\,d_R<0$, \quad or
\item $4(ad-bc)\,d_Sd_R>(d\,d_S+a\,d_R)^2$.
\end{enumerate}
In particular, if $d_S=d_R$, stability holds for all modes and no diffusion-driven (Turing-type) bifurcation from homogeneity can occur.
\end{theorem}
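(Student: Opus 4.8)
The plan is to linearize the system \eqref{eq:oneway} about the homogeneous equilibrium $\mathbf{u}^*=(S^*,R^*,c^*)$, project onto each non-constant Neumann Laplacian eigenmode (eigenvalue $\mu>0$), and show that the resulting $3\times 3$ dispersion matrix is stable for every $\mu>0$ under the stated hypotheses. The decisive structural point is that in the \emph{unidirectional} setting the signal equation $\partial_t c = d_c\Delta c + \mathcal{Q}(c)$ does not depend on $(S,R)$, so the linearized $c$-row has no coupling back into $S$ or $R$. Consequently the mode-wise Jacobian is block lower-triangular (or upper, depending on ordering), and its spectrum is the union of the $c$-block spectrum and the $(S,R)$-block spectrum.

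First I would write the linearized operator at $\mathbf{u}^*$. Perturbations $\propto e^{\lambda t}\psi_k(x)$ with $-\Delta\psi_k=\mu\psi_k$ reduce the PDE to an algebraic eigenvalue problem. The chemotactic flux $-\chi_W\nabla\cdot(W\nabla c)$ linearizes, at a \emph{constant} equilibrium where $\nabla c^*=0$, to $-\chi_W W^*\Delta\delta c = +\chi_W W^*\mu\,\delta c$ — the perturbation of $W$ itself contributes nothing because it multiplies $\nabla c^*=0$. This is the key simplification: chemotaxis enters only through the off-diagonal coupling of $(S,R)$ to the $c$-mode, not into the $(S,R)$ self-block. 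The mode-wise matrix therefore takes the form
\begin{equation*}
\mathcal{A}(\mu)=
\begin{pmatrix}
a-\mu d_S & b & \chi_S S^*\mu\\
c & d-\mu d_R & \chi_R R^*\mu\\
0 & 0 & \partial_c\mathcal{Q}(c^*)-\mu d_c
\end{pmatrix}.
\end{equation*}
Because of the zero block in the bottom-left, the eigenvalues split: one eigenvalue is $\lambda_c(\mu)=\partial_c\mathcal{Q}(c^*)-\mu d_c$, which is strictly negative for all $\mu\ge 0$ since $\partial_c\mathcal{Q}<0$ and $d_c>0$; the remaining two are the eigenvalues of the upper-left $2\times 2$ block $\mathcal{J}_F-\mu\,\mathrm{diag}(d_S,d_R)$.

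It then remains to show the $2\times 2$ block $\mathcal{B}(\mu)=\mathcal{J}_F-\mu\,\mathrm{diag}(d_S,d_R)$ has $\Re\lambda<0$ for all $\mu>0$. I would apply the Routh--Hurwitz criterion: stability is equivalent to $\mathrm{tr}\,\mathcal{B}(\mu)<0$ and $\det\mathcal{B}(\mu)>0$. The trace is $a+d-\mu(d_S+d_R)$, which is negative for all $\mu\ge0$ because $a+d<0$ by the stable-kinetics hypothesis. The determinant is the quadratic $h(\mu)=d_Sd_R\,\mu^2-(d\,d_S+a\,d_R)\mu+(ad-bc)$, whose leading coefficient and constant term are both positive. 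Positivity of $h$ on $(0,\infty)$ then follows from either hypothesis: under (i), $-(d\,d_S+a\,d_R)>0$ makes all coefficients positive, so $h(\mu)>0$ for $\mu>0$ immediately; under (ii), the discriminant condition $4(ad-bc)d_Sd_R>(d\,d_S+a\,d_R)^2$ forces $h$ to have no real roots and hence remain positive. The case $d_S=d_R$ is a corollary: then $d\,d_S+a\,d_R=d_S(a+d)<0$, so hypothesis (i) holds automatically. This reproduces the familiar fact that equal diffusion precludes Turing instability.

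The main obstacle — and the point I would treat most carefully — is justifying the claim that the chemotactic terms do \emph{not} enter the $(S,R)$ self-block of the linearization, i.e.\ that the only linearized effect of taxis is the off-diagonal $\chi_W W^*\mu$ coupling. This rests entirely on the equilibrium being spatially homogeneous, so $\nabla c^*=0$ and the product-rule term $-\chi_W(\nabla W\cdot\nabla c^* + W^*\Delta c^*)$ vanishes at leading order except through $\delta c$. I would state this explicitly and verify it by expanding $\nabla\cdot(W\nabla c)$ to first order around $(W^*,c^*)$ with $\nabla c^*=0$. A secondary, more routine point is confirming the block-triangular eigenvalue splitting rigorously (the spectrum of a block-triangular matrix is the union of the diagonal blocks' spectra), which is immediate once the zero block is established. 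With these two structural observations in place, the quantitative Routh--Hurwitz computation is short, and the qualitative conclusion — that open-loop sensing adds only a strictly stable transverse direction and leaves the stable reaction--diffusion block essentially unchanged — is transparent.
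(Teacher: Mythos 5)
Your proposal is correct and follows essentially the same route as the paper's proof: the decoupled $c$-equation contributes only the strictly negative mode-wise eigenvalue $\partial_c\mathcal{Q}(c^*)-\mu d_c$, and stability of the $(S,R)$ block $\mathcal{J}_F-\mu\,\mathrm{diag}(d_S,d_R)$ is settled by the identical Routh--Hurwitz analysis of the trace and of the quadratic determinant $h(\mu)$ under hypotheses (i) or (ii). Your explicit $3\times 3$ block-triangular mode matrix---with the observation that taxis enters only as off-diagonal forcing because $\nabla c^*=0$ at a homogeneous equilibrium---is precisely the structure the paper spells out in its appendix for the concrete tumor--drug--stroma model, and your treatment of the $d_S=d_R$ case (reducing it to condition (i) via $d\,d_S+a\,d_R=d_S(a+d)<0$) is a harmless variant of the paper's spectral-shift argument $\lambda(\mu)=\lambda(0)-\mu d$.
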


\begin{proof}
Since the $c$-equation in \eqref{eq:oneway} is decoupled and satisfies $\partial_c\mathcal Q<0$, its linearization contributes only strictly negative mode-wise eigenvalues. 
Therefore mode-wise stability is determined by the $(S,R)$ subsystem. 
For each Neumann Laplacian eigenvalue $\mu=\lambda_k>0$, the dispersion matrix is
\[
\mathcal{A}(\mu)=\mathcal{J}_F-\mu
\begin{pmatrix}
d_S & 0\\
0 & d_R
\end{pmatrix}.
\]
We have $\mathrm{tr}(\mathcal{A}(\mu))=(a+d)-\mu(d_S+d_R)<0$ for all $\mu>0$. Moreover,
\[
\det(\mathcal{A}(\mu))
=(a-\mu d_S)(d-\mu d_R)-bc
= d_Sd_R\mu^2-(d\,d_S+a\,d_R)\mu+(ad-bc).
\]
Let $q(\mu):=\det(\mathcal{A}(\mu))$. Since $d_Sd_R>0$ and $q(0)=ad-bc>0$, it suffices to ensure $q(\mu)>0$ for all $\mu\ge 0$. 
If (i) holds, then the linear term in $q$ is nonnegative and hence $q(\mu)\ge q(0)>0$ for all $\mu\ge0$. 
If (ii) holds, then the discriminant of $q$ is negative and $q$ has no real roots; since $q(0)>0$ and $q$ is convex, it follows that $q(\mu)>0$ for all $\mu\ge0$. 
In either case, the Routh--Hurwitz criterion yields $\Re\lambda(\mu)<0$ for all $\mu>0$.

Finally, if $d_S=d_R=d$, then $\mathcal{A}(\mu)=\mathcal{J}_F-\mu d\,I$ and the spectrum satisfies $\lambda(\mu)=\lambda(0)-\mu d$ mode-wise. 
Thus all non-constant modes are strictly more stable than the homogeneous mode, and no diffusion-driven bifurcation can occur.
\end{proof}

\begin{corollary}[\textbf{Application to the unidirectional coupling system with logistic damping}]
System~\eqref{eq:oneway_logis_damping} is a concrete example of system~\eqref{eq:oneway}. 
Therefore the conclusions of Theorem~\ref{thm:oneway} apply to \eqref{eq:oneway_logis_damping} with the Jacobian evaluated for the damped kinetics. 
Specifically, define
\[
\tilde f_S(S,R)=\lambda_S S\Bigl(1-\frac{S+R}{K}\Bigr)+f_S(S,R),
\qquad
\tilde f_R(S,R)=\lambda_R R\Bigl(1-\frac{S+R}{K}\Bigr)+f_R(S,R),
\]
and let $\mathcal{J}_F$ in Theorem~\ref{thm:oneway} be computed using $\tilde f_S,\tilde f_R$ in place of $f_S,f_R$.
\end{corollary}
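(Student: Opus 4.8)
The plan is to recognize that the Corollary is a structural reduction rather than a new estimate: System~\eqref{eq:oneway_logis_damping} coincides with the abstract template \eqref{eq:oneway} after the logistic crowding terms are absorbed into the reaction field. The transport structure of the two systems is literally identical --- the same diffusion coefficients $d_S,d_R$, the same chemotactic fluxes $-\chi_S\nabla\cdot(S\nabla c)$ and $-\chi_R\nabla\cdot(R\nabla c)$, and the same decoupled relaxing signal equation $\partial_t c=d_c\Delta c+\mathcal Q(c)$ with $\partial_c\mathcal Q<0$. The only difference is that \eqref{eq:oneway_logis_damping} carries the extra terms $\lambda_S S(1-(S+R)/K)$ and $\lambda_R R(1-(S+R)/K)$; since these depend only on $(S,R)$ and not on $c$, they can be folded into the kinetics without touching the transport operator.

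First I would define the combined kinetics $\tilde f_S,\tilde f_R$ exactly as in the statement and check that, with this substitution, \eqref{eq:oneway_logis_damping} is written verbatim in the form \eqref{eq:oneway}. I would record that $\tilde f_S,\tilde f_R\in C^1$, since the logistic contributions are polynomials and $f_S,f_R$ are assumed $C^1$, so that the Jacobian $\mathcal J_F$ of $\tilde F=(\tilde f_S,\tilde f_R)$ at the interior equilibrium $(S^*,R^*)$ is well-defined. Next I would invoke Theorem~\ref{thm:oneway} with $f_S,f_R$ replaced by $\tilde f_S,\tilde f_R$: its hypotheses --- existence of a unique interior equilibrium $\mathbf u^*=(S^*,R^*,c^*)$ together with $\mathrm{tr}\,\mathcal J_F=a+d<0$ and $\det\mathcal J_F=ad-bc>0$ --- are simply reimposed on the combined field $\tilde F$. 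Because the linearized transport operator is unchanged, the dispersion matrix $\mathcal A(\mu)=\mathcal J_F-\mu\,\mathrm{diag}(d_S,d_R)$ retains exactly the same form, so the stability conclusions (i)--(ii) and the equal-diffusion case $d_S=d_R$ transfer without modification.

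Since this is a reduction, there is no substantive analytic obstacle; the only point requiring care is bookkeeping. I must ensure the entries $a,b,c,d$ of Theorem~\ref{thm:oneway} are consistently reinterpreted as partials of $\tilde f_S,\tilde f_R$ --- now including the logistic contributions such as $\partial_S[\lambda_S S(1-(S+R)/K)]$ --- rather than of the bare $f_S,f_R$, and that the decoupling of the $c$-equation, which is precisely what reduces mode-wise stability to the $(S,R)$ block, is preserved. The latter is immediate because the signal dynamics are identical in the two systems, so the argument closes at once upon applying the theorem.
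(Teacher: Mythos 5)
Your proposal is correct and matches the paper's intent exactly: the paper offers no separate proof for this corollary precisely because it is the absorption argument you spell out --- fold the logistic terms into the kinetics via $\tilde f_S,\tilde f_R$, observe that the transport structure (diffusion, taxis fluxes, and the decoupled relaxing $c$-equation) is untouched, and apply Theorem~\ref{thm:oneway} with the hypotheses reimposed on the combined Jacobian. Your added bookkeeping (checking $\tilde f_S,\tilde f_R\in C^1$ and reinterpreting the entries $a,b,c,d$ to include the logistic partials) is exactly the right level of care and introduces no gap.
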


\begin{corollary}[\textbf{Application to the tumor--drug--stroma unidirectional model}]
\label{cor:oneway_damping}
Consider the unidirectional chemotaxis extension \eqref{eq:oneway_damping}, with $(I,P,F_a)$ evolving as in \eqref{eq:system} and with the signal $c$ modeled as a decaying open-loop field (i.e.\ $c$ receives no feedback from $(S,R)$). 
Assume the homogeneous equilibrium is linearly stable for the corresponding reduced $(S,R)$ subsystem. 
Then the homogeneous equilibrium cannot undergo diffusion-driven destabilization induced by the unidirectional chemotactic terms: unidirectional sensing does not generate Turing-type patterns from homogeneity.
\end{corollary}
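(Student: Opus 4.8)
The plan is to treat \Cref{cor:oneway_damping} as a direct application of \Cref{thm:oneway}, with the substantive work being the structural reduction that isolates the $(S,R)$ dispersion block from the auxiliary drug, stromal, and signal dynamics. First I would invoke \Cref{prop:IPA_bounds}(i) to note that $I(\cdot,t)\to 0$ exponentially, so that $\delta(I)\to 0$ and $\phi(I)\to 0$; the organizing long-time equilibrium is therefore the post-washout state with $I^*=0$, $(S^*,R^*)=\mathbb{K}^*$, constant stromal pool $P^*+F_a^*=P_T$, and spatially constant signal $c^*$. At this equilibrium the reduced $(S,R)$ kinetics coincide with \eqref{eq:dynamical}, whose Jacobian is the matrix $\mathcal{J}$ analyzed in \Cref{thm:turing}; this is exactly the stable homogeneous kinetics hypothesized in the corollary.

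Next I would linearize the full unidirectional system \eqref{eq:oneway_damping} about this homogeneous equilibrium and pass to a Neumann Laplacian eigenmode with eigenvalue $\mu=\lambda_k>0$. The essential observation is that the chemotactic flux linearizes cleanly because $\nabla c^*=0$ at a homogeneous state: writing $S=S^*+s$ and $c=c^*+\gamma$, one has $-\chi_S\nabla\cdot(S\nabla c)=-\chi_S\nabla\cdot\bigl((S^*+s)\nabla\gamma\bigr)$, and since $\nabla\cdot(s\nabla\gamma)$ is quadratic in the perturbations it drops out, leaving only $-\chi_S S^*\Delta\gamma$, which in mode $\mu$ becomes $+\chi_S S^*\mu\,\gamma$. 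Thus the taxis term contributes a one-directional coupling that feeds the signal perturbation into the $(S,R)$ equations but introduces no dependence of $\gamma$ on $(s,r)$, precisely because $c$ is open-loop.

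Consequently, the mode-$\mu$ linearized operator is block upper triangular. I would organize the blocks as in \Cref{rmk:block_structure}: the decoupled drug block contributes $-\gamma_I-d_I\mu<0$; the signal block contributes $\partial_c\mathcal{Q}-d_c\mu<0$ by the relaxation hypothesis $\partial_c\mathcal{Q}<0$; the stromal $(P,F_a)$ block has spectrum $\{0,-\beta\}$, where the zero eigenvalue is the conserved pool $P+F_a$ and generates no spatial growth since $P,F_a$ do not diffuse; and the remaining diagonal block is exactly the dispersion matrix $\mathcal{A}(\mu)=\mathcal{J}_F-\mu\operatorname{diag}(d_S,d_R)$ from \Cref{thm:oneway}. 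Because the spectrum of a block-triangular matrix is the union of the diagonal-block spectra, the chemotactic coupling (lying strictly above the diagonal) cannot move any eigenvalue, and the stability of every non-constant mode reduces to the $(S,R)$ block, which is handled by \Cref{thm:oneway} (equivalently \Cref{thm:turing} for the drug-free kinetics) under the stated stability hypothesis $a+d<0$, $ad-bc>0$.

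The step I expect to require the most care is the justification that linearized stability of the limiting autonomous system governs the actual time-dependent dynamics: the full system is non-autonomous because $I(t)$, and hence $\delta(I)$ and $\phi(I)$, decay in time rather than being identically zero. I would control this via the washout reduction detailed in the appendix referenced at \eqref{eq:oneway_damping}, arguing that the exponentially small, time-dependent perturbation of the coefficients cannot convert the strictly negative limiting spectrum into genuine growth, so that any amplification driven by the open-loop taxis term is at most \emph{transient focusing} rather than a true Turing bifurcation. The algebra of the block decoupling is otherwise routine; the conceptual content is entirely in the observation that open-loop coupling produces a strictly triangular (non-feedback) chemotactic term, leaving the stable $(S,R)$ dispersion relation untouched.
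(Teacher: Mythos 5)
Your proposal is correct and takes essentially the same route as the paper's proof in Appendix~\ref{app:proof_corollary_oneway_damping}: linearize at the homogeneous equilibrium, note that $\nabla c^*=0$ reduces the taxis term to a pure forcing of the signal mode $\hat c_k$ into $(s_k,r_k)$, conclude that the mode-wise linearization is block triangular so its spectrum is the union of the diagonal-block spectra (drug, signal, stromal, and $(S,R)$ blocks), and reduce stability to the dispersion matrix $\mathcal A_{SR}(\mu)$ handled by \Cref{thm:oneway}. The only differences are cosmetic: the paper linearizes at a general $I^*$ (with forcing coefficients $\Gamma_S,\Gamma_R$) before specializing to washout, whereas you specialize immediately, and your closing concern about non-autonomy is not actually needed for the corollary as stated, since the only homogeneous equilibrium of the drug equation is $I^*=0$ and the linearization there (with $i_k$ included as a state variable) is autonomous.
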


\begin{remark}
Corollary~\ref{cor:oneway_damping} is the mechanism-level statement used in the remainder of the paper. 
The explicit coefficient computations for the concrete model \eqref{eq:oneway_damping} (including the fully expanded dispersion relation) are lengthy and are therefore deferred to Appendix~\ref{app:proof_corollary_oneway_damping}. 
The essential point is that in one-way coupling the signal landscape is not reinforced by the migrating populations, so the feedback loop required for diffusion-driven bifurcation is absent.
\end{remark}

\subsection{Bidirectional feedback: effective mobility and strong parabolicity}
\label{subsec:twoway_parabolicity}

We now turn to bidirectional coupling, where the guiding signal depends on the populations and therefore closes a feedback loop. 
In this setting, the chemotactic drift alters the transport operator itself and can create instability even when the homogeneous kinetics are stable. 
A key point---both analytically and for interpretation---is that the high-frequency behavior of the linearized problem is governed by an effective mobility matrix. 
This provides a clean separation between well-posed finite-wavelength (Turing-type) instability and Hadamard instability/aggregation arising from loss of parabolic smoothing.

We work with the bidirectional system \eqref{eq:twoway},
and assume it admits an interior equilibrium $\mathbf{u}^*=(S^*,R^*)$ satisfying
\[
f_S(S^*,R^*)=0,\qquad f_R(S^*,R^*)=0.
\]
Let $\mathcal{J}_F$ denote the Jacobian of $F=(f_S,f_R)$ at $\mathbf{u}^*$:
\[
\mathcal{J}_F=
\begin{pmatrix}
a & b\\
c & d
\end{pmatrix},
\qquad
a=\partial_S f_S(\mathbf{u}^*),\ 
b=\partial_R f_S(\mathbf{u}^*),\ 
c=\partial_S f_R(\mathbf{u}^*),\ 
d=\partial_R f_R(\mathbf{u}^*).
\]
Linearizing the chemotactic fluxes at $\mathbf{u}^*$ yields the feedback matrix
\[
\mathcal{H}(\mathbf{u}^*)=
\begin{pmatrix}
\chi_S\,\partial_S g(\mathbf{u}^*)\,S^* & \chi_S\,\partial_R g(\mathbf{u}^*)\,S^*\\[2pt]
\chi_R\,\partial_S g(\mathbf{u}^*)\,R^* & \chi_R\,\partial_R g(\mathbf{u}^*)\,R^*
\end{pmatrix}.
\]
Writing $\tilde{\mathbf{u}}=(S-S^*,\,R-R^*)^{\mathsf T}$, the linearization takes the form
\[
\partial_t \tilde{\mathbf{u}}
= (\mathcal{D}_{\mathrm{diff}}-\mathcal{H}(\mathbf{u}^*))\,\Delta \tilde{\mathbf{u}} + \mathcal{J}_F\tilde{\mathbf{u}},
\qquad 
\mathcal{D}_{\mathrm{diff}}=\begin{pmatrix} d_S & 0\\ 0 & d_R\end{pmatrix}.
\]
This motivates the definition of the effective mobility matrix
\begin{equation}
\label{eq:effective_mobility}
M:=\mathcal{D}_{\mathrm{diff}}-\mathcal{H}(\mathbf{u}^*)
=
\begin{pmatrix}
d_S-\chi_S\partial_S g(\mathbf{u}^*)S^* & -\chi_S\partial_R g(\mathbf{u}^*)S^*\\[2pt]
-\chi_R\partial_S g(\mathbf{u}^*)R^* & d_R-\chi_R\partial_R g(\mathbf{u}^*)R^*
\end{pmatrix}.
\end{equation}
For each Neumann Laplacian eigenvalue $\mu=\lambda_k\ge 0$, the mode-wise dispersion matrix is
\begin{equation}\label{eq:dispersion_matrix}
\mathcal{A}(\mu)=\mathcal{J}_F-\mu M,
\end{equation}
so that the growth rates satisfy $\det(\lambda I-\mathcal{A}(\mu))=0$. 
In particular, the asymptotic behavior as $\mu\to\infty$ is governed by the principal part $-\mu M$.

To correctly distinguish finite-wavelength pattern selection from high-frequency breakdown, we explicitly isolate the regime in which the linearized diffusion operator is uniformly parabolic.

\begin{assumption}[\textbf{Strong parabolicity of the effective mobility}]
\label{ass:strong_parabolicity}
There exists a constant $m_0>0$ such that the symmetric part of $M$ satisfies
\[
\mathrm{sym}(M):=\frac{M+M^{\mathsf T}}{2}\ \ge\ m_0 I
\qquad\text{(as quadratic forms on }\mathbb{R}^2\text{)}.
\]
Equivalently,
\[
\xi^{\mathsf T}\mathrm{sym}(M)\,\xi \ge m_0|\xi|^2\qquad \forall\,\xi\in\mathbb{R}^2.
\]
\end{assumption}

\begin{remark}[\textbf{Sufficient checks in the $2\times 2$ case}]
\label{rmk:parabolicity_conditions}
Assumption~\ref{ass:strong_parabolicity} is a standard sufficient condition ensuring that the operator associated with $M$ generates a well-posed forward-in-time evolution (high spatial frequencies are damped). 
Since $\mathrm{sym}(M)$ is symmetric, a convenient sufficient condition is positivity of its trace and determinant:
\[
\mathrm{tr}(\mathrm{sym}(M))>0,
\qquad 
\det(\mathrm{sym}(M))>0.
\]
When $M$ is symmetric (or approximately symmetric), strong parabolicity reduces to positive definiteness of $M$, which in the $2\times 2$ case can be checked by
\[
\mathrm{tr}(M)>0,
\qquad 
\det(M)>0.
\]
If Assumption~\ref{ass:strong_parabolicity} fails, i.e.\ $\mathrm{sym}(M)$ has a nonpositive eigenvalue, then high-frequency modes need not be damped. 
In that case, the dispersion relation associated with \eqref{eq:dispersion_matrix} may exhibit growth that increases without bound as $\mu\to\infty$, signaling Hadamard instability/aggregation rather than finite-wavelength Turing pattern selection.
\end{remark}

\subsection{Bidirectional feedback: stable, finite-band, and ill-posed regimes}
\label{subsec:twoway_classification}

We now state the instability classification for bidirectional feedback systems. 
The distinction hinges on the effective mobility matrix $M=\mathcal{D}_{\mathrm{diff}}-\mathcal{H}(\mathbf{u}^*)$ introduced in \eqref{eq:effective_mobility}. 
When $M$ is strongly parabolic (Assumption~\ref{ass:strong_parabolicity}), high-frequency modes are damped and any instability can only occur on a bounded band of intermediate wavelengths (finite-wavelength selection). 
When strong parabolicity fails, the linearized problem may exhibit Hadamard-type high-frequency amplification, corresponding to aggregation/collapse rather than finite-band Turing pattern selection.

\begin{theorem}[\textbf{Bidirectional feedback: stable, Turing-type, and aggregation/ill-posed regimes}]
\label{thm:twoway_classification}
Consider the bidirectional system \eqref{eq:twoway} on a bounded smooth domain $U$ under homogeneous Neumann boundary conditions. 
Let $\mathbf{u}^*=(S^*,R^*)$ be an interior equilibrium. 
Let $\mathcal{J}_F$ be the reaction Jacobian at $\mathbf{u}^*$ and let $M=\mathcal{D}_{\mathrm{diff}}-\mathcal{H}(\mathbf{u}^*)$ be the effective mobility matrix, as in \eqref{eq:effective_mobility}. 
For each Neumann Laplacian eigenvalue $\mu=\lambda_k\ge 0$, define the mode-wise dispersion matrix
\[
\mathcal{A}(\mu)=\mathcal{J}_F-\mu M,
\]
so that growth rates satisfy $\det(\lambda I-\mathcal{A}(\mu))=0$. 
Assume the homogeneous kinetics are stable:
\begin{equation}
\label{eq:kinetic_stability}
\mathrm{tr}(\mathcal{J}_F)<0,\qquad \det(\mathcal{J}_F)>0.
\end{equation}
Then the parameter space decomposes into the following regimes:
\begin{enumerate}[label=(\roman*)]
\item \textbf{Stable (well-posed and mode-wise stable).}
If Assumption~\ref{ass:strong_parabolicity} holds and, in addition,
\begin{equation}
\label{eq:modewise_stability}
\mathrm{tr}(\mathcal{A}(\mu))<0 \ \text{ and }\ \det(\mathcal{A}(\mu))>0
\qquad \text{for all }\mu\ge 0,
\end{equation}
then $\Re\lambda(\mu)<0$ for all modes $\mu\ge 0$. 
In particular, $\mathbf{u}^*$ is linearly stable and no diffusion-driven instability occurs.

\item \textbf{Finite-band instability (well-posed with finite-wavelength selection).}
Assume Assumption~\ref{ass:strong_parabolicity} holds. 
If there exists a mode $\mu_* >0$ such that either
\begin{equation}
\label{eq:turing_condition_general}
\mathrm{tr}(\mathcal{A}(\mu_*))>0
\qquad\text{or}\qquad
\det(\mathcal{A}(\mu_*))<0,
\end{equation}
then $\mathbf{u}^*$ is linearly unstable to spatial perturbations. 
Moreover, due to strong parabolicity, high-frequency modes are damped: there exists $\mu_1>0$ such that $\Re\lambda(\mu)<0$ for all $\mu\ge \mu_1$. 
Consequently, instability (if present) occurs on a bounded set of intermediate wavelengths, corresponding to finite-wavelength (Turing-type) pattern generation.

\item \textbf{Aggregation / ill-posedness (loss of parabolic smoothing).}
If Assumption~\ref{ass:strong_parabolicity} fails and $\mathrm{sym}(M)$ has a negative eigenvalue, then the principal part of the linearized operator contains a direction of anti-diffusion. 
In this case the dispersion relation exhibits Hadamard instability in the sense that
\[
\sup_{\mu>0}\Re\lambda(\mu)=+\infty,
\]
and the model must be interpreted as an aggregation/collapse limit of the unregularized continuum description \citep{joseph_hadamard_1990}. 
This regime should not be interpreted as finite-wavelength Turing pattern selection; rather it indicates that additional physical regularization (e.g.\ volume filling or flux limitation) is required for biological realism.
\end{enumerate}
\end{theorem}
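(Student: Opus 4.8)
The proof decomposes along the three regimes, and in each case the engine is the Routh--Hurwitz criterion applied to the $2\times 2$ dispersion matrix $\mathcal{A}(\mu)=\mathcal{J}_F-\mu M$ from \eqref{eq:dispersion_matrix}, supplemented in regimes (ii)--(iii) by a Rayleigh-quotient estimate that controls the $\mu\to\infty$ behavior. For regime (i) I would simply invoke Routh--Hurwitz: a real $2\times 2$ matrix has both eigenvalues in the open left half-plane precisely when its trace is negative and its determinant positive. Since \eqref{eq:modewise_stability} asserts $\mathrm{tr}(\mathcal{A}(\mu))<0$ and $\det(\mathcal{A}(\mu))>0$ for every $\mu\ge 0$, mode-wise stability is immediate, and since the Neumann Laplacian spectrum is nonnegative, $\mathbf{u}^*$ is linearly stable with no growing mode.

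For regime (ii) there are two independent claims. First, instability at $\mu_*$: if $\mathrm{tr}(\mathcal{A}(\mu_*))>0$ the two eigenvalues sum to a positive number, so at least one has positive real part; if instead $\det(\mathcal{A}(\mu_*))<0$, the eigenvalues of the real matrix $\mathcal{A}(\mu_*)$ cannot form a complex-conjugate pair (whose product would be $\ge 0$), hence they are real of opposite sign and one is positive. Either branch of \eqref{eq:turing_condition_general} thus produces a growing mode. Second---and this is where Assumption~\ref{ass:strong_parabolicity} enters---I would bound $\Re\lambda(\mu)$ for large $\mu$ to confine the unstable set. For any eigenpair $(\lambda,v)$ with $|v|=1$, writing $\lambda=v^{*}\mathcal{A}(\mu)v$ and taking real parts gives $\Re\lambda=v^{*}\mathrm{sym}(\mathcal{J}_F)v-\mu\,v^{*}\mathrm{sym}(M)v$. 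A real symmetric form bounded below by $m_0$ on $\mathbb{R}^2$ is bounded below by the same constant on $\mathbb{C}^2$ (since $v^{*}Av=(\Re v)^{\mathsf T}A(\Re v)+(\Im v)^{\mathsf T}A(\Im v)$ for real symmetric $A$), so strong parabolicity yields $v^{*}\mathrm{sym}(M)v\ge m_0$ and hence $\Re\lambda\le \lambda_{\max}(\mathrm{sym}(\mathcal{J}_F))-m_0\mu$. Taking $\mu_1=\max\{0,\lambda_{\max}(\mathrm{sym}(\mathcal{J}_F))\}/m_0$ forces $\Re\lambda(\mu)<0$ for all $\mu>\mu_1$, so any instability lives on a bounded band of intermediate wavelengths.

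For regime (iii) the same Rayleigh identity is run in reverse. If $\mathrm{sym}(M)$ has a negative eigenvalue, fix a real unit vector $\xi_0$ with $\xi_0^{\mathsf T}\mathrm{sym}(M)\xi_0=-c<0$; then $\xi_0^{\mathsf T}\mathrm{sym}(\mathcal{A}(\mu))\xi_0=\xi_0^{\mathsf T}\mathrm{sym}(\mathcal{J}_F)\xi_0+c\mu\to+\infty$, so the numerical abscissa $\lambda_{\max}(\mathrm{sym}(\mathcal{A}(\mu)))$ grows linearly in $\mu$. Via the energy identity $\tfrac{d}{dt}\tfrac12|\tilde{\mathbf u}|^2=\langle\mathcal{A}(\mu)\tilde{\mathbf u},\tilde{\mathbf u}\rangle$, perturbations aligned with $\xi_0$ grow at instantaneous rate $\sim c\mu$, so the mode solution operators $e^{\mathcal{A}(\mu)t}$ are unbounded as $\mu\to\infty$ for every fixed $t>0$: the linearized initial-value problem loses continuous dependence on data and is Hadamard ill-posed, which is the content of the stated $\sup_{\mu>0}\Re\lambda(\mu)=+\infty$. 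I would remark that, because $M$ need not be symmetric, the blow-up is cleanest when read from the numerical abscissa (equivalently the energy growth), with the literal spectral-abscissa statement following whenever $M$ itself has an eigenvalue of negative real part (e.g.\ $\det(M)<0$ or $\mathrm{tr}(M)<0$) through the asymptotics $\lambda(\mu)=-\mu\,\mathrm{spec}(M)+O(1)$.

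The main obstacle is precisely the $\mu\to\infty$ analysis in (ii) and (iii): since $M$ is generally nonsymmetric, one cannot read stability off the eigenvalues of $M$ and must instead route everything through $\mathrm{sym}(M)$ and the numerical abscissa. The delicate points are transferring the coercivity constant $m_0$ from real to complex test vectors so the high-frequency damping bound in (ii) is genuinely uniform, and, in (iii), correctly identifying the physically decisive growth as that of the energy/semigroup rather than the spectral abscissa, so that the ill-posedness conclusion is robust to the antisymmetric part of $M$.
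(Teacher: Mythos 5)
Your treatment of regimes (i) and (ii) is correct and in fact sharper than the paper's own proof, which is only a sketch: the paper handles (i) with the same Routh--Hurwitz observation, and for (ii) merely asserts that ``the principal part $-\mu M$ dominates as $\mu\to\infty$.'' Your numerical-abscissa bound $\Re\lambda(\mu)\le \lambda_{\max}(\mathrm{sym}(\mathcal{J}_F))-m_0\mu$, including the transfer of the real coercivity constant to complex eigenvectors via $v^{*}Av=(\Re v)^{\mathsf T}A(\Re v)+(\Im v)^{\mathsf T}A(\Im v)$ for real symmetric $A$, is exactly the quantitative step the paper omits, and your explicit $\mu_1$ makes the finite-band confinement rigorous. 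Your two-case instability argument at $\mu_*$ (positive trace, or negative determinant forcing real eigenvalues of opposite sign) also matches what the paper leaves implicit.

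In regime (iii), however, your bridge from energy growth to ill-posedness is false, and the issue is worth spelling out because it is the same leap the paper's sketch makes. Instantaneous energy growth at rate $c\mu$ along $\xi_0$ does \emph{not} imply that $e^{\mathcal{A}(\mu)t}$ is unbounded in $\mu$ for fixed $t>0$, nor that continuous dependence fails: the antisymmetric part of $M$ can rotate the solution out of the growing direction before any net amplification accumulates. Concretely, take
\[
M=\begin{pmatrix}1 & 5\\ -1 & 1\end{pmatrix},
\qquad
\mathrm{sym}(M)=\begin{pmatrix}1 & 2\\ 2 & 1\end{pmatrix},
\qquad
\mathcal{J}_F=-I .
\]
Then $\mathrm{sym}(M)$ has eigenvalues $3$ and $-1$ (so the hypothesis of regime (iii) holds and $\mathcal{J}_F$ is kinetically stable), but $M$ itself has eigenvalues $1\pm i\sqrt5$ in the open right half-plane. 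Since $\mathcal{A}(\mu)=-I-\mu M$ commutes with $M$, one has $\|e^{\mathcal{A}(\mu)t}\|=e^{-t}\|e^{-M(\mu t)}\|\le\sup_{s>0}\|e^{-Ms}\|<\infty$ uniformly in $\mu$ and $t$, and $\Re\lambda(\mu)=-1-\mu\to-\infty$. The non-normal transient growth saturates at an $O(1)$ constant, the solution operators remain uniformly bounded, and neither $\sup_{\mu>0}\Re\lambda(\mu)=+\infty$ nor any loss of continuous dependence occurs. So indefiniteness of $\mathrm{sym}(M)$ alone---the stated hypothesis---does not yield the stated conclusion; your own closing hedge is in fact the correct statement: the spectral blow-up $\sup_{\mu>0}\Re\lambda(\mu)=+\infty$ holds precisely when $M$ has an eigenvalue with negative real part (e.g.\ $\det(M)<0$ or $\mathrm{tr}(M)<0$), via $\lambda(\mu)=-\mu\,\mathrm{spec}(M)+O(1)$. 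The paper's sketch commits the same conflation of quadratic-form growth ($\zeta^{\mathsf T}\mathrm{sym}(M)\zeta<0$) with eigenvalue growth, so you have correctly located the weak point of the theorem; but to stand as a proof, your regime (iii) must either adopt the stronger spectral hypothesis on $M$, or downgrade the conclusion from Hadamard ill-posedness to failure of the $L^2$ energy estimate (loss of G\r{a}rding-type coercivity), which is all that $\mathrm{sym}(M)\not\succ0$ actually delivers.
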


\begin{proof}[Proof sketch]
The statement follows from the $2\times 2$ dispersion relation associated with $\mathcal{A}(\mu)=\mathcal{J}_F-\mu M$. 
Under \eqref{eq:kinetic_stability}, the homogeneous equilibrium is stable at $\mu=0$. 
If \eqref{eq:modewise_stability} holds for all $\mu\ge 0$, then the Routh--Hurwitz conditions imply $\Re\lambda(\mu)<0$ for every mode, giving (i).

Assume next that Assumption~\ref{ass:strong_parabolicity} holds. 
Then the principal part $-\mu M$ dominates as $\mu\to\infty$ and yields $\Re\lambda(\mu)\to -\infty$, so any instability must occur on a bounded interval of $\mu$; condition \eqref{eq:turing_condition_general} guarantees the existence of an unstable mode, giving (ii).

If $\mathrm{sym}(M)$ has a negative eigenvalue, then there exists a direction $\zeta\in\mathbb{R}^2$ such that $\zeta^{\mathsf T}\mathrm{sym}(M)\zeta<0$. 
Along the corresponding high-frequency modes, the principal contribution $-\mu M$ produces growth of order $+\mu$, hence $\sup_{\mu>0}\Re\lambda(\mu)=+\infty$, giving (iii).
\end{proof}

\begin{remark}[\textbf{Interpretation of regimes}]
Regime (ii) corresponds to finite-wavelength selection: instability occurs only on an intermediate band of modes, while high frequencies remain damped by parabolic smoothing. 
By contrast, regime (iii) corresponds to Hadamard-type high-frequency amplification, in which the linearized growth rate can increase without bound as $\mu\to\infty$. 
The latter is an aggregation/collapse signal for the unregularized continuum model and should be interpreted as a prompt to introduce biologically meaningful regularization (e.g.\ flux limitation) rather than as classical Turing pattern formation. 
\Cref{fig:dispersion_classification} provide representative dispersion curves consistent with Theorem~\ref{thm:twoway_classification}. The classification result is parallel to the algebraic--spectral separation between different stability properties and bifurcation behaviors \citep{wang_algebraicspectral_2026}.
\end{remark}

\begin{figure}
    \centering
    \includegraphics[width=0.75\linewidth]{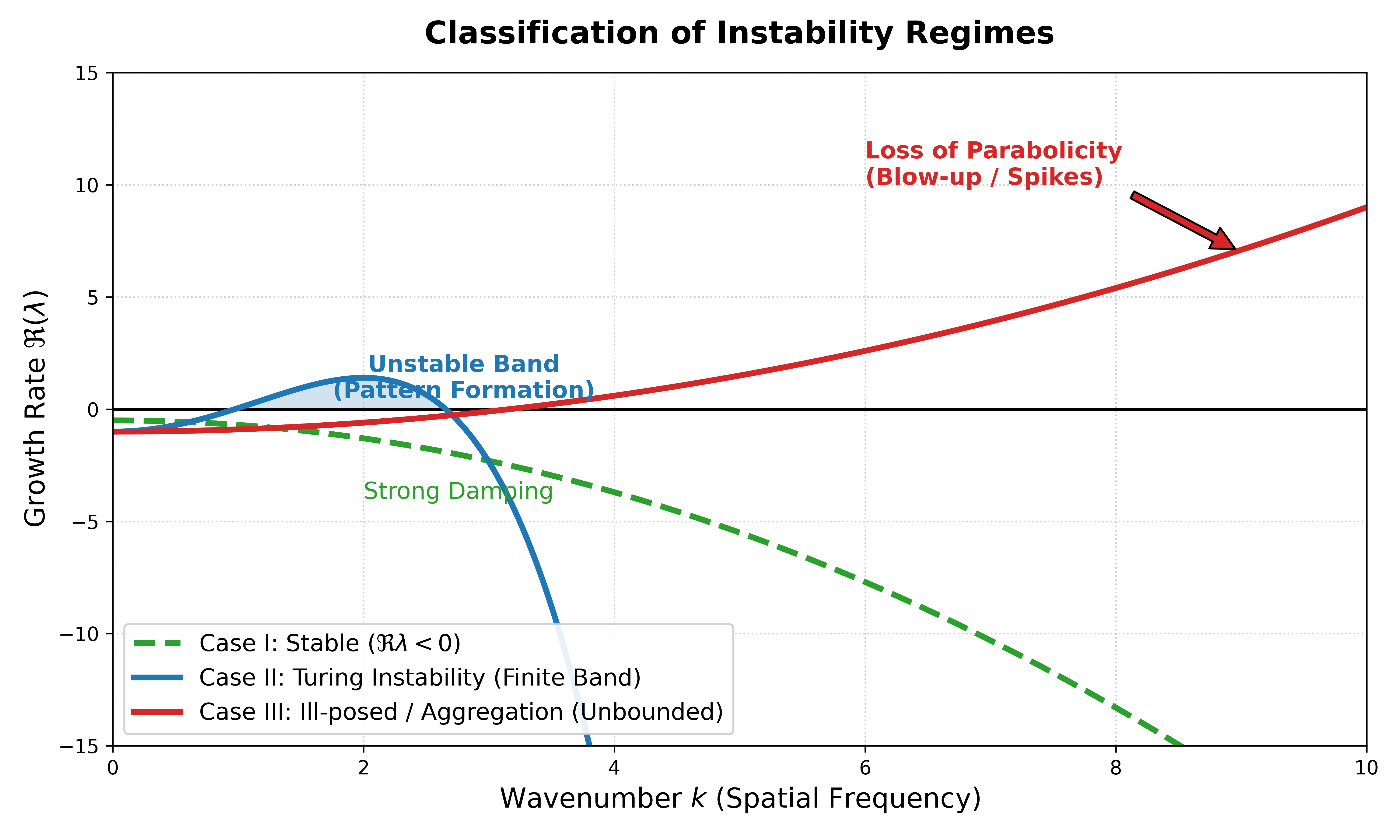}
    \caption{\textbf{Classification of stability regimes (\Cref{thm:twoway_classification}).} Plot of growth rate $\Re(\lambda)$ vs.\ wavenumber $k$. \textbf{Case I (green):} unconditional stability. \textbf{Case II (blue):} well-posed finite-band (Turing-type) instability. \textbf{Case III (red):} loss of parabolicity (growth rate diverges), indicating aggregation/ill-posedness.}
    \label{fig:dispersion_classification}
\end{figure}

\begin{remark}[\textbf{Parabolicity threshold and the need for regularization}]
\label{rmk:regularization}
A convenient diagnostic for the onset of aggregation/ill-posedness is loss of positivity of the diagonal effective diffusion terms in $M$. 
For example, parameter configurations that force
\begin{equation*}
d_S-\chi_S\,\partial_S g(\mathbf{u}^*)\,S^*<0
\qquad\text{and}\qquad
d_R-\chi_R\,\partial_R g(\mathbf{u}^*)\,R^*=0,
\end{equation*}
or
\begin{equation*}
d_S-\chi_S\,\partial_S g(\mathbf{u}^*)\,S^*=0
\qquad\text{and}\qquad
d_R-\chi_R\,\partial_R g(\mathbf{u}^*)\,R^*<0,
\end{equation*}
push the system toward regime (III) and should be interpreted as aggregation/ill-posedness rather than finite-band Turing patterning.

These conditions have a direct physical interpretation: they quantify when feedback-driven drift becomes comparable to, or exceeds, diffusive smoothing. 
In partially decoupled limits, the thresholds
\begin{equation}\label{eq:chemotaxis_threshold}
\chi_S^{c}=\frac{d_S}{\partial_S g(\mathbf{u}^*)\,S^*},
\qquad 
\chi_R^{c}=\frac{d_R}{\partial_R g(\mathbf{u}^*)\,R^*}
\end{equation}
mark the point at which taxis-induced focusing can dominate diffusion.

Importantly, exceeding such a threshold in an unregularized continuum model should not automatically be interpreted as biologically smooth ``pattern formation''. 
When feedback becomes strong enough to violate strong parabolicity of the effective mobility matrix $M=\mathcal{D}_{\mathrm{diff}}-\mathcal{H}(\mathbf{u}^*)$ (regime (III) in \Cref{thm:twoway_classification}), the linearized dispersion relation can exhibit unbounded high-frequency growth, signaling aggregation/collapse (Hadamard-type instability) rather than finite-wavelength Turing banding. 
From the standpoint of tissue physics, this corresponds to an ``over-attractive'' regime where cells (or mass) would concentrate toward delta-like peaks in the absence of additional constraints, which is incompatible with finite cell volume and crowding.
\end{remark}

\section{Numerical experiments: reproducible evidence for the mechanism}
\label{sec:numerical}

\subsection{Numerical scheme and implementation details}
\label{subsec:numerical_scheme}

This section provides reproducible numerical evidence supporting the mechanism classification developed in Section~\ref{sec:chemotaxis}. 
Our goals are to (i) corroborate relaxation toward homogeneity in the base and unidirectional regimes (Regime~I and Regime~I$'$), 
(ii) demonstrate that sustained finite-wavelength structure is recovered only under bidirectional feedback in a well-posed regime (Regime~II), 
and (iii) illustrate numerical breakdown consistent with the aggregation/ill-posed regime when strong parabolicity is violated (Regime~III).

We discretize the coupled system on a uniform Cartesian grid over $U=[0,1]^2$ with mesh size $h$ and time step $\Delta t$. 
For each diffusing component, we use a semi-implicit Crank--Nicolson update for the diffusion term and treat all remaining terms explicitly (reaction and, when present, taxis). 
Homogeneous Neumann boundary conditions are imposed by ghost-point reflection, ensuring zero normal flux at the discrete level.

In the baseline hybrid system \eqref{eq:system}, only $(S,R,I)$ are advanced by the diffusion step. 
In chemotaxis experiments, the taxis term requires a well-defined signal gradient; 
accordingly, the signal variable $c$ used in the taxis flux is evolved by a parabolic equation and the same semi-implicit diffusion update is applied to $c$ as well. 
In the unidirectional experiments we take $\mathcal Q(c)=-\rho_c\,c$, so that $\partial_c\mathcal Q(c)=-\rho_c<0$.
Unless otherwise stated, we take $N_x=N_y=51$ (so $h=1/(N_x-1)$) and $\Delta t=10^{-2}$. 
We use the time horizon $T_{\mathrm{final}}=5$ for Regimes~I and I$'$. 
For the feedback simulations in Regimes~II--III we use a longer horizon $T_{\mathrm{final}}=50$ to allow the feedback-driven structures to develop and saturate.

All simulations are initialized near the relevant homogeneous equilibrium by i.i.d.\ unbiased perturbations
\[
W(x_i,y_j,0)=W^*+\varepsilon\,\rho_{i,j},
\qquad 
\rho_{i,j}\sim\mathcal{U}(-1,1)\ \text{i.i.d.},
\]
applied to each diffusing component present in the model under study.
In particular, for the base system \eqref{eq:system} we perturb $W\in \{S,R,I\}$, 
whereas for chemotaxis extensions we additionally perturb the signal field $c$.
Throughout this section we fix $\varepsilon=10^{-3}$. 
Since $I^*=0$ in the washout equilibrium branch, the perturbation may produce negative values in the drug field; we enforce the physical constraint by truncating $I$ to be nonnegative after each update. 
For numerical robustness, we also truncate any negative values of the density variables $(S,R)$ and the signal $c$ to zero after each explicit update.

For the non-diffusing stromal variables $(P,F_a)$, we enforce the homogeneous pool assumption $P_0(x,y)+F_{a,0}(x,y)\equiv P_T=1$ by setting $(P,F_a)(x,y,0)=(P^*,F_a^*)=(1,0)$.

In Regime~II simulations, we employ a flux-saturated (velocity-limited) taxis to prevent non-physical collapse; the explicit form is given in \Cref{subsec:numerical_bifeedback}. 
Baseline parameter values used in the numerical experiments are listed in \Cref{tab:parameters}. Parameters that are used in a subset of experiments are indicated in the table.

\begin{table}[htbp]
\centering
\caption{Baseline parameter values used in the numerical experiments (units are given in Table~\ref{tab:parameter_units}). Parameters marked ``(uni)'' are used only in the unidirectional chemotaxis experiments; parameters marked ``(fb)'' are used only in the feedback experiments.}
\label{tab:parameters}
\begin{tabular}{@{} l c l c @{}}
\toprule
Parameter        & Value       & Parameter       & Value       \\
\midrule
$d_S$            & $10^{-3}$   & $d_R$           & $10^{-3}$   \\
$d_I$            & $5$         & $d_c$           & $5\times 10^{-3}$ \\
$\lambda_S$      & $0.5$       & $\lambda_R$     & $0.5$       \\
$K$              & $2.0$       & $\alpha$        & $0.1$       \\
$\xi$            & $0.1$       & $\eta$          & $0.2$       \\
$\theta$         & $1$         & $\beta$         & $0.5$       \\
$\gamma_I$       & $1$         & $\rho_c$        & $1.6$       \\
$\delta(I)$      & $\delta_0\,\dfrac{I}{I+K_I}$ 
                & $\phi(I)$ & $\tanh(\kappa_\phi I)$ \\
$\delta_0$       & $0.5$       & $K_I$           & $0.5$       \\
$\kappa_\phi$    & $5$         &                 &             \\
$\chi_S$ (uni)   & $0.5$       & $\chi_R$ (uni)  & $0.5$       \\
$\chi_S'$ (fb)   & $0.5$       & $\kappa_c$ (fb) & $0.8$       \\
$\alpha_c$ (fb)  & $100$       & $\varepsilon$   & $10^{-3}$   \\
\bottomrule
\end{tabular}
\end{table}

\subsection{Base model: relaxation to homogeneity (Regime I)}
\label{subsec:numerical_base_regimeI}

We first corroborate the relaxation-to-homogeneity behavior of the base hybrid model under homogeneous Neumann boundary conditions. 
Starting from unbiased perturbations about the homogeneous equilibrium, diffusion smooths spatial deviations while the damped reaction terms drive trajectories toward the equilibrium branch selected after washout. 
\Cref{fig:homogeneous} shows representative snapshots of $S$ at $t=0,\,2.5,\,5.0$, illustrating progressive homogenization; by $T=5.0$ the remaining spatial variation is at the level of numerical microfluctuations.

To quantify the decay of spatial heterogeneity, we track the $L^2$-deviation norms
\[
E_W(t):=\|W(\cdot,t)-W^*\|_{L^2(U)},
\qquad W\in\{S,R,I\}.
\]
As shown in \Cref{fig:norm_evolution} (blue solid curve for $E_S$), the deviation decays approximately exponentially after a short transient period, consistent with dissipativity and with the absence of diffusion-driven pattern generation in Regime~I.

\begin{figure}[htbp]
\centering
\includegraphics[width=\textwidth]{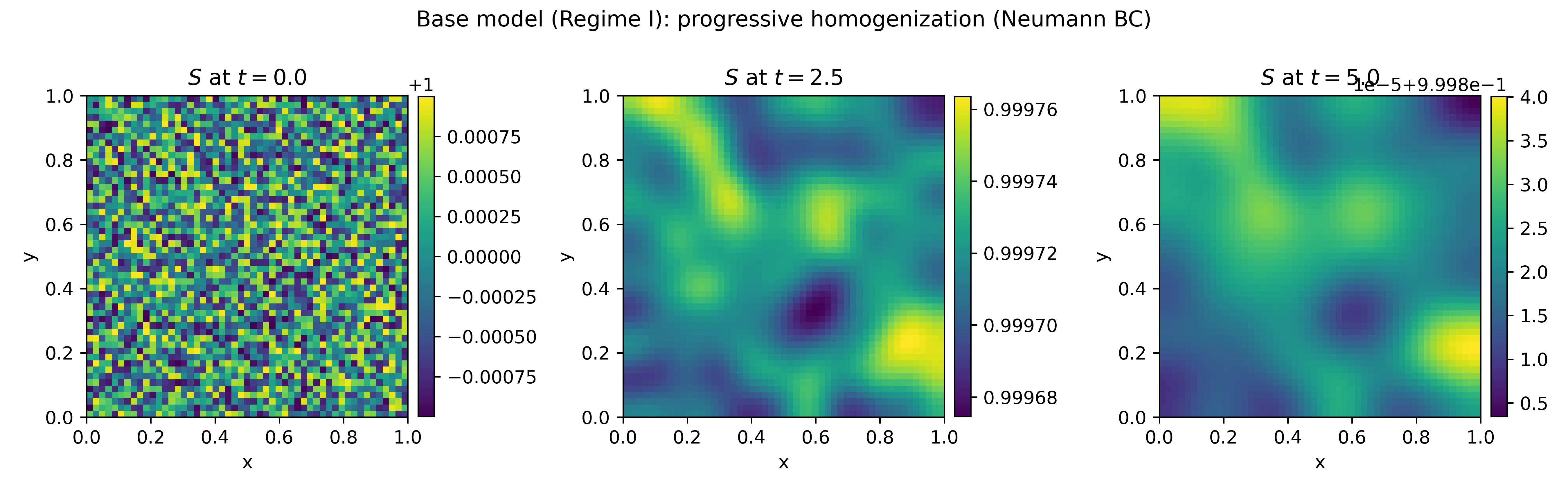}
\caption{\textbf{Base model: progressive homogenization.}
Spatial snapshots of $S$ at $t=0,\,2.5,\,5.0$. Starting from unbiased random perturbations, the field smooths out over time; the colorbar at $t=5.0$ indicates deviations of order $10^{-4}$ relative to the mean.}
\label{fig:homogeneous}
\end{figure}

\begin{figure}[htbp]
\centering
\includegraphics[width=0.7\textwidth]{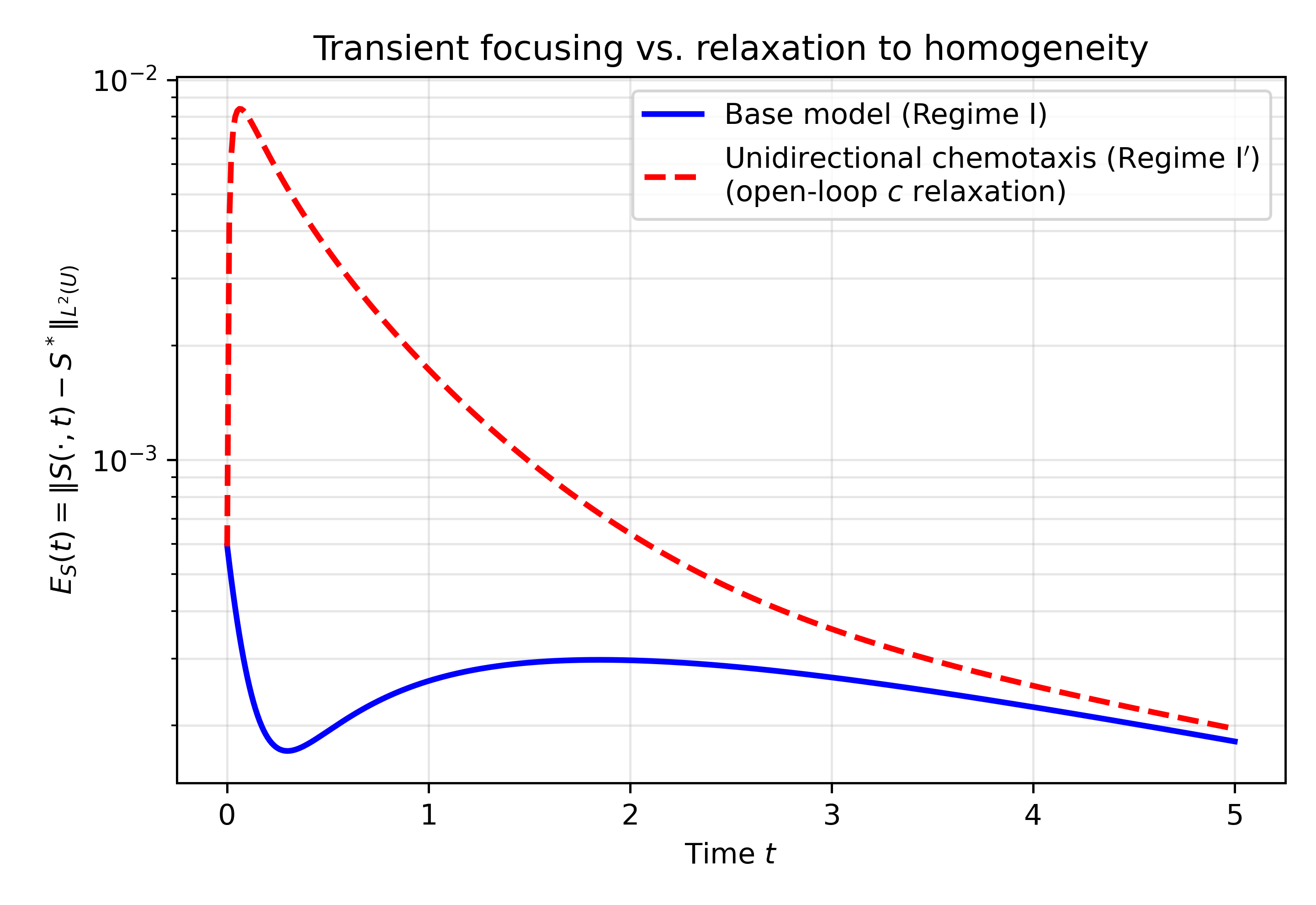}
\caption{\textbf{Quantification of convergence.}
Evolution of the $L^2$-deviation from equilibrium for the base model (blue solid) and the unidirectional chemotaxis model (red dashed). 
The base model exhibits monotone decay after a transient period. 
The unidirectional model shows a transient rise followed by decay, consistent with relaxation rather than sustained pattern generation.}
\label{fig:norm_evolution}
\end{figure}

\subsection{Unidirectional chemotaxis: transient focusing without generation (Regime I$'$)}
\label{subsec:numerical_base_oneway}

We next test the ``no-generation'' prediction for unidirectional coupling (open-loop sensing). 
In this setting, directed motion can transiently sharpen existing heterogeneity by transporting mass toward prescribed signal maxima, but---in the absence of feedback from $(S,R)$ to the signal---it cannot sustain a diffusion-driven bifurcation from a homogeneous equilibrium.
We simulate the unidirectional chemotaxis extension \eqref{eq:oneway_damping} under homogeneous Neumann boundary conditions, using the same initialization protocol as in \Cref{subsec:numerical_scheme}. 
Here $S$ and $R$ drift up gradients of the signal field $c$, while $c$ evolves independently of $(S,R)$ and relaxes in time (we take $\mathcal Q(c)=-\rho_c c$).

The resulting dynamics exhibit a characteristic two-phase response. 
First, a transient increase in the deviation norm $E_S(t)=\|S(\cdot,t)-S^*\|_{L^2(U)}$ may occur as chemotactic drift concentrates mass near local signal maxima. 
Second, as the signal relaxes and the dissipative reaction terms dominate, the system returns toward the homogeneous equilibrium and $E_S(t)$ decays approximately exponentially.
This behavior is shown in \Cref{fig:norm_evolution} (red dashed curve), where the deviation rises briefly and then decreases.

Crucially, no persistent patterned attractor emerges: the transient focusing reflects short-time redistribution along an externally imposed landscape rather than de novo pattern formation from homogeneity.
These simulations therefore support the mechanism statement of \Cref{subsec:mechanism_statement} and are consistent with the unidirectional stability conclusions of \Cref{subsec:oneway}, namely that one-way sensing can amplify perturbations transiently but does not generate sustained diffusion-driven (Turing-type) patterns without closed-loop feedback.

\subsection{Bidirectional feedback: finite-band patterns vs aggregation (Regimes II--III)}
\label{subsec:numerical_bifeedback}

We now demonstrate the central distinction of the directionality--damping principle: sustained finite-wavelength structure is recovered only when a feedback loop closes the signal landscape.
In the unidirectional (open-loop) chemotaxis experiments, the cue evolves independently of $(S,R)$ and relaxes, so chemotaxis may transiently focus mass but cannot generate a persistent patterned attractor from a homogeneous equilibrium.
To create the minimal closed-loop mechanism, we introduce an auxiliary chemoattractant field $c$ that is produced by the susceptible population $S$ and decays in time.

We augment the unidirectional system by coupling $S$ to $c$ through a production--diffusion--decay equation, and we include taxis of $S$ along $\nabla c$:
\begin{align}
\label{eq:feedback_model_c}
\begin{cases}
\partial_t S
= d_S \Delta S
- \chi_S' \nabla \cdot (S \nabla c)
+ \lambda_S S\left(1 - \tfrac{S + R}{K}\right)
- \alpha S - \delta(I) S + \xi(1 - \phi(I)) R,
\\[4pt]
\partial_t c
= d_c \Delta c + \kappa_c S - \rho_c c,
\end{cases}
\end{align}
while keeping $(R,I,P,F_a)$ as in the base system \eqref{eq:system}.
Here $\kappa_c$ and $\rho_c$ are the production and decay rates of $c$, and $\chi_S'$ is the taxis sensitivity of $S$ to $c$.
This closes the positive feedback loop 
\[
S\to c\to \nabla c \to \text{ drift of } S.
\]
For the homogeneous equilibrium of \eqref{eq:feedback_model_c}, the signal component satisfies $c^*=(\kappa_c/\rho_c)\,S^*$.

\subsubsection{Regime II (well-posed finite-band patterns): flux saturation as a physical regularization}
To isolate finite-wavelength selection while preventing non-physical collapse at large focusing strength, we implement a flux-saturated (velocity-limited) taxis in the simulations underlying \Cref{fig:chemotaxis_attractant}.
Concretely, in \eqref{eq:feedback_model_c} we replace the linear taxis flux by
\begin{equation}
\label{eq:flux_saturation_c}
-\chi_S'\nabla\cdot(S\nabla c)
\quad\leadsto\quad
-\nabla\cdot\!\left(\chi_S' \, S\, \frac{\nabla c}{1+\alpha_c\lvert \nabla c\rvert}\right),
\end{equation}
with $\alpha_c=100$ controlling the saturation strength.
$\alpha_c=100$ is chosen to strongly limit drift velocities and prevent grid-scale collapse;
qualitative behavior is insensitive within an order of magnitude.
This form bounds the drift velocity as $\lvert \nabla c\rvert$ increases, suppressing high-frequency collapse while retaining the intermediate-wavelength destabilization.
With this regularization in place, the feedback-augmented system develops persistent finite-wavelength spatial heterogeneity; \Cref{fig:chemotaxis_attractant} shows the patterned $S$ field together with the chemoattractant $c$ and a representative cross-section indicating their phase alignment.

\begin{figure}[htbp]
\centering
\includegraphics[width=\textwidth]{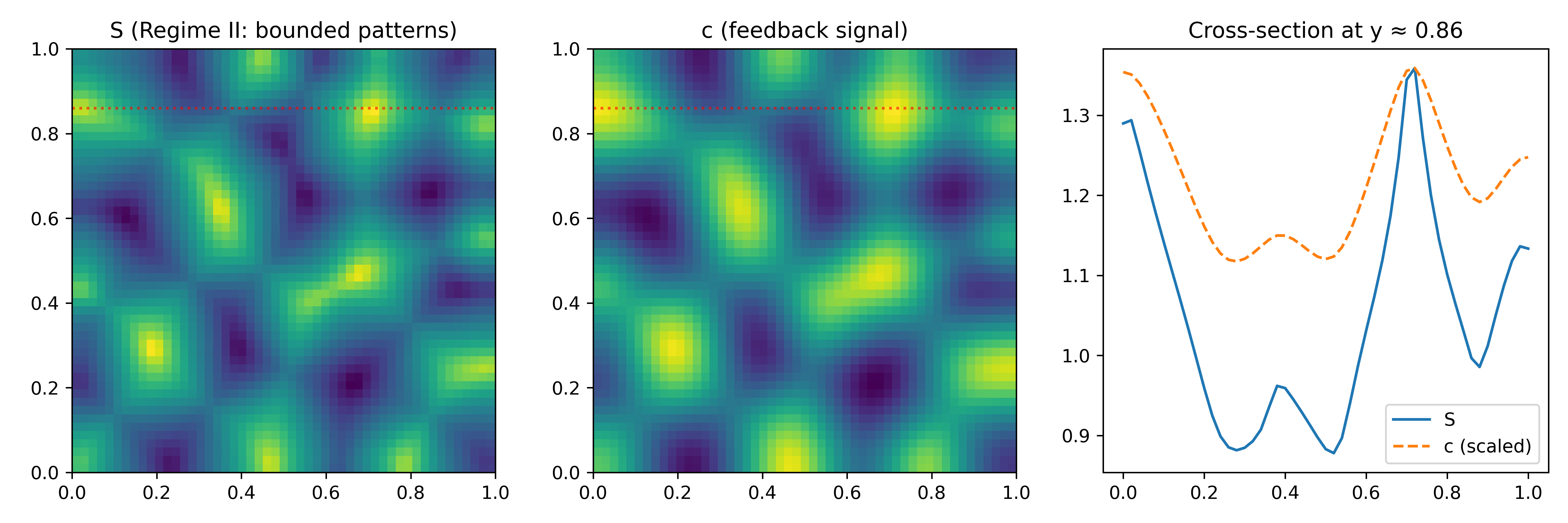}
\caption{\textbf{Restoration of patterning via bidirectional feedback.}
Left: spatial profile of $S$ showing finite-wavelength structures.
Middle: chemoattractant field $c$ generated by $S$ via \eqref{eq:feedback_model_c}.
Right: cross-section at $y=0.86$ showing phase alignment between $S$ and $c$.}
\label{fig:chemotaxis_attractant}
\end{figure}

\subsubsection{Regime III (aggregation/ill-posed tendency): removal of saturation}
In contrast, when the same feedback system is simulated without flux saturation (i.e., using the linear taxis flux in \eqref{eq:feedback_model_c}), the focusing drift can overwhelm diffusive smoothing and rapidly amplify small scales. 
Numerically, this manifests as overflow/NaN and breakdown rather than a smooth patterned attractor; see \Cref{fig:collapse}.
Accordingly, we interpret the breakdown as an aggregation tendency that necessitates a regularization mechanism (here: flux limitation), not as classical finite-wavelength pattern formation. 
In our parameter sweeps (with all other parameters fixed as in Table~\ref{tab:parameters}), we observed that the linear taxis sensitivity $\chi_S'$ admits a transition: below an $\mathcal{O}(10^{-2})$ level the simulations remain well-behaved and patterns (when present) stay bounded, whereas above this level the unregulated linear flux tends to trigger rapid small-scale amplification and numerical breakdown.

\begin{figure}[htbp]
\centering
\includegraphics[width=0.7\linewidth]{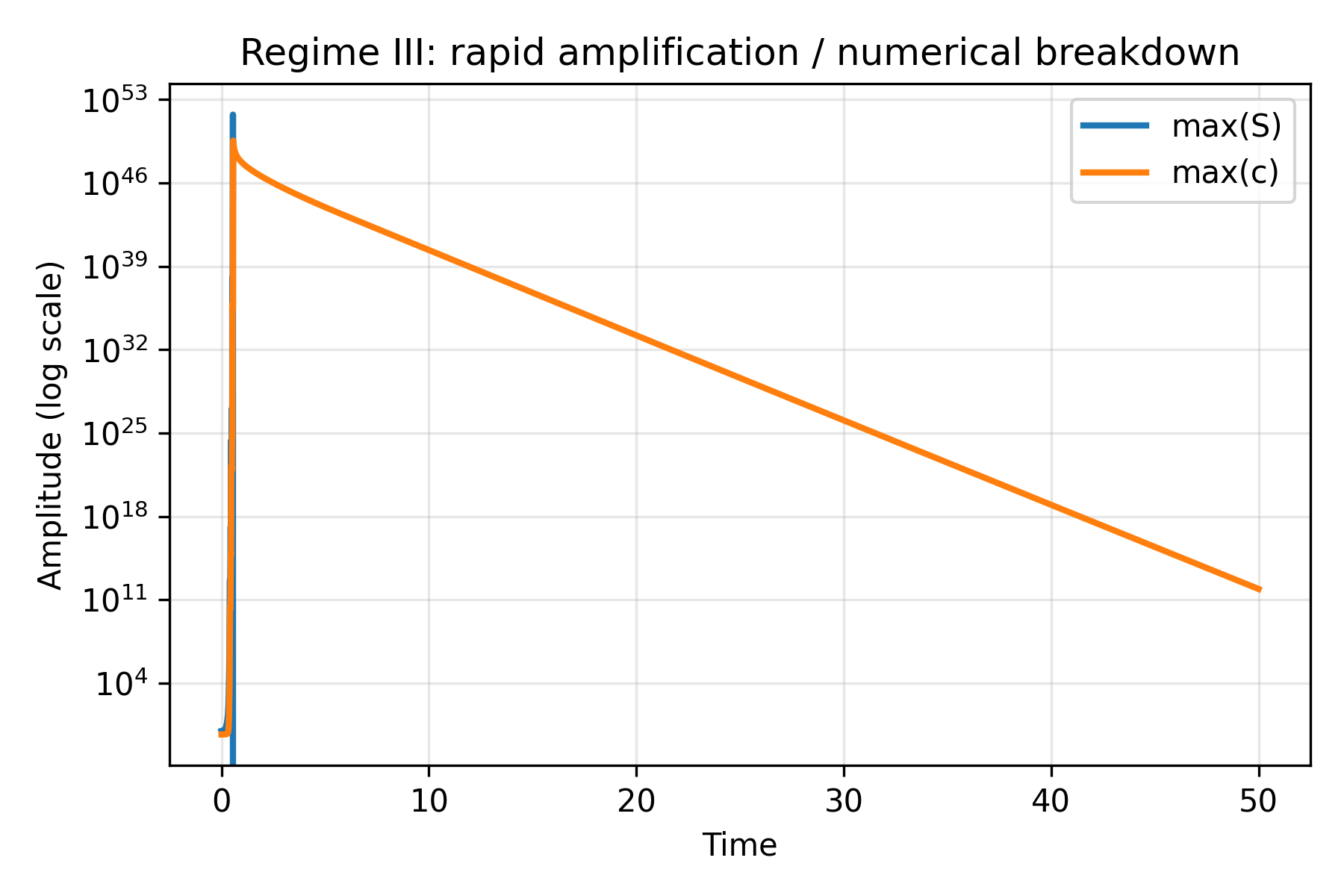}
\caption{\textbf{Numerical breakdown without flux limitation.}
In the bidirectional feedback setting \eqref{eq:feedback_model_c} with the linear taxis flux (no flux limitation), $\max S(t)$ and $\max c(t)$ grow rapidly and the simulation breaks down.
This behavior is consistent with aggregation/ill-posedness (Regime~III) and signals the need for regularization.}
\label{fig:collapse}
\end{figure}

\medskip
Taken together, the simulations in Regimes~I, I$'$, II, and III provide reproducible evidence for the paper's main message: under damping/relaxation, unidirectional coupling relaxes rather than generates persistent patterns, whereas bidirectional feedback can generate finite-wavelength structure in a well-posed regime, with flux regularization separating bounded patterning from aggregation-type breakdown.

\section{Discussion}
\label{sec:discussion}

We summarize the main message of this work and place it in the broader context of spatial instability mechanisms in hybrid PDE--ODE models. 
The proposed framework couples two proliferating tumor phenotypes $(S,R)$ to an exogenously administered therapeutic agent $I$ (single dose, open loop, no sustained source) and to a stromal switching module $(P,A)$ that is non-diffusing. 
All PDE components are posed with homogeneous Neumann boundary conditions, representing a closed tissue domain and ensuring compatibility with spatially homogeneous equilibria and with the long-time reduction developed in \Cref{sec:base_analysis}. 
Within this setting, the analysis isolates how the topology of directed transport couplings interacts with damping and signal relaxation to determine whether spatial heterogeneity is generated from homogeneity, or instead decays.

A first structural feature is the open-loop decay of the drug field. 
Because $I$ satisfies a linear parabolic equation with clearance, it relaxes exponentially after dosing, and therefore all $I$-dependent couplings in the remaining equations vanish asymptotically through $\delta(I)$ and $\phi(I)$; see \Cref{subsec:long_time_reduction}. 
Mathematically, this yields a clean separation between transient drug-mediated effects and the intrinsic long-time behavior selected by the population kinetics. 
Biologically, it formalizes a limitation of single-dose interventions in an open-loop setting: in the absence of sustained input, drug-mediated modulation alone cannot maintain persistent spatial structure, and any enduring heterogeneity must be supported by an internal generation mechanism.

At the baseline level (no chemotaxis), the model exhibits strong dissipation. 
Diffusion acts as a spatial smoothing mechanism under Neumann boundary conditions, while logistic competition and switching provide damping in the reaction terms. 
Consistently, we establish global well-posedness and invariant-region structure, and we show that diffusion-driven (Turing-type) destabilization does not occur for the damped reaction--diffusion core near the post-washout coexistence state; see \Cref{subsec:turing_stability}. 
This clarifies that the baseline hybrid coupling through $(P,A)$ does not by itself supply a PDE-level pattern generator. 
In particular, the locally conserved stromal pool in the baseline regime produces a neutral mode in the $(P,A)$ block, but it does not create spatial growth because the stromal variables do not diffuse at this level.

The principal mechanism contribution of the paper is the directionality--damping principle for chemotaxis-augmented extensions (\Cref{sec:chemotaxis}). 
Chemotaxis introduces directed drift terms that can amplify spatial perturbations, but whether this amplification constitutes genuine pattern generation depends on two intertwined features: the directionality of coupling (open-loop sensing versus closed-loop feedback) and the damping/relaxation structure (logistic crowding and signal decay). 
In unidirectional coupling, the populations migrate up a prescribed or independently relaxing signal landscape but do not reinforce it; the autocatalytic loop needed for sustained amplification is therefore absent. 
Under the decay/damping assumptions encoded in the unidirectional templates, this setting cannot generate diffusion-driven structure from a homogeneous equilibrium, although transient focusing may occur when the signal landscape initially contains heterogeneity; see \Cref{subsec:oneway}. 
In contrast, bidirectional feedback closes the loop by making the guiding landscape depend on the populations, thereby modifying the effective transport operator. 
This modification produces a sharp classification into three regimes (\Cref{subsec:twoway_classification}): a stable regime with decay to homogeneity, a well-posed finite-band regime in which intermediate wavelengths destabilize while high frequencies remain damped, and an aggregation/ill-posed regime in which strong focusing defeats parabolic smoothing and leads to Hadamard-type high-frequency amplification.

A recurring theme across both the analysis and the numerical experiments is the role of strong parabolicity. 
In the bidirectional setting, the effective mobility matrix $M$ provides a mathematically precise indicator separating finite-wavelength selection from aggregation tendencies: when $\mathrm{sym}(M)$ remains positive definite, high frequencies are damped and any instability is necessarily confined to a bounded band; when this condition fails, high-frequency modes are no longer controlled and the unregularized continuum description can enter an ill-posed regime. 
From a modelling perspective, this emphasizes that strong positive feedback and steep taxis responses cannot be pushed arbitrarily far in continuum PDE descriptions without additional physics. 
Flux saturation and related mechanisms (e.g.\ volume filling) act as biologically motivated regularizers that restore well-posedness and permit bounded finite-wavelength structure as a stable outcome, as illustrated numerically in \Cref{sec:numerical}.

Several extensions are natural and remain mathematically tractable within the present framework. 
Repeated dosing or continuous infusion can be incorporated by adding an input term to the $I$ equation, which would remove the strict $I\to0$ long-time reduction while preserving the conceptual separation between open-loop forcing and closed-loop feedback. 
Allowing small diffusion in the stromal variables (or introducing spatial coupling through tissue remodeling) would remove exact local conservation of $P+A$ and may introduce additional slow modes interacting with the Neumann spectrum. 
Multiple chemoattractants or phenotype-dependent taxis can be handled within the mobility-matrix perspective, at the expense of higher-dimensional stability conditions. 
Finally, intrinsic (demographic) variability can be incorporated by adding stochastic terms that preserve nonnegativity in an It\^o sense, providing a route to quantify robustness of the regime classification under noise \citep{abundo_stochastic_1991,wang_analysis_2025}.

Overall, the results delineate a coherent picture: in a hybrid PDE--ODE setting with a single-dose open-loop drug, persistent spatial heterogeneity requires a genuine generation mechanism, and chemotactic feedback provides such a mechanism precisely when transport remains in a strongly parabolic (or appropriately regularized) regime.

\backmatter

\section*{Data Availability:}
Data sharing is not applicable to this article as no new data were created or analyzed in this study.

\section*{Funding:}
All authors of this article have confirmed that this research received no external funding.

\section*{Author Contributions}
All authors contributed to the study. All authors read and approved the final
manuscript.

\section*{Ethical statement}
The authors declare that the research presented in this manuscript is original and has not
been published elsewhere and is not under consideration by another journal. The study was conducted in
accordance with ethical principles and guidelines.

\section*{Declaration:}
All authors declare no competing interests.

\begin{appendices}

\section{Unidirectional tumor--drug--stroma chemotaxis: full dispersion relation}\label{app:proof_corollary_oneway_damping}

This appendix supplies the complete linearization and Neumann-mode dispersion relation for the unidirectional (open-loop) tumor--drug--stroma chemotaxis model used in \Cref{subsec:oneway}, thereby justifying Corollary~\ref{cor:oneway_damping}.  

\subsection{The unidirectional (open-loop) chemotaxis extension}
We consider the following unidirectional extension of the base hybrid model \eqref{eq:system}.  The tumor phenotypes $(S,R)$ undergo Keller--Segel-type drift along the gradient of a signal field $c(x,t)$, while $c$ evolves independently (no feedback from $(S,R)$).  The remaining components $(I,P,F_a)$ evolve as in \eqref{eq:system}.  Concretely,
\begin{align}
\label{eq:oneway_damping_final}
\begin{cases}
\partial_t S
= d_S\Delta S - \chi_S\nabla\!\cdot(S\nabla c)
+ \lambda_S S\Bigl(1-\dfrac{S+R}{K}\Bigr) - \alpha S - \delta(I)S + \xi\bigl[1-\phi(I)\bigr]R,\\[6pt]
\partial_t R
= d_R\Delta R - \chi_R\nabla\!\cdot(R\nabla c)
+ \lambda_R R\Bigl(1-\dfrac{S+R}{K}\Bigr) + \alpha S + \eta\,\phi(I)F_a R - \xi\bigl[1-\phi(I)\bigr]R,\\[6pt]
\partial_t I = d_I\Delta I - \gamma_I I,\\[6pt]
\partial_t P = -\theta \phi(I)P + \beta\bigl[1-\phi(I)\bigr]F_a,\\[6pt]
\partial_t F_a = \theta \phi(I)P - \beta\bigl[1-\phi(I)\bigr]F_a,\\[6pt]
\partial_t c = d_c \Delta c + \mathcal Q(c),
\qquad \mathcal Q'(c)<0.
\end{cases}
\end{align}
Homogeneous Neumann boundary conditions are imposed for the diffusing variables $(S,R,I,c)$:
\begin{equation}
\label{eq:neumann_bc_oneway_appendix}
\partial_{\mathrm n}S=\partial_{\mathrm n}R=\partial_{\mathrm n}I=\partial_{\mathrm n}c=0
\qquad\text{on }\partial U\times(0,\infty).
\end{equation}
In the baseline hybrid regime $(P,F_a)$ evolves pointwise in $x$. The signal relaxation condition $\mathcal Q'(c)<0$ encodes strict damping of the open-loop guidance field.

\subsection{Homogeneous equilibrium and Jacobian blocks}
Let
\[
\mathbf u^*=(S^*,R^*,I^*,P^*,F_a^*,c^*)
\]
be a spatially homogeneous equilibrium of \eqref{eq:oneway_damping_final}, i.e.
\[
\Delta S^*=\Delta R^*=\Delta I^*=\Delta c^*=0,\qquad
\mathcal Q(c^*)=0,
\]
and the reaction terms vanish at $(S^*,R^*,I^*,P^*,F_a^*)$.

Define, for convenience,
\[
\phi^*:=\phi(I^*),\qquad \delta^*:=\delta(I^*),\qquad \sigma^*:=\xi\bigl(1-\phi^*\bigr),\qquad \zeta^*:=\eta\,\phi^* F_a^*.
\]
The kinetic $(S,R)$ reaction map at fixed $(I^*,F_a^*)$ is
\[
F_S(S,R)=\lambda_S S\Bigl(1-\frac{S+R}{K}\Bigr)-\alpha S-\delta^* S+\sigma^* R,
\qquad
F_R(S,R)=\lambda_R R\Bigl(1-\frac{S+R}{K}\Bigr)+\alpha S+\zeta^* R-\sigma^* R.
\]
Its Jacobian at $(S^*,R^*)$ is
\begin{equation}
\label{eq:JF_SR_appendix}
\mathcal J_{SR}:=
\begin{pmatrix}
a & b\\
c & d
\end{pmatrix},
\end{equation}
with the explicit entries
\begin{align}
\label{eq:abcd_appendix}
\begin{aligned}
a &= \partial_S F_S(S^*,R^*)= \lambda_S\Bigl(1-\frac{2S^*+R^*}{K}\Bigr)-\alpha-\delta^*,\\[4pt]
b &= \partial_R F_S(S^*,R^*)= -\frac{\lambda_S}{K}S^*+\sigma^*,\\[4pt]
c &= \partial_S F_R(S^*,R^*)= \alpha-\frac{\lambda_R}{K}R^*,\\[4pt]
d &= \partial_R F_R(S^*,R^*)= \lambda_R\Bigl(1-\frac{S^*+2R^*}{K}\Bigr)+\zeta^*-\sigma^*.
\end{aligned}
\end{align}
(The dependence of $\phi(\cdot)$ and $\delta(\cdot)$ on $I$ produces additional forcing terms proportional to the $I$-perturbation; these appear below and are shown not to affect eigenvalues because the $I$-equation is open-loop.)

The $c$-block linearization is scalar and strictly stable:
\begin{equation}
\label{eq:c_block_appendix}
\partial_t \tilde c = d_c \Delta \tilde c + \mathcal Q'(c^*)\,\tilde c,
\qquad \mathcal Q'(c^*)<0.
\end{equation}
The drug block is also strictly stable:
\begin{equation}
\label{eq:I_block_appendix}
\partial_t \tilde I = d_I \Delta \tilde I - \gamma_I \tilde I.
\end{equation}
Finally, in the baseline hybrid regime, the stromal switching block is pointwise linear:
\begin{equation}
\label{eq:PA_block_appendix}
\partial_t
\begin{pmatrix}\tilde P\\ \tilde F_a\end{pmatrix}
=
B(I^*)
\begin{pmatrix}\tilde P\\ \tilde F_a\end{pmatrix}
\ +\ 
\tilde I\,
B_I(I^*)
\begin{pmatrix}P^*\\ F_a^*\end{pmatrix},
\qquad
B(I^*)=
\begin{pmatrix}
-\theta\phi^* & \beta(1-\phi^*)\\
\theta\phi^* & -\beta(1-\phi^*)
\end{pmatrix},
\end{equation}
where $B_I(I^*)$ denotes the derivative of $B(I)$ with respect to $I$ at $I=I^*$ (it is proportional to $\phi'(I^*)$).  Note that $B(I^*)$ has eigenvalues
\begin{equation}
\label{eq:PA_eigs_appendix}
\lambda_{PA,1}=0,
\qquad
\lambda_{PA,2}=-(\theta\phi^*+\beta(1-\phi^*))\le 0,
\end{equation}
so the $(P,F_a)$ block contributes no positive spectrum.  The eigenvalue $0$ corresponds to the locally conserved pool $P+F_a$ in the baseline regime.

\subsection{Linearization of the chemotaxis terms}
Write perturbations
\[
S=S^*+\tilde S,\quad R=R^*+\tilde R,\quad I=I^*+\tilde I,\quad P=P^*+\tilde P,\quad F_a=F_a^*+\tilde F_a,\quad c=c^*+\tilde c.
\]
Since $c^*$ is spatially constant, $\nabla c^*=0$.  The taxis fluxes linearize to
\[
-\chi_S\nabla\!\cdot(S\nabla c) \;=\; -\chi_S\nabla\!\cdot\bigl((S^*+\tilde S)\nabla(c^*+\tilde c)\bigr)
\;=\; -\chi_S S^* \Delta \tilde c \;+\; \text{(higher order)},
\]
and similarly
\[
-\chi_R\nabla\!\cdot(R\nabla c)= -\chi_R R^* \Delta \tilde c + \text{(higher order)}.
\]
Crucially, the taxis contributes only a forcing from $\tilde c$ into the $(\tilde S,\tilde R)$ equations; it does not modify the $(\tilde S,\tilde R)$ Jacobian with respect to $(\tilde S,\tilde R)$ at a homogeneous equilibrium.

\subsection{Neumann mode decomposition and the dispersion matrices}
Let $\{-\Delta w_k=\mu_k w_k,\ \partial_{\mathrm n}w_k=0\}_{k\ge 0}$ be the Neumann Laplacian eigenpairs on $U$, with $\mu_k\ge 0$ and $\{w_k\}$ an orthonormal basis of $L^2(U)$.
Expand each perturbation in this basis:
\[
\tilde S=\sum_k s_k(t)w_k(x),\quad
\tilde R=\sum_k r_k(t)w_k(x),\quad
\tilde I=\sum_k i_k(t)w_k(x),\quad
\tilde c=\sum_k \hat c_k(t)w_k(x),
\]
and similarly for $\tilde P,\tilde F_a$.

For each mode $\mu=\mu_k$, the $(\tilde c,\tilde I)$ coefficients satisfy the decoupled ODEs
\begin{equation}
\label{eq:cI_modes_appendix}
\hat c_k' = \bigl(\mathcal Q'(c^*)-d_c\mu\bigr)\hat c_k,
\qquad
i_k' = -(\gamma_I+d_I\mu)\,i_k.
\end{equation}
Hence
\[
\mathcal Q'(c^*)-d_c\mu<0\ \ \forall \mu\ge 0,
\qquad
-(\gamma_I+d_I\mu)<0\ \ \forall \mu\ge 0,
\]
so neither $c$ nor $I$ can contribute unstable eigenvalues.

The $(\tilde S,\tilde R)$ mode amplitudes satisfy
\begin{equation}
\label{eq:SR_modes_appendix}
\begin{pmatrix}s_k\\ r_k\end{pmatrix}'=
\underbrace{\Bigl(\mathcal J_{SR}-\mu\,\mathrm{diag}(d_S,d_R)\Bigr)}_{=:\,\mathcal A_{SR}(\mu)}
\begin{pmatrix}s_k\\ r_k\end{pmatrix}
\ +\ 
\mu
\begin{pmatrix}
\chi_S S^*\\
\chi_R R^*
\end{pmatrix}\hat c_k
\ +\ 
\begin{pmatrix}
\Gamma_S\\ \Gamma_R
\end{pmatrix} i_k,
\end{equation}
where $\Gamma_S,\Gamma_R$ are constants determined by derivatives of $\delta(I)$ and $\phi(I)$ at $I^*$ (they are proportional to $\delta'(I^*)$ and $\phi'(I^*)$).  The important point is that $\hat c_k$ and $i_k$ enter \eqref{eq:SR_modes_appendix} only as inhomogeneous forcing: the homogeneous part is governed by the $2\times 2$ dispersion matrix
\begin{equation}
\label{eq:ASR_def_appendix}
\mathcal A_{SR}(\mu)=\mathcal J_{SR}-\mu
\begin{pmatrix}
d_S & 0\\
0 & d_R
\end{pmatrix}.
\end{equation}

Likewise, the $(\tilde P,\tilde F_a)$ block is driven by $i_k$ but has homogeneous dynamics generated by $B(I^*)$ in \eqref{eq:PA_block_appendix}, with eigenvalues \eqref{eq:PA_eigs_appendix}.  Therefore, after ordering the mode variables as
\[
(\hat c_k,\, i_k,\, \tilde p_k,\, \tilde a_k,\, s_k,\, r_k),
\]
the full mode-wise linear system is block lower triangular: $(\hat c_k,i_k)$ evolve independently, $(\tilde p_k,\tilde a_k)$ are driven by $i_k$ only, and $(s_k,r_k)$ are driven by $(\hat c_k,i_k)$ only.  Consequently, the spectrum for each $\mu$ is the union of the spectra of the diagonal blocks:
\begin{equation}
\label{eq:spectrum_union_appendix}
\sigma(\mu)=\Bigl\{\mathcal Q'(c^*)-d_c\mu,\ -(\gamma_I+d_I\mu),\ 0,\ -(\theta\phi^*+\beta(1-\phi^*)),\ \lambda_\pm(\mu)\Bigr\},
\end{equation}
where $\lambda_\pm(\mu)$ are the two eigenvalues of $\mathcal A_{SR}(\mu)$.

\subsection{Explicit trace and determinant of $\mathcal A_{SR}(\mu)$}
From \eqref{eq:ASR_def_appendix} and \eqref{eq:JF_SR_appendix},
\begin{equation}
\label{eq:trace_det_appendix}
\mathrm{tr}\bigl(\mathcal A_{SR}(\mu)\bigr)=(a+d)-\mu(d_S+d_R),
\qquad
\det\bigl(\mathcal A_{SR}(\mu)\bigr)=(a-\mu d_S)(d-\mu d_R)-bc.
\end{equation}
Equivalently, writing the determinant as a quadratic polynomial in $\mu$,
\begin{equation}
\label{eq:det_quadratic_appendix}
\det\bigl(\mathcal A_{SR}(\mu)\bigr)
=d_S d_R\,\mu^2-(d\,d_S+a\,d_R)\mu+(ad-bc).
\end{equation}
Hence the mode-wise characteristic polynomial for the $(S,R)$ block is
\begin{equation}
\label{eq:charpoly_SR_appendix}
\lambda^2-\mathrm{tr}\bigl(\mathcal A_{SR}(\mu)\bigr)\lambda+\det\bigl(\mathcal A_{SR}(\mu)\bigr)=0,
\end{equation}
and the Routh--Hurwitz conditions for $\Re\lambda_\pm(\mu)<0$ are exactly
\[
\mathrm{tr}\bigl(\mathcal A_{SR}(\mu)\bigr)<0,
\qquad
\det\bigl(\mathcal A_{SR}(\mu)\bigr)>0.
\]

\subsection{Why the $c$-block cannot create unstable eigenvalues}
The $c$-equation is autonomous and strictly dissipative at the linear level: by \eqref{eq:cI_modes_appendix}, for every Neumann mode $\mu\ge 0$,
\[
\hat c_k'=\bigl(\mathcal Q'(c^*)-d_c\mu\bigr)\hat c_k,
\qquad \mathcal Q'(c^*)-d_c\mu<0.
\]
Thus the $c$-block contributes only strictly negative eigenvalues.  Moreover, because $c$ receives no feedback from $(S,R)$ in \eqref{eq:oneway_damping_final}, its block is diagonal in the mode-wise ordering and cannot be shifted by coupling.  The chemotaxis terms appear only as forcing from $\hat c_k$ into $(s_k,r_k)$ in \eqref{eq:SR_modes_appendix}, which does not affect the eigenvalues (cf.\ the block-triangular spectrum union \eqref{eq:spectrum_union_appendix}).

\subsection{Completion of Corollary~\ref{cor:oneway_damping}}
Assume the homogeneous equilibrium is kinetically stable in the sense that the $(S,R)$ Jacobian satisfies
\begin{equation}
\label{eq:kinetic_stability_appendix}
a+d<0,\qquad ad-bc>0.
\end{equation}
Then $\mathrm{tr}(\mathcal A_{SR}(\mu))<0$ for all $\mu>0$ because $\mu(d_S+d_R)>0$.  To ensure stability of every non-constant Neumann mode it suffices to guarantee
\[
\det(\mathcal A_{SR}(\mu))>0\quad\forall \mu\ge 0,
\]
which holds under either of the two standard sufficient conditions (exactly as in Theorem~\ref{thm:oneway}):
\[
d\,d_S+a\,d_R<0,
\qquad \text{or}\qquad
4(ad-bc)\,d_Sd_R>(d\,d_S+a\,d_R)^2.
\]
Under these conditions, the eigenvalues $\lambda_\pm(\mu)$ of the $(S,R)$ dispersion matrix satisfy $\Re\lambda_\pm(\mu)<0$ for all $\mu>0$.

Finally, the remaining diagonal blocks in \eqref{eq:spectrum_union_appendix} contribute only nonpositive spectrum: the drug eigenvalue is $-(\gamma_I+d_I\mu)<0$, the signal eigenvalue is $\mathcal Q'(c^*)-d_c\mu<0$, and the stromal block contributes $\{0,-(\theta\phi^*+\beta(1-\phi^*))\}$ with the zero eigenvalue corresponding to the locally conserved pool in the baseline hybrid regime. Therefore, no diffusion-driven destabilization can arise from unidirectional taxis: open-loop sensing does not generate Turing-type patterns from a homogeneous equilibrium, completing the promised derivation.

\subsection{Specialization to the single-dose washout equilibrium}
In the single-dose setting emphasized in the main text, the long-time homogeneous equilibrium satisfies $I^*=0$ (drug washout), hence $\phi^*=\phi(0)$ and $\delta^*=\delta(0)=0$; in typical choices one has $\phi(0)=0$, so $\sigma^*=\xi$ and $\zeta^*=0$.  In that case the Jacobian entries \eqref{eq:abcd_appendix} reduce to those of the reduced $(S,R)$ subsystem \eqref{eq:dynamical}.  The dispersion matrix \eqref{eq:ASR_def_appendix} then matches the unidirectional template in \Cref{subsec:oneway}, and the above conditions reduce exactly to the criteria stated in Theorem~\ref{thm:oneway}.

\end{appendices}

\bibliography{reference}

@book{murray_mathematical_2002,
	address = {New York, NY},
	series = {Interdisciplinary {Applied} {Mathematics}},
	title = {Mathematical biology: {I}. {An} introduction},
	volume = {17},
	copyright = {http://www.springer.com/tdm},
	isbn = {978-0-387-95223-9 978-0-387-22437-4},
	shorttitle = {Mathematical biology},
	doi = {10.1007/b98868},
	language = {en},
	publisher = {Springer New York},
	editor = {Murray, J. D.},
	year = {2002},
}

@article{wang_analysis_2025,
	title = {Analysis framework for stochastic predator{\textendash}prey model with demographic noise},
	volume = {27},
	issn = {25900374},
	doi = {10.1016/j.rinam.2025.100621},
	language = {en},
	journal = {Results in Applied Mathematics},
	author = {Wang, L. S. and Yu, J.},
	year = {2025},
	pages = {100621},
}

@article{abundo_stochastic_1991,
	title = {A stochastic model for predator-prey systems: basic properties, stability and computer simulation},
	volume = {29},
	copyright = {http://www.springer.com/tdm},
	issn = {0303-6812, 1432-1416},
	shorttitle = {A stochastic model for predator-prey systems},
	doi = {10.1007/BF00164048},
	language = {en},
	number = {6},
	journal = {Journal of Mathematical Biology},
	author = {Abundo, M.},
	year = {1991},
	pages = {495--511},
}

@article{wang_analysis_2025-1,
	title = {Analysis and mean-field limit of a hybrid {PDE}-{ABM} modeling angiogenesis-regulated resistance evolution},
	volume = {13},
	issn = {2227-7390},
	doi = {10.3390/math13172898},
	language = {en},
	number = {17},
	journal = {Mathematics},
	author = {Wang, L. S. and Yu, J. and Li, S. and Liu, Z.},
	year = {2025},
	pages = {2898},
}

@article{liu_bidirectional_2025,
	title = {Bidirectional endothelial feedback drives turing-vascular patterning and drug-resistance niches: a hybrid {PDE}-agent-based study},
	volume = {12},
	issn = {2306-5354},
	shorttitle = {Bidirectional endothelial feedback drives turing-vascular patterning and drug-resistance niches},
	doi = {10.3390/bioengineering12101097},
	language = {en},
	number = {10},
	journal = {Bioengineering},
	author = {Liu, Z. and Wang, L. S. and Yu, J. and Zhang, J. and Martel, E. and Li, S.},
	year = {2025},
	pages = {1097},
}

@article{bubba_chemotaxis-based_2019,
	title = {A chemotaxis-based explanation of spheroid formation in {3D} cultures of breast cancer cells},
	volume = {479},
	issn = {00225193},
	doi = {10.1016/j.jtbi.2019.07.002},
	language = {en},
	urldate = {2026-01-15},
	journal = {Journal of Theoretical Biology},
	author = {Bubba, F. and Pouchol, C. and Ferrand, N. and Vidal, G. and Almeida, L. and Perthame, B. and Sabbah, M.},
	year = {2019},
	pages = {73--80},
	file = {?????:C\:\\Users\\lenovo\\Zotero\\storage\\5FXQ6GP6\\Bubba ? - 2019 - A chemotaxis-based explanation of spheroid formation in 3D cultures of breast cancer cells.pdf:application/pdf},
}

@article{watts_pdgf-aa_2016,
	title = {{PDGF}-{AA} mediates mesenchymal stromal cell chemotaxis to the head and neck squamous cell carcinoma tumor microenvironment},
	volume = {14},
	issn = {1479-5876},
	doi = {10.1186/s12967-016-1091-6},
	language = {en},
	number = {1},
	journal = {Journal of Translational Medicine},
	author = {Watts, T. L. and Cui, R. and Szaniszlo, P. and Resto, V. A. and Powell, D. W. and Pinchuk, I. V.},
	year = {2016},
	pages = {337},
	file = {??:C\:\\Users\\lenovo\\Zotero\\storage\\5BU62HYP\\Watts ? - 2016 - PDGF-AA mediates mesenchymal stromal cell chemotaxis to the head and neck squamous cell carcinoma tu.pdf:application/pdf},
}

@article{turing_chemical_1952,
	title = {The chemical basis of morphogenesis},
	volume = {237},
	copyright = {https://royalsociety.org/journals/ethics-policies/data-sharing-mining/},
	issn = {0080-4622, 2054-0280},
	doi = {10.1098/rstb.1952.0012},
	language = {en},
	number = {641},
	journal = {Philosophical Transactions of the Royal Society of London. Series B, Biological Sciences},
	author = {Turing, A. M.},
	year = {1952},
	pages = {37--72},
}

@article{keller_initiation_1970,
	title = {Initiation of slime mold aggregation viewed as an instability},
	volume = {26},
	copyright = {https://www.elsevier.com/tdm/userlicense/1.0/},
	issn = {00225193},
	doi = {10.1016/0022-5193(70)90092-5},
	language = {en},
	number = {3},
	journal = {Journal of Theoretical Biology},
	author = {Keller, E. F. and Segel, L. A.},
	year = {1970},
	pages = {399--415},
}

@article{horstmann_boundedness_2005,
	title = {Boundedness vs. blow-up in a chemotaxis system},
	volume = {215},
	copyright = {https://www.elsevier.com/tdm/userlicense/1.0/},
	issn = {00220396},
	doi = {10.1016/j.jde.2004.10.022},
	language = {en},
	number = {1},
	journal = {Journal of Differential Equations},
	author = {Horstmann, D. and Winkler, M.},
	year = {2005},
	pages = {52--107},
}

@article{jin_critical_2026,
	title = {Critical blow-up exponent in a {Keller}-{Segel} system with flux limitation and indirect signal production},
	volume = {554},
	issn = {0022247X},
	doi = {10.1016/j.jmaa.2025.129916},
	language = {en},
	number = {1},
	journal = {Journal of Mathematical Analysis and Applications},
	author = {Jin, T. and Li, Y. and Yan, J.},
	year = {2026},
	pages = {129916},
}

@article{horstmann_1970_2003,
	title = {From 1970 until present: the keller-segel model in chemotaxis and its consequences},
	author = {Horstmann, D.},
	year = {2003},
}

@article{arumugam_keller-segel_2021,
	title = {Keller-segel chemotaxis models: a review},
	volume = {171},
	issn = {0167-8019, 1572-9036},
	shorttitle = {Keller-segel chemotaxis models},
	doi = {10.1007/s10440-020-00374-2},
	language = {en},
	number = {1},
	journal = {Acta Applicandae Mathematicae},
	author = {Arumugam, G. and Tyagi, J.},
	year = {2021},
	pages = {6},
}

@article{jager_explosions_1992,
	title = {On explosions of solutions to a system of partial differential equations modelling chemotaxis},
	volume = {329},
	issn = {0002-9947, 1088-6850},
	doi = {10.1090/S0002-9947-1992-1046835-6},
	language = {en},
	number = {2},
	journal = {Transactions of the American Mathematical Society},
	author = {J{\"a}ger, W. and Luckhaus, S.},
	year = {1992},
	pages = {819--824},
}

@article{chen_non-linear_2019,
	title = {A non-linear analysis of {Turing} pattern formation},
	volume = {14},
	issn = {1932-6203},
	doi = {10.1371/journal.pone.0220994},
	language = {en},
	number = {8},
	journal = {PLOS ONE},
	author = {Chen, Y. and Buceta, J.},
	editor = {Fanelli, Duccio},
	year = {2019},
	pages = {e0220994},
	file = {??:C\:\\Users\\lenovo\\Zotero\\storage\\TUMLIN28\\Chen?Buceta - 2019 - A non-linear analysis of Turing pattern formation.pdf:application/pdf},
}

@book{smoller_shock_1994,
	address = {New York, NY},
	series = {Grundlehren der mathematischen {Wissenschaften}},
	title = {Shock waves and reaction{\textemdash}diffusion equations},
	volume = {258},
	copyright = {http://www.springer.com/tdm},
	isbn = {978-1-4612-6929-8 978-1-4612-0873-0},
	doi = {10.1007/978-1-4612-0873-0},
	publisher = {Springer New York},
	author = {Smoller, Joel},
	year = {1994},
}

@book{henry_geometric_1981,
	address = {Berlin, Heidelberg},
	series = {Lecture {Notes} in {Mathematics}},
	title = {Geometric theory of semilinear parabolic equations},
	volume = {840},
	copyright = {http://www.springer.com/tdm},
	isbn = {978-3-540-10557-2 978-3-540-38528-8},
	doi = {10.1007/BFb0089647},
	publisher = {Springer Berlin Heidelberg},
	author = {Henry, D.},
	year = {1981},
}

@article{cross_pattern_1993,
	title = {Pattern formation outside of equilibrium},
	volume = {65},
	copyright = {http://link.aps.org/licenses/aps-default-license},
	issn = {0034-6861, 1539-0756},
	doi = {10.1103/RevModPhys.65.851},
	language = {en},
	number = {3},
	journal = {Reviews of Modern Physics},
	author = {Cross, M. C. and Hohenberg, P. C.},
	year = {1993},
	pages = {851--1112},
	file = {?????:C\:\\Users\\lenovo\\Zotero\\storage\\PFVEUVZI\\Cross?Hohenberg - 1993 - Pattern formation outside of equilibrium.pdf:application/pdf},
}

@article{dolnik_resonant_2001,
	title = {Resonant suppression of {Turing} patterns by periodic illumination},
	volume = {63},
	copyright = {http://link.aps.org/licenses/aps-default-license},
	issn = {1063-651X, 1095-3787},
	doi = {10.1103/PhysRevE.63.026101},
	language = {en},
	number = {2},
	journal = {Physical Review E},
	author = {Dolnik, M. and Zhabotinsky, A. M. and Epstein, I. R.},
	year = {2001},
	pages = {026101},
}

@book{murray_mathematical_2003,
	address = {New York, NY},
	series = {Interdisciplinary {Applied} {Mathematics}},
	title = {Mathematical {Biology}: {II}: {Spatial} {Models} and {Biomedical} {Applications}},
	volume = {18},
	copyright = {http://www.springer.com/tdm},
	isbn = {978-0-387-95228-4 978-0-387-22438-1},
	shorttitle = {Mathematical {Biology}},
	doi = {10.1007/b98869},
	language = {en},
	publisher = {Springer New York},
	editor = {Murray, J. D.},
	year = {2003},
}

@article{colizza_reactiondiffusion_2007,
	title = {Reaction{\textendash}diffusion processes and metapopulation models in heterogeneous networks},
	volume = {3},
	copyright = {http://www.springer.com/tdm},
	issn = {1745-2473, 1745-2481},
	doi = {10.1038/nphys560},
	language = {en},
	number = {4},
	journal = {Nature Physics},
	author = {Colizza, V. and Pastor-Satorras, R. and Vespignani, A.},
	year = {2007},
	pages = {276--282},
}

@article{belik_natural_2011,
	title = {Natural human mobility patterns and spatial spread of infectious diseases},
	volume = {1},
	copyright = {http://creativecommons.org/licenses/by/3.0/},
	issn = {2160-3308},
	doi = {10.1103/PhysRevX.1.011001},
	language = {en},
	number = {1},
	journal = {Physical Review X},
	author = {Belik, V. and Geisel, T. and Brockmann, D.},
	year = {2011},
	pages = {011001},
	file = {??:C\:\\Users\\lenovo\\Zotero\\storage\\WMYJGD2H\\Belik ? - 2011 - Natural Human Mobility Patterns and Spatial Spread of Infectious Diseases.pdf:application/pdf},
}

@article{puliafito_three-dimensional_2015,
	title = {Three-dimensional chemotaxis-driven aggregation of tumor cells},
	volume = {5},
	issn = {2045-2322},
	doi = {10.1038/srep15205},
	language = {en},
	number = {1},
	journal = {Scientific Reports},
	author = {Puliafito, A. and De Simone, A. and Seano, G. and Gagliardi, P. A. and Di Blasio, L. and Chianale, F. and Gamba, A. and Primo, L. and Celani, A.},
	year = {2015},
	pages = {15205},
	file = {??:C\:\\Users\\lenovo\\Zotero\\storage\\MX6BCQF4\\Puliafito ? - 2015 - Three-dimensional chemotaxis-driven aggregation of tumor cells.pdf:application/pdf},
}

@article{oudin_physical_2016,
	title = {Physical and chemical gradients in the tumor microenvironment regulate tumor cell invasion, migration, and metastasis},
	volume = {81},
	issn = {0091-7451, 1943-4456},
	doi = {10.1101/sqb.2016.81.030817},
	language = {en},
	journal = {Cold Spring Harbor Symposia on Quantitative Biology},
	author = {Oudin, M. J. and Weaver, V. M.},
	year = {2016},
	pages = {189--205},
	file = {??:C\:\\Users\\lenovo\\Zotero\\storage\\VVVTMC4F\\Oudin?Weaver - 2016 - Physical and chemical gradients in the tumor microenvironment regulate tumor cell invasion, migratio.pdf:application/pdf},
}

@article{esfahani_three-dimensional_2022,
	title = {Three-dimensional cancer cell migration directed by dual mechanochemical guidance},
	volume = {4},
	issn = {2643-1564},
	doi = {10.1103/PhysRevResearch.4.L022007},
	language = {en},
	number = {2},
	journal = {Physical Review Research},
	author = {Esfahani, P. and Levine, H. and Mukherjee, M. and Sun, B.},
	month = apr,
	year = {2022},
	pages = {L022007},
	file = {??:C\:\\Users\\lenovo\\Zotero\\storage\\2C7VPGPJ\\Esfahani ? - 2022 - Three-dimensional cancer cell migration directed by dual mechanochemical guidance.pdf:application/pdf},
}

@article{wrzosek_volume_2010,
	title = {Volume filling effect in modelling chemotaxis},
	volume = {5},
	issn = {0973-5348, 1760-6101},
	doi = {10.1051/mmnp/20105106},
	number = {1},
	journal = {Mathematical Modelling of Natural Phenomena},
	author = {Wrzosek, D.},
	year = {2010},
	pages = {123--147},
	file = {??:C\:\\Users\\lenovo\\Zotero\\storage\\5XALEDX8\\Wrzosek - 2010 - Volume Filling Effect in Modelling Chemotaxis.pdf:application/pdf},
}

@article{keller_model_1971,
	title = {Model for chemotaxis},
	volume = {30},
	copyright = {https://www.elsevier.com/tdm/userlicense/1.0/},
	issn = {00225193},
	doi = {10.1016/0022-5193(71)90050-6},
	language = {en},
	number = {2},
	journal = {Journal of Theoretical Biology},
	author = {Keller, E. F. and Segel, L. A.},
	year = {1971},
	pages = {225--234},
}

@article{chertock_chemotaxis_2012,
	title = {On a chemotaxis model with saturated chemotactic flux},
	volume = {5},
	issn = {1937-5077},
	doi = {10.3934/krm.2012.5.51},
	language = {en},
	number = {1},
	journal = {Kinetic \& Related Models},
	author = {Chertock, A. and Kurganov, A. and Wang, X. and Wu, Y.},
	year = {2012},
	pages = {51--95},
}

@article{najafi_tumor_2019,
	title = {Tumor microenvironment: {Interactions} and therapy},
	volume = {234},
	issn = {0021-9541, 1097-4652},
	shorttitle = {Tumor microenvironment},
	doi = {10.1002/jcp.27425},
	language = {en},
	number = {5},
	journal = {Journal of Cellular Physiology},
	author = {Najafi, M. and Goradel, N. H. and Farhood, B. and Salehi, E. and Solhjoo, S. and Toolee, H. and Kharazinejad, E. and Mortezaee, K.},
	year = {2019},
	pages = {5700--5721},
}

@article{david_role_2025,
	title = {The role of the tumor microenvironment in cancer progression and metastasis},
	volume = {15},
	issn = {2454-7360},
	doi = {10.9734/jcti/2025/v15i2297},
	number = {2},
	journal = {Journal of Cancer and Tumor International},
	author = {David, O. I. and Mmaduabuchi, I. G. and Ugonna, U. K. and Akintayo, E. A. and Gyang, M. and Quadri, O. N. and Rono, C. and Ralph-Okhiria, O. H. and Praise, O.},
	year = {2025},
	pages = {126--154},
}

@article{zhang_tumor_2024,
	title = {The tumor microenvironment: signal transduction},
	volume = {14},
	issn = {2218-273X},
	shorttitle = {The tumor microenvironment},
	doi = {10.3390/biom14040438},
	language = {en},
	number = {4},
	journal = {Biomolecules},
	author = {Zhang, X. and Ma, H. and Gao, Y. and Liang, Y. and Du, Y. and Hao, S. and Ni, T.},
	year = {2024},
	pages = {438},
	file = {??:C\:\\Users\\lenovo\\Zotero\\storage\\JF4BJUX8\\Zhang ? - 2024 - The Tumor Microenvironment Signal Transduction.pdf:application/pdf},
}

@article{yuan_spatial_2016,
	title = {Spatial heterogeneity in the tumor microenvironment},
	volume = {6},
	issn = {2157-1422},
	doi = {10.1101/cshperspect.a026583},
	language = {en},
	number = {8},
	journal = {Cold Spring Harbor Perspectives in Medicine},
	author = {Yuan, Y.},
	year = {2016},
	pages = {a026583},
	file = {??:C\:\\Users\\lenovo\\Zotero\\storage\\9S3RVQKK\\Yuan - 2016 - Spatial heterogeneity in the tumor microenvironment.pdf:application/pdf},
}

@article{wells_spatial_2015,
	title = {Spatial and functional heterogeneities shape collective behavior of tumor-immune networks},
	volume = {11},
	issn = {1553-7358},
	doi = {10.1371/journal.pcbi.1004181},
	language = {en},
	number = {4},
	journal = {PLOS Computational Biology},
	author = {Wells, D. K. and Chuang, Y. and Knapp, L. M. and Brockmann, D. and Kath, W. L. and Leonard, J. N.},
	year = {2015},
	pages = {e1004181},
	file = {??:C\:\\Users\\lenovo\\Zotero\\storage\\B34TEKKP\\Wells ? - 2015 - Spatial and functional heterogeneities shape collective behavior of tumor-immune networks.pdf:application/pdf},
}

@article{cornish-bowden_one_2015,
	title = {One hundred years of {Michaelis}{\textendash}{Menten} kinetics},
	volume = {4},
	issn = {22130209},
	doi = {10.1016/j.pisc.2014.12.002},
	language = {en},
	journal = {Perspectives in Science},
	author = {Cornish-Bowden, A.},
	year = {2015},
	pages = {3--9},
}

@book{rothe_global_1984,
	address = {Berlin, Heidelberg},
	series = {Lecture {Notes} in {Mathematics}},
	title = {Global solutions of reaction-diffusion systems},
	volume = {1072},
	copyright = {http://www.springer.com/tdm},
	isbn = {978-3-540-13365-0 978-3-540-38917-0},
	doi = {10.1007/BFb0099278},
	publisher = {Springer Berlin Heidelberg},
	author = {Rothe, F.},
	year = {1984},
	file = {?????:C\:\\Users\\lenovo\\Zotero\\storage\\RVG9ECPK\\Rothe - 1984 - Global solutions of reaction-diffusion systems.pdf:application/pdf},
}

@book{ladyzenskaja_linear_1968,
	address = {Providence, R.I},
	title = {Linear and quasi-linear equations of parabolic type},
	isbn = {978-1-4704-4440-2},
	language = {eng},
	publisher = {American Mathematical Society},
	author = {Lady{\v z}enskaja, O. A. and Solonnikov, V. A. and Ural'ceva, N. N.},
	year = {1968},
}

@article{lasalle_extensions_1960,
	title = {Some extensions of liapunov's second method},
	volume = {7},
	copyright = {https://ieeexplore.ieee.org/Xplorehelp/downloads/license-information/IEEE.html},
	issn = {0096-2007},
	doi = {10.1109/TCT.1960.1086720},
	number = {4},
	journal = {IRE Transactions on Circuit Theory},
	author = {LaSalle, J.},
	year = {1960},
	pages = {520--527},
}

@book{haddad_nonlinear_2011,
	address = {Princeton},
	title = {Nonlinear dynamical systems and control: a lyapunov-based approach},
	isbn = {978-0-691-13329-4 978-1-4008-4104-2},
	shorttitle = {Nonlinear dynamical systems and control},
	language = {eng},
	publisher = {Princeton University Press},
	author = {Haddad, W. M. and Chellaboina, V.},
	year = {2011},
}

@incollection{joseph_hadamard_1990,
	address = {New York, NY},
	title = {Hadamard instability and frozen coefficients},
	volume = {84},
	isbn = {978-1-4612-8785-8 978-1-4612-4462-2},
	doi = {10.1007/978-1-4612-4462-2_4},
	booktitle = {Fluid {Dynamics} of {Viscoelastic} {Liquids}},
	publisher = {Springer New York},
	author = {Joseph, D. D.},
	year = {1990},
	pages = {69--100},
}

@article{martiel_model_1987,
	title = {A model based on receptor desensitization for cyclic {AMP} signaling in dictyostelium cells},
	volume = {52},
	copyright = {https://www.elsevier.com/tdm/userlicense/1.0/},
	issn = {00063495},
	doi = {10.1016/S0006-3495(87)83275-7},
	language = {en},
	number = {5},
	journal = {Biophysical Journal},
	author = {Martiel, J. and Goldbeter, A.},
	month = nov,
	year = {1987},
	pages = {807--828},
	file = {??:C\:\\Users\\lenovo\\Zotero\\storage\\HS94Z4AS\\Martiel?Goldbeter - 1987 - A model based on receptor desensitization for cyclic AMP signaling in dictyostelium cells.pdf:application/pdf},
}

@book{goldbeter_biochemical_2004,
	address = {Cambridge},
	edition = {transferred to digital printing},
	title = {Biochemical oscillations and cellular rhythms: the molecular bases of periodic and chaotic behaviour},
	isbn = {978-0-521-40307-8 978-0-521-59946-7},
	shorttitle = {Biochemical oscillations and cellular rhythms},
	language = {eng},
	publisher = {Cambridge University Press},
	author = {Goldbeter, A.},
	year = {2004},
}

@article{monk_cyclic_1989,
	title = {Cyclic {AMP} oscillations in suspensions of \textit{{Dictyostelium} discoideum}},
	volume = {323},
	copyright = {https://royalsociety.org/journals/ethics-policies/data-sharing-mining/},
	issn = {0080-4622, 2054-0280},
	doi = {10.1098/rstb.1989.0005},
	language = {en},
	number = {1215},
	journal = {Philosophical Transactions of the Royal Society of London. B, Biological Sciences},
	author = {Monk, P. B. and Othmer, H. G.},
	year = {1989},
	pages = {185--224},
}

@article{spiro_model_1997,
	title = {A model of excitation and adaptation in bacterial chemotaxis},
	volume = {94},
	issn = {0027-8424, 1091-6490},
	doi = {10.1073/pnas.94.14.7263},
	language = {en},
	number = {14},
	journal = {Proceedings of the National Academy of Sciences},
	author = {Spiro, P. A. and Parkinson, J. S. and Othmer, H. G.},
	year = {1997},
	pages = {7263--7268},
	file = {??:C\:\\Users\\lenovo\\Zotero\\storage\\3UJHQH5F\\Spiro ? - 1997 - A model of excitation and adaptation in bacterial chemotaxis.pdf:application/pdf},
}

@article{hofer_dictyostelium_1995,
	title = {\textit{{Dictyostelium} discoideum} : cellular self-organization in an excitable biological medium},
	volume = {259},
	issn = {0962-8452, 1471-2954},
	shorttitle = {\textit{{Dictyostelium} discoideum}},
	doi = {10.1098/rspb.1995.0037},
	language = {en},
	number = {1356},
	journal = {Proceedings of the Royal Society of London. Series B: Biological Sciences},
	author = {H{\"o}fer, T. and Sherratt, J. A. and Maini, P. K.},
	month = mar,
	year = {1995},
	pages = {249--257},
}

@article{kowalczyk_preventing_2005,
	title = {Preventing blow-up in a chemotaxis model},
	volume = {305},
	copyright = {https://www.elsevier.com/tdm/userlicense/1.0/},
	issn = {0022247X},
	doi = {10.1016/j.jmaa.2004.12.009},
	language = {en},
	number = {2},
	journal = {Journal of Mathematical Analysis and Applications},
	author = {Kowalczyk, R.},
	year = {2005},
	pages = {566--588},
}

@article{choi_prevention_2010,
	title = {Prevention of blow-up by fast diffusion in chemotaxis},
	volume = {362},
	copyright = {https://www.elsevier.com/tdm/userlicense/1.0/},
	issn = {0022247X},
	doi = {10.1016/j.jmaa.2009.08.012},
	language = {en},
	number = {2},
	journal = {Journal of Mathematical Analysis and Applications},
	author = {Choi, Y. and Wang, Z.},
	year = {2010},
	pages = {553--564},
}

@article{lushnikov_macroscopic_2008,
	title = {Macroscopic dynamics of biological cells interacting via chemotaxis and direct contact},
	volume = {78},
	copyright = {http://link.aps.org/licenses/aps-default-license},
	issn = {1539-3755, 1550-2376},
	doi = {10.1103/PhysRevE.78.061904},
	language = {en},
	number = {6},
	journal = {Physical Review E},
	author = {Lushnikov, P. M. and Chen, N. and Alber, M.},
	year = {2008},
	pages = {061904},
	file = {?????:C\:\\Users\\lenovo\\Zotero\\storage\\7IYCEN3L\\Lushnikov ? - 2008 - Macroscopic dynamics of biological cells interacting via chemotaxis and direct contact.pdf:application/pdf},
}

@article{wang_chemotaxis_2010,
	title = {On chemotaxis models with cell population interactions},
	volume = {5},
	issn = {0973-5348, 1760-6101},
	doi = {10.1051/mmnp/20105311},
	number = {3},
	journal = {Mathematical Modelling of Natural Phenomena},
	author = {Wang, Z. A.},
	year = {2010},
	pages = {173--190},
	file = {??:C\:\\Users\\lenovo\\Zotero\\storage\\YZYDS5ZQ\\Wang - 2010 - On chemotaxis models with cell population interactions.pdf:application/pdf},
}

@book{evans_partial_2022,
	address = {Providence, Rhode Island},
	edition = {Second edition},
	series = {Graduate studies in mathematics},
	title = {Partial differential equations},
	isbn = {978-1-4704-6942-9},
	language = {eng},
	number = {19},
	publisher = {American Mathematical Society},
	author = {Evans, L. C.},
	year = {2022},
}

@book{amann_linear_1995,
	address = {Basel},
	title = {Linear and quasilinear parabolic problems},
	copyright = {http://www.springer.com/tdm},
	isbn = {978-3-0348-9950-5 978-3-0348-9221-6},
	doi = {10.1007/978-3-0348-9221-6},
	language = {en},
	publisher = {Birkh{\"a}user Basel},
	author = {Amann, H.},
	year = {1995},
	file = {??:C\:\\Users\\lenovo\\Zotero\\storage\\CJTTTEA3\\Amann - 1995 - Linear and Quasilinear Parabolic Problems.pdf:application/pdf},
}

@book{cherniha_nonlinear_2017,
	address = {Cham},
	series = {Lecture {Notes} in {Mathematics}},
	title = {Nonlinear reaction-diffusion systems},
	volume = {2196},
	copyright = {http://www.springer.com/tdm},
	isbn = {978-3-319-65465-2 978-3-319-65467-6},
	doi = {10.1007/978-3-319-65467-6},
	publisher = {Springer International Publishing},
	author = {Cherniha, R. and Davydovych, V.},
	year = {2017},
}

@article{kondo_reaction-diffusion_2010,
	title = {Reaction-diffusion model as a framework for understanding biological pattern formation},
	volume = {329},
	issn = {0036-8075, 1095-9203},
	doi = {10.1126/science.1179047},
	language = {en},
	number = {5999},
	journal = {Science},
	author = {Kondo, S. and Miura, T.},
	year = {2010},
	pages = {1616--1620},
}

@article{painter_stripe_1999,
	title = {Stripe formation in juvenile \textit{{Pomacanthus}} explained by a generalized {Turing} mechanism with chemotaxis},
	volume = {96},
	issn = {0027-8424, 1091-6490},
	doi = {10.1073/pnas.96.10.5549},
	language = {en},
	number = {10},
	journal = {Proceedings of the National Academy of Sciences},
	author = {Painter, K. J. and Maini, P. K. and Othmer, H. G.},
	year = {1999},
	pages = {5549--5554},
	file = {??:C\:\\Users\\lenovo\\Zotero\\storage\\YCNF7BVA\\Painter ? - 1999 - Stripe formation in juvenile Pomacanthus explained by a generalized Turing mechanism with che.pdf:application/pdf},
}

@article{li_cell_1999,
	title = {Cell migration and chick limb development: chemotactic action of {FGF}-4 and the {AER}},
	volume = {211},
	copyright = {https://www.elsevier.com/tdm/userlicense/1.0/},
	issn = {00121606},
	shorttitle = {Cell migration and chick limb development},
	doi = {10.1006/dbio.1999.9317},
	language = {en},
	number = {2},
	journal = {Developmental Biology},
	author = {Li, S. and Muneoka, K.},
	year = {1999},
	pages = {335--347},
}

@article{yang_cell_2002,
	title = {Cell movement patterns during gastrulation in the chick are controlled by positive and negative chemotaxis mediated by {FGF4} and {FGF8}},
	volume = {3},
	issn = {15345807},
	doi = {10.1016/S1534-5807(02)00256-3},
	language = {en},
	number = {3},
	journal = {Developmental Cell},
	author = {Yang, X. and Dormann, Dirk and M{\"u}nsterberg, A. E. and Weijer, C. J.},
	year = {2002},
	pages = {425--437},
	file = {?????:C\:\\Users\\lenovo\\Zotero\\storage\\9L8BFRFQ\\Yang ? - 2002 - Cell movement patterns during gastrulation in the chick are controlled by positive and negative chem.pdf:application/pdf},
}

@article{budrene_complex_1991,
	title = {Complex patterns formed by motile cells of {Escherichia} coli},
	volume = {349},
	copyright = {http://www.springer.com/tdm},
	issn = {0028-0836, 1476-4687},
	doi = {10.1038/349630a0},
	language = {en},
	number = {6310},
	journal = {Nature},
	author = {Budrene, E. O. and Berg, H. C.},
	year = {1991},
	pages = {630--633},
}

@article{budrene_dynamics_1995,
	title = {Dynamics of formation of symmetrical patterns by chemotactic bacteria},
	volume = {376},
	copyright = {http://www.springer.com/tdm},
	issn = {0028-0836, 1476-4687},
	doi = {10.1038/376049a0},
	language = {en},
	number = {6535},
	journal = {Nature},
	author = {Budrene, E. O. and Berg, H. C.},
	year = {1995},
	pages = {49--53},
}

@article{painter_chemotactic_2000,
	title = {A chemotactic model for the advance and retreat of the primitive streak in avian development},
	volume = {62},
	issn = {00928240},
	url = {http://link.springer.com/10.1006/bulm.1999.0166},
	doi = {10.1006/bulm.1999.0166},
	language = {en},
	number = {3},
	journal = {Bulletin of Mathematical Biology},
	author = {Painter, K.J. and Maini, P. K. and Othmer, H.G.},
	month = may,
	year = {2000},
	pages = {501--525},
}

@article{wang_algebraicspectral_2026,
	title = {Algebraic{\textendash}spectral thresholds and discrete{\textendash}continuous stability transfer in {Leslie}{\textendash}{Gower} systems},
	volume = {34},
	issn = {2688-1594},
	doi = {10.3934/era.2026013},
	number = {1},
	journal = {Electronic Research Archive},
	author = {Wang, L. S. and Yu, J.},
	year = {2026},
	pages = {251--290},
}

@article{liang_global_2025,
	title = {Global well-posedness and stability of nonlocal damage-structured lineage model with feedback and dedifferentiation},
	volume = {13},
	issn = {2227-7390},
	doi = {10.3390/math13223583},
	language = {en},
	number = {22},
	journal = {Mathematics},
	author = {Liang, Y. and Wang, L. S. and Yu, J. and Liu, Z.},
	month = nov,
	year = {2025},
	pages = {3583},
}

@article{sahai_framework_2020,
	title = {A framework for advancing our understanding of cancer-associated fibroblasts},
	volume = {20},
	issn = {1474-175X, 1474-1768},
	doi = {10.1038/s41568-019-0238-1},
	language = {en},
	number = {3},
	journal = {Nature Reviews Cancer},
	author = {Sahai, E. and Astsaturov, I. and Cukierman, E. and DeNardo, D. G. and Egeblad, M. and Evans, R. M.},
	year = {2020},
	pages = {174--186},
	file = {??:C\:\\Users\\lenovo\\Zotero\\storage\\SQQBY6GQ\\Sahai ? - 2020 - A framework for advancing our understanding of cancer-associated fibroblasts.pdf:application/pdf},
}

@article{milne_role_2025,
	title = {The role of environmentally mediated drug resistance in facilitating the spatial distribution of residual disease},
	volume = {8},
	issn = {2399-3642},
	doi = {10.1038/s42003-025-08585-9},
	language = {en},
	number = {1},
	journal = {Communications Biology},
	author = {Milne, A. and Marusyk, A. and Maini, P. K. and Anderson, A. R. A. and Picco, N.},
	year = {2025},
	pages = {1189},
}

@article{meads_environment-mediated_2009,
	title = {Environment-mediated drug resistance: a major contributor to minimal residual disease},
	volume = {9},
	issn = {1474-175X, 1474-1768},
	shorttitle = {Environment-mediated drug resistance},
	doi = {10.1038/nrc2714},
	language = {en},
	number = {9},
	journal = {Nature Reviews Cancer},
	author = {Meads, M. B. and Gatenby, R. A. and Dalton, W. S.},
	year = {2009},
	pages = {665--674},
}

\end{document}